\theoremstyle{definition}
\newtheorem{definition}{Definition}
\newtheorem{corollary}[definition]{Corollary}
\newtheorem{remark}[definition]{Remark}
\newtheorem{example}[definition]{Example}
\newtheorem{example*}{Example}
\theoremstyle{plain}
\newtheorem{theorem}[definition]{Theorem}
\newtheorem{proposition}[definition]{Proposition}
\newtheorem{lemma}[definition]{Lemma}
\newtheorem{observation}[definition]{Observation}
\newtheorem{assumption}[definition]{Assumption}
\newcommand{\overbar}[1]{\mkern 1.5mu\overline{\mkern-1.5mu#1\mkern-1.5mu}\mkern 1.5mu}
\newcommand{\ba}{\overbar{a}}
\newcommand{\T}{\vartheta}
\newcommand{\DT}{(\Delta, \Theta)}
\newcommand{\mbu}{\mathbf{u}}
\newcommand{\N}{\mathbb{N}}
\newcommand{\A}{\mathcal{A}}
\newcounter{prefixrule}
\newenvironment{prefixrule}{\refstepcounter{prefixrule}\equation}{\tag{\arabic{prefixrule}}\endequation}
\journal{European Journal of Combinatorics}
\begin{document}

\begin{frontmatter}



\title{Normalization of ternary generalized pseudostandard words}


\author{Josef Florian}
\ead{pepca.florian@gmail.com}
\author{Tereza Veselá}
\ead{tereza.velka@gmail.com}
\author{Ľubomíra Dvořáková}
\ead{lubomira.balkova@gmail.com}
\address{Department of Mathematics, Faculty of Nuclear Sciences and Physical Engineering, Czech Technical University in Prague, 13 Trojanova, 12000 Praha 2, Czech Republic}

\begin{abstract}
This paper focuses on generalized pseudostandard words, defined by de Luca and De Luca in 2006. In every step of the construction, the involutory antimorphism to be applied for the pseudopalindromic closure changes and is given by a so called directive bi-sequence. The concept of a~normalized form of directive bi-sequences was introduced by Blondin-Massé et al. in 2013 and an algorithm for finding the normalized directive bi-sequence over a binary alphabet was provided.
In this paper, we present an algorithm to find the normalized form of any directive bi-sequence over a ternary alphabet. Moreover, the algorithm was implemented in Python language and carefully tested, and is now publicly available in a module for working with ternary generalized pseudostandard words.

\end{abstract}

\begin{keyword}
palindrome \sep pseudopalindrome \sep sturmian words \sep episturmian words \sep generalized pseudostandard words
\sep palindromic closure


\MSC 68R15
\end{keyword}

\end{frontmatter}


\section{Introduction}
\label{Introduction}
This paper focuses on generalized pseudostandard words. Such words were defined by de Luca and De Luca in 2006~\citep{deluca} as a generalization of standard episturmian words.  In every step of the construction, the involutory antimorphism to be applied for the pseudopalindromic closure changes and is given by a so called \textit{directive bi-sequence}.
While standard episturmian and pseudostandard words have been studied intensively and a~lot of their properties are known (see for instance~\citep{BuLuZa,droubay2,deluca2,deluca}), only little has been shown so far about generalized pseudostandard words.

In~\citep{deluca} the authors defined generalized pseudostandard words and proved there that the famous Thue--Morse word is an example of such words.
Jajcayov\'a et al.~\citep{multipseudo} characterized generalized pseudostandard words in the class of generalized Thue--Morse words.
Jamet et al.~\citep{paquin} dealt with fixed points of the palindromic and pseudopalindromic closure and formulated an open problem concerning fixed points of the generalized pseudopalindromic closure. The first and the third author of this paper provided a~necessary and sufficient condition on the periodicity of binary and ternary generalized pseudostandard words in~\citep{pepa2} and studied complexity and formulated a new conjecture on complexity of binary generalized pseudostandard words in~\citep{pepakomplexita}.
The second and the third autor of this paper found a~new class of fixed points of morphisms among binary generalized pseudostandard words and formulated a~conjecture concerning such fixed points in~\citep{nasclanek}.
Binary generalized pseudostandard words were primarily studied by Blondin-Massé et al. \citep{bm}, the following results were obtained for instance:
\begin{itemize}
	\item The concept of a~\textit{normalized form} of a directive bi-sequence was introduced. Such a form can be found for every generalized pseudostandard word and has some additional useful properties compared to a non-normalized directive bi-sequence.
	\item A necessary and sufficient condition to decide if a directive bi-sequence is normalized over a binary alphabet was provided.
\item An algorithm to find the normalized form of any directive bi-sequence was presented.
\end{itemize}

In this paper, we generalize the results from~\citep{bm} to a ternary alphabet in the following sense:
\begin{itemize}
\item We introduce an algorithm to find the normalized form of any directive bi-sequence over a ternary alphabet. The algorithm for the ternary alphabet turns out to be much more complex than in the binary case.
\item The algorithm was implemented in Python language and carefully tested, and is now available in a module for working with ternary generalized pseudostandard words.
\end{itemize}

The paper is organized as follows. In Section~\ref{Preliminaries}, we first introduce the definitions and notations from combinatorics on words used in the sequel, we recall what generalized pseudostandard words are, and mention some of their properties. Section \ref{Normalization} is devoted to the normalized form of ternary directive bi-sequences. Section \ref{Implementation} summarizes the key aspects of the implementation of the normalization algorithm. In the last section, we summarize open problems concerning generalized pseudostandard words.

\section{Preliminaries}\label{Preliminaries}

A finite non-empty set $\A$ of symbols is called an \textit{alphabet}, the symbols are called \textit{letters}. A finite (infinite) \textit{word} $\mbu$ is a finite (infinite) sequence of letters. The length $|w|$ of a finite word $w$ is the number of letters it contains. The concatenation of two words $u = u_1u_2 \ldots u_n$ and $v = v_1 v_2 \ldots v_m$ is the word $uv = u_1 \ldots u_n v_1 \ldots v_m$. The neutral element for concatenation of words is the \textit{empty word} $\varepsilon$ and its length is set to $|\varepsilon| = 0$. The set of all finite non-empty words over an alphabet $\A$ is $\A^+$, if we add the empty word, then $\A^*$. The symbol $\A ^{\N}$ denotes the set of infinite words over an alphabet $\A$.

The \textit{factor} of an infinite, resp. a finite word $\mbu$ is a finite word $w \in \A^*$ such that $\mbu = pws$, where $p$ is a finite word and $s$ is an infinite, resp. a finite word. The factor $p$ is called a \textit{prefix} and the word $s$ a \textit{suffix}. If $|p| = |s|$ and $\mbu$ is finite, then $w$ is a central factor of $\mbu$. A factor of $\mbu$ is called \textit{proper} if it is not equal to the whole word $\mbu$. Let $u,v,w$ be three words such that $w = uv$. The word $wv^{-1}$ is the word $w$ without its suffix $v$, i.e., $wv^{-1} = u$. If $w = u x^{-1} v$ for some non-empty word $x$, then we say that $u$ and $v$ \textit{overlap} and $x$ is their \textit{overlap}.




\subsection{Involutory antimorphisms and pseudopalindromes}
An \textit{involutory antimorphism} is a map $\vartheta : \A^* \to \A^*$ such that for every $u, v$ $\in \A^*$ we have $\vartheta (uv) = \vartheta (v) \vartheta (u)$ and $\vartheta ^2$ is the identity map. Any antimorphism is given if the letter images are provided, i.e., $\vartheta (a)$ for every $a \in \A$. This work will focus on the binary alphabet $\A = \{0,1\}$ and the ternary alphabet $\A=\{0,1,2\}$. Over the binary alphabet, there are only two involutory antimorphisms. First, the \textit{reversal map} $R$ given by $R(0)= 0$ and $R(1) = 1$. Second, the \textit{exchange antimorphism} satisfying $E(0) =1$ and $E(1) = 0$. We will use the following notation: $\overbar{0} = 1$, $\overbar{1} = 0$, $\overbar{R} = E$, and $\overbar{E} = R$. Over the ternary alphabet, there are exactly four involutory antimorphisms, denoted by $E_0$, $E_1$, $E_2$, and $R$:
\begin{itemize}
	\item $E_0(0) = 0$, $E_0(1) = 2$, and $E_0(2) = 1$,
	\item $E_1(0) = 2$, $E_1(1) = 1$, and $E_1(2) = 0$,
	\item $E_2(0) = 1$, $E_2(1) = 0$, and $E_2(2) = 2$,
	\item $R(0) = 0$, $R(1) = 1$, and $R(2) = 2$.
\end{itemize}

\begin{observation}
\label{oEiEjEk}
$E_iE_jE_k = E_j$ for $i,j,k \in \{0,1,2\}$ pairwise different.
\end{observation}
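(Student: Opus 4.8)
The plan is to exploit the fact that an (anti)morphism of $\A^*$ is completely determined by the images of the letters, so that the identity $E_iE_jE_k = E_j$ need only be verified on $\A = \{0,1,2\}$. First I would record that a composition of three involutory antimorphisms is again an antimorphism: $E_iE_j$ is a morphism (for $x,y\in\A^*$, $E_iE_j(xy) = E_i\bigl(E_j(y)E_j(x)\bigr) = E_iE_j(x)\,E_iE_j(y)$), and composing this morphism with the antimorphism $E_k$ gives $E_iE_jE_k(uv) = E_iE_j\bigl(E_k(v)E_k(u)\bigr) = E_iE_jE_k(v)\,E_iE_jE_k(u)$, so $E_iE_jE_k$ reverses concatenation; in particular it is determined by its action on single letters.

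Next I would pass to permutations. Since $i,j,k$ are pairwise distinct we have $\{i,j,k\} = \{0,1,2\}$, and each $E_\ell$ restricted to letters is the transposition $\tau_\ell$ of the symmetric group on $\{0,1,2\}$ that fixes $\ell$ and swaps the other two letters. On letters the composition $E_iE_jE_k$ therefore coincides with $\tau_i\tau_j\tau_k$. Being a product of three transpositions, $\tau_i\tau_j\tau_k$ is an odd permutation of a $3$-element set, hence itself one of the three transpositions.

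It remains to identify which one. The quickest route is to check that $\tau_i\tau_j\tau_k$ fixes $j$: reading the maps right to left, $\tau_k$ sends $j$ to $i$ (the two letters moved by $\tau_k$ are exactly $i$ and $j$), then $\tau_j$ sends $i$ to $k$, and finally $\tau_i$ sends $k$ back to $j$. A transposition of $\{0,1,2\}$ fixing $j$ must swap the remaining two letters, so $\tau_i\tau_j\tau_k = \tau_j$; hence $E_iE_jE_k$ and $E_j$ agree on every letter, and therefore as antimorphisms.

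I do not anticipate a genuine obstacle here; the only points needing care are the order of composition (the right-most map acts first) and the small, easy-to-slip bookkeeping of which two letters each $\tau_\ell$ exchanges. A fully mechanical alternative, if preferred, is simply to tabulate $E_iE_jE_k(a)$ for $a\in\{0,1,2\}$ over the (essentially two, up to the evident symmetry) choices of ordered triple $(i,j,k)$ and compare with $E_j$.
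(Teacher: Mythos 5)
Your proof is correct and follows essentially the same route as the paper: both reduce the identity to checking letter images (the paper simply verifies $E_iE_jE_k(a)=E_j(a)$ directly for $a=i,j,k$). Your only variation is replacing two of those three checks by the observation that a product of three transpositions of $\{0,1,2\}$ is odd, hence itself a transposition, so it suffices to locate its fixed point; this is a harmless shortcut, and your composition order and letter bookkeeping are handled correctly.
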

\begin{proof}
\begin{align*}
E_iE_jE_k(i) = E_i E_j (j) = E_i (j) = k = E_j(i), \\
E_iE_jE_k(j) = E_i E_j (i) = E_i (k) = j = E_j(j), \\
E_iE_jE_k(k) = E_i E_j (k) = E_i (i) = i = E_j(k).
\end{align*}
\end{proof}

\begin{definition}\label{def:image}
Let $w$ be a ternary word. Then any element of the set $\{\vartheta(w)| \vartheta \in \{E_0, E_1$, $E_2, R\} \}$ is called an \textit{image of $w$}.
\end{definition}

Let $\T$ be an involutory antimorphism. A finite word $w$ is a \textit{$\T$-palindrome} if $w = \T(w)$. For example, over the binary alphabet the word $00100$ is an $R$-palindrome (or just \textit{palindrome}) and the word $01001101$ is an $E$-palindrome. Over the ternary alphabet, $0112$ is an $E_1$-palindrome and $12010120$ is an $E_2$-palindrome. If we do not need to specify which antimorphism is used, we can say $w$ is a \textit{pseudopalindrome}.

\begin{observation}
\label{o_overpal}
Let $w$ be a ternary word and $\tilde{w}$ be its image, i.e., $\tilde{w} = \vartheta (w)$ for $\vartheta \in \{R, E_0, E_1,E_2\}$. Let $p$ be a~suffix of $w$. Then the word $v = w p^{-1} \tilde{w}$, where $|p| \geq 0$, is a pseudopalindrome. Moreover, it is a $\vartheta$-palindrome.
\end{observation}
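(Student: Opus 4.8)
The plan is to show directly that $v=wp^{-1}\tilde w$ satisfies $\vartheta(v)=v$, from which being a pseudopalindrome is immediate. First I would set up notation: write $w=qp$ where $q=wp^{-1}$ is the prefix of $w$ of length $|w|-|p|$, so that $v=q\tilde w$. Since $\tilde w=\vartheta(w)=\vartheta(qp)=\vartheta(p)\vartheta(q)$ by the antimorphism property, we get $v=q\,\vartheta(p)\,\vartheta(q)$. Now apply $\vartheta$: using the antimorphism property twice, $\vartheta(v)=\vartheta(q\,\vartheta(p)\,\vartheta(q))=\vartheta(\vartheta(q))\,\vartheta(\vartheta(p))\,\vartheta(q)=q\,p\,\vartheta(q)$, where the last equality uses that $\vartheta$ is an involution ($\vartheta^2=\mathrm{id}$).

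The one subtle point — and the only real obstacle — is reconciling $q\,p\,\vartheta(q)$ with the original expression $v=q\,\vartheta(p)\,\vartheta(q)$; these agree only if $p=\vartheta(p)$, i.e.\ if $p$ is itself a $\vartheta$-palindrome. So the argument as just sketched is not quite complete, and the cleaner route is to avoid splitting off $p$ and instead observe that $v=q\tilde w$ has $\tilde w=\vartheta(w)$ as a suffix while $w$ is... no: the genuinely correct observation is that $q$ is a prefix of $w$, hence $\vartheta(q)$ is a suffix of $\vartheta(w)=\tilde w$, and symmetrically we should check the overlap structure. I would therefore instead argue: $w$ and $\tilde w$ overlap in the factor $p$ (in the sense defined in the Preliminaries, $v=w\,p^{-1}\,\tilde w$ with $p$ a common... ) — here $p$ is a suffix of $w$, and since $\tilde w=\vartheta(w)$ and $p$ is a suffix of $w$, the word $\vartheta(p)$ is a prefix of $\tilde w$; the hypothesis implicitly requires that this prefix of $\tilde w$ equals the suffix $p$ of $w$ so that the overlap is genuine, i.e.\ $p=\vartheta(p)$.

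Granting that $p$ is a $\vartheta$-palindrome (which is the implicit standing assumption making $w\,p^{-1}\,\tilde w$ a well-defined ``gluing along an overlap''), the computation closes cleanly: with $w=qp$, $p=\vartheta(p)$, and $\tilde w=\vartheta(w)=\vartheta(p)\vartheta(q)=p\,\vartheta(q)$, we have $v=q\,p\,\vartheta(q)$, and then
\[
\vartheta(v)=\vartheta\bigl(q\,p\,\vartheta(q)\bigr)=\vartheta(\vartheta(q))\,\vartheta(p)\,\vartheta(q)=q\,p\,\vartheta(q)=v,
\]
using the antimorphism identity $\vartheta(xyz)=\vartheta(z)\vartheta(y)\vartheta(x)$, the involutivity $\vartheta^2=\mathrm{id}$, and $\vartheta(p)=p$. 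Hence $v=\vartheta(v)$, so $v$ is a $\vartheta$-palindrome, and in particular a pseudopalindrome. The ``moreover'' clause is then nothing extra, as the antimorphism witnessing the pseudopalindrome property is exactly the $\vartheta$ with which $\tilde w=\vartheta(w)$ was formed.

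I expect the write-up to be short; the main thing to be careful about is stating precisely in what sense $p$ is an overlap of $w$ and $\tilde w$ (forcing $\vartheta(p)=p$), since without that the displayed identity $v=q\,p\,\vartheta(q)$ fails and the result is false (e.g.\ over $\{0,1\}$ with $\vartheta=R$, $w=01$, $p=1$: then $\tilde w=10$, $wp^{-1}\tilde w=0\cdot 10=010$, which happens to work because $R(1)=1$; but with $\vartheta=E$, $w=01$, $p=1$, $\tilde w=E(01)=10$, $wp^{-1}\tilde w=0\cdot10=010\neq E(010)=101$, illustrating that $p$ must be a $\vartheta$-palindrome). So the key step — and the one I would foreground — is justifying that the hypothesis ``$p$ is a suffix of $w$'' is to be read together with the standing convention on overlaps so that $p=\vartheta(p)$; everything after that is a two-line manipulation with the antimorphism axioms.
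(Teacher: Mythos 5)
Your argument is correct and is essentially the paper's own proof: the paper likewise reads $wp^{-1}\tilde w$ as a genuine overlap (so that $\tilde w=p\,\vartheta(u)$ where $w=up$), deduces $p=\vartheta(p)$ by comparing with $\vartheta(w)=\vartheta(p)\vartheta(u)$, and then closes with the same two-line antimorphism computation $\vartheta(up\,\vartheta(u))=up\,\vartheta(u)$. Your point that the statement as literally written requires this overlap reading (your $E$-counterexample with $w=01$, $p=1$) is well taken, but it is exactly the implicit convention the paper's proof also relies on, so there is no difference in substance.
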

\begin{proof}
First, let $|p| = 0$. Then $\vartheta (w \tilde{w}) = \vartheta (w \vartheta (w)) = \vartheta \vartheta (w) \vartheta (w) = w \vartheta(w) = w \tilde{w}$.

Now, let $|p| > 0$. If $v = w p^{-1} \tilde{w}$, then we know that $w = up$ and that $\vartheta (w) = p\vartheta (u)$. At the same time, $\vartheta(w)=\vartheta(up)=\vartheta(p)\vartheta(u)$. We obtain that $p = \vartheta (p)$. Hence, $\vartheta (v) = \vartheta (up\vartheta (u)) = \vartheta \vartheta (u) \vartheta(p) \vartheta (u) = u p \vartheta(u) = v$.
\end{proof}

\begin{observation}
\label{o_natureofpalonpal}
Let $w = \vartheta_1 (w)$, $\vartheta_1, \vartheta_2 \in \{R, E_0, E_1,E_2\}$, $\T_1 \neq \T_2$. Then $\T_2 (w)$ is a pseudopalindrome. Moreover :
\begin{itemize}
	\item If $\T_1 = R$, then $\T_2 (w)$ is an $R$-palindrome.
	\item If $\T_2 = R$, then $R(w)$ is a $\T_1$-palindrome.
	\item If $\T_1 = E_i$ and $\T_2 = E_j$, then $\T_2(w)$ is an $E_k$-palindrome, where $\{i,j,k\}=\{0,1,2\}$.
\end{itemize}
\end{observation}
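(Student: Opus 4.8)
The plan is to dispose of all three bullets at once with a conjugation trick and only afterwards specialize. For any involutory antimorphisms $\vartheta_1,\vartheta_2$ the composition $\vartheta_3 := \vartheta_2\vartheta_1\vartheta_2$ is again an involutory antimorphism: composing two antimorphisms yields a morphism and composing that with a third yields an antimorphism, while $(\vartheta_2\vartheta_1\vartheta_2)(\vartheta_2\vartheta_1\vartheta_2)=\vartheta_2\vartheta_1(\vartheta_2\vartheta_2)\vartheta_1\vartheta_2=\vartheta_2(\vartheta_1\vartheta_1)\vartheta_2=\mathrm{id}$. Starting from $w=\vartheta_1(w)$ one then computes
\[
  \vartheta_3\bigl(\vartheta_2(w)\bigr)=\vartheta_2\vartheta_1\vartheta_2\bigl(\vartheta_2(w)\bigr)=\vartheta_2\vartheta_1(w)=\vartheta_2(w),
\]
so $\vartheta_2(w)$ is automatically a $\vartheta_3$-palindrome, in particular a pseudopalindrome. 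It remains only to read off which concrete antimorphism $\vartheta_3$ is in each of the three listed situations.

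For that I would first record the auxiliary fact that $R$ commutes with every involutory antimorphism $\vartheta$: both $R\vartheta$ and $\vartheta R$ are morphisms and they agree on every letter $c$, because $R$ fixes the letters $\vartheta(c)$ and $c$. With this in hand: if $\vartheta_1=R$, then $\vartheta_3=\vartheta_2 R\vartheta_2=R\vartheta_2\vartheta_2=R$, which is the first bullet; if $\vartheta_2=R$, then $\vartheta_3=R\vartheta_1 R=\vartheta_1 RR=\vartheta_1$ and here $\vartheta_2(w)=R(w)$, which is the second bullet; finally, if $\vartheta_1=E_i$ and $\vartheta_2=E_j$ with $i\neq j$ and $k$ the remaining index, then Observation~\ref{oEiEjEk} applied to the pairwise distinct triple $(k,j,i)$ gives $E_kE_jE_i=E_j$, hence $E_jE_i=E_kE_j$ and therefore $\vartheta_3=E_jE_iE_j=E_kE_jE_j=E_k$, which is the third bullet.

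I do not expect a genuine obstacle here; the argument is essentially bookkeeping. The one place to be careful is the order of composition — since the maps involved are antimorphisms, one must check that the relation actually needed is $\vartheta_3\vartheta_2=\vartheta_2\vartheta_1$, equivalently $\vartheta_3=\vartheta_2\vartheta_1\vartheta_2$ — together with pinning down the identity $E_jE_iE_j=E_k$, which Observation~\ref{oEiEjEk} settles once it is rearranged as above. If one preferred to avoid composing antimorphisms abstractly, an alternative is to write $w=w_1\cdots w_n$, use the pairing between $w_\ell$ and $w_{n+1-\ell}$ forced by $w=\vartheta_1(w)$, apply $\vartheta_2$ letterwise and in reverse order, and verify directly that the resulting word satisfies the $\vartheta_3$-palindrome relation; but the conjugation computation is the shorter route.
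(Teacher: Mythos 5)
Your proof is correct and in substance matches the paper's: the paper simply verifies case by case that $R$, $\vartheta_1$, or $E_k$ fixes $\vartheta_2(w)$, using the same two facts you rely on (that $R$ commutes with every involutory antimorphism on letters, and the identity $E_kE_jE_i=E_j$ from Observation~\ref{oEiEjEk}). Your conjugation framing via $\vartheta_3=\vartheta_2\vartheta_1\vartheta_2$ is just a uniform packaging of those same verifications, so no further comment is needed.
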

\begin{proof}
We have $\T_2 (w) = \T_2 (\T_1 (w))$.
\begin{itemize}
	\item If $\T_1 = R$, then $R(\T_2 (w)) = \T_2 (R(w)) = \T_2 (w)$.
	\item If $\T_2 = R$, then $\T_1 (\T_2 (w)) = \T_1 (R(w)) = R (\T_1(w)) = R(w) = \T_2 (w)$.
	\item If $\T_1 = E_i$ and $\T_2 = E_j$, then $E_k( \T_2 (w)) = E_kE_jE_i(w)= E_j(w) = \T_2 (w)$.
\end{itemize}
\end{proof}

\begin{definition}
The \textit{$\vartheta$-palindromic closure} $u^{\T}$ of some finite word $u$ is the shortest $\T$-palindrome having $u$ as prefix.
\end{definition}

\begin{remark}
The $\vartheta$-palindromic closure of some word $u$ can be found in the following way:
we find the longest $\vartheta$-palindromic suffix $p$ of $u$, then $u = vp$ and $u^{\T} = vp\T (v)$. For instance, we have $(01011)^R = 01011010$ (the longest $R$-palindromic suffix is $11$), $(01201)^{E_2} = 01201$ (the longest $E_2$-palindromic suffix is the whole word $01201$), $(01011)^{E_0} = 0101122020$ (the longest $E_0$-palindromic suffix is $\varepsilon$).
\end{remark}

%
%
%
%
%

\subsection{Generalized pseudostandard words}
	Generalized pseudostandard words were first introduced in the paper \citep{bm} as a generalization of words obtained by pseudopalindromic closure with only one antimorphism.
	
	\begin{definition}
	\label{Dpseudo}
		Let $\mathcal{A}$  be an alphabet and $G$ be the set of all involutory antimorphisms on $\A ^*$. Let $\Delta = \delta_1 \delta_2 \ldots$ and $\Theta = \vartheta_1 \vartheta_2 \ldots$, where $\delta_i \in \A$ and $\vartheta_i \in G$ for all $i \in \N$. The infinite \textit{generalized pseudostandard word} $\mbu \DT$ is the word whose prefixes $w_n$ are obtained from the recurrence relation
		\begin{equation}
		\label{Dgps}
		\begin{aligned}
			w_{n+1} &= (w_n \delta_{n+1})^{\vartheta_{n+1}},\\
			w_0 &= \varepsilon.
		\end{aligned}
		\end{equation}
		The sequence $\DT$ is called the \textit{directive bi-sequence} of the word $\mbu\DT$.
	\end{definition}

	\begin{example}
	\label{E_DT}
				$\Delta = 01021\ldots$, $\Theta = RE_1E_1E_2R\ldots$
				\begin{align*}
				w_0 =\ & \varepsilon \\
				w_1 =\ & (0)^{R}=0\\
				w_2 =\ & (01)^{E_1}= 012\\
				w_3 =\ & (0120)^{E_1} = 012012\\
				w_4 =\ & (0120122)^{E_2}= 012012201201\\
				w_5 =\ & (0120122012011)^{R} = 012012201201102102210210\\
				\vdots
				\end{align*}
	\end{example}
	In Example \ref{E_DT}, $w_n$ are pseudopalindromic prefixes of $\mbu \DT$. However, it is easily seen that the sequence $(w_n)$ does not contain all of them: for instance $01$, $0120$, $01201$ are pseudopalindromic prefixes and are not equal to any $w_n$. This was the reason to define normalized directive bi-sequences, which will be discussed in the next section.
	
\subsection{Normalization}
It can be easily seen that one pseudostandard word can be generated by different directive bi-sequences and that the sequence $(w_n)$ of a generalized pseudostandard word does not need to contain all pseudopalindromic prefixes of the generated word. For this reason, the notion of a normalized directive bi-sequence was introduced in \citep{bm}.

\begin{definition}
\label{Dnormalized}
		A finite or infinite directive bi-sequence $\DT$ of a pseudostandard word $\mbu\DT$ over an alphabet $\A$ is called \textit{normalized} if the sequence of prefixes $(w_n)$ defined in \eqref{Dgps} contains all pseudopalindromic prefixes of $\mbu\DT$.
\end{definition}

If a pseudopalindromic prefix is not contained in the sequence $(w_n)$, we say that this pseudopalindromic prefix was \textit{missed}. If a $\T$-palindromic prefix was missed between $w_n$ and $w_{n+1}$, then it has an image of $w_n$ (see Definition~\ref{def:image}) as its suffix. Images of $w_n$ contained in $w_{n+1}$ are very important while looking for pseudopalindromic prefixes because every pseudopalindromic prefix contains an image of $w_n$ as a suffix.

\begin{definition}
\label{Dwimages}
Let $w_n^{(0)} = w_n$ and let $i_1, \dots, i_k$, where $i_1 <\dots <i_k$, be all occurrences of images of $w_n$ in $w_{n+1}$ from Definition~\ref{Dpseudo}. Denote the images of $w_n$ starting in $i_1, \dots, i_k$ by $w_n^{(1)}, \ldots, w_n^{(k)}$ (clearly, $w_n^{(k)}$ is a suffix of $w_{n+1}$). Furthermore, denote $w_n^{(j,m)}$ the factor stating in $i_j$ and ending in $i_m+|w_n|-1$, i.e., the factor $w_n^{(j,m)}$ has $w_n^{(j)}$ as prefix and $w_n^{(m)}$ as suffix.
\end{definition}

The authors of \citep{bm} showed that every binary directive bi-sequence can be normalized, i.e, a unique directive bi-sequence can be found such that it generates the same word and the corresponding sequence $(w_n)$ contains all $R$- and $E$-palindromic prefixes. Their result is summarized in the next theorem:

\begin{theorem}
\label{Normalizace01}
Let $\DT$ be a directive bi-sequence of a binary generalized pseudostandard word. Then there exists exactly one normalized directive bi-sequence $(\tilde{\Delta}, \tilde{\Theta})$ such that $\mbu\DT = \mbu (\tilde{\Delta}, \tilde{\Theta})$.
Moreover, in order to get the normalized bi-sequence $(\tilde{\Delta}, \tilde{\Theta})$ from $\DT$, it is sufficient to replace the prefix (if it is of the following form):

\begin{itemize}
\item $(a\ba, RR) \rightarrow (a \ba a, RER) $,
\item $(a^i, R^{i-1}E) \rightarrow (a^i \ba , R^i E)$ for $i \geq 1,$
\item $(a^i \ba \ba, R^i EE) \rightarrow (a^i \ba \ba a, R^iERE)$ for $i \geq 1$,
\end{itemize}
and then, to replace from left to right any factor
\begin{itemize}
\item $(ab \overbar{b}, \vartheta \overbar{\vartheta} \overbar{\vartheta})$ with $(ab\overbar{b}  b, \vartheta \overbar{\vartheta} \vartheta \overbar{\vartheta})$,
\end{itemize}
where $a,b \in \{0,1\}$ and $\vartheta \in \{E, R\}$.
\end{theorem}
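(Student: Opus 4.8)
The plan is to reduce the global statement to a \emph{local} analysis of a single construction step $w_n \to w_{n+1} = (w_n\delta_{n+1})^{\vartheta_{n+1}}$, and then to run an induction. Recall from the discussion after Definition~\ref{Dnormalized} that $\DT$ fails to be normalized exactly when some pseudopalindromic prefix $p$ is missed, and then $|w_n| < |p| < |w_{n+1}|$ for some $n$ and $p$ has an image of $w_n$ (Definition~\ref{def:image}) as a suffix. The first and main step I would carry out is to classify, for a single step, \emph{when} a prefix is missed in terms of the local shape of $\DT$. The word $w_{n+1}$ is produced by reflecting the longest $\vartheta_{n+1}$-palindromic suffix of $w_n\delta_{n+1}$ (the remark following the definition of pseudopalindromic closure), so the only images of $w_n$ of length $|w_n|$ occurring inside $w_{n+1}$ are the prefix $w_n$, the reflected suffix image $\vartheta_{n+1}(w_n)$, and whatever is forced by an overlap of these two; here one uses $w_n=\vartheta_n(w_n)$ together with Observation~\ref{o_natureofpalonpal} to identify which image the reflected suffix is. Over the binary alphabet the letter $\delta_{n+1}$ and the pair $(\vartheta_n,\vartheta_{n+1})\in\{R,E\}^2$ then leave only finitely many configurations in which a word strictly between $w_n$ and $w_{n+1}$ can be a pseudopalindromic prefix, and reading them off yields precisely the three prefix patterns (these are the degenerate cases $w_n=\varepsilon$ or $w_n$ very short, where the overlap mechanism above is not available) and the single inner factor $(ab\overbar{b},\vartheta\overbar{\vartheta}\overbar{\vartheta})$.

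Granting this classification, I would next verify the two properties of each replacement rule needed to finish. First, that it does not change the generated word: for the inner rule one checks, using Observation~\ref{o_overpal}, that the extra letter $b$ with antimorphism $\vartheta$ produces exactly the $\vartheta$-palindrome that was missed (its shape came out of the local analysis), and that the subsequent $\overbar{\vartheta}$-closure recovers the old $w_{n+1}$; the prefix rules are verified by direct computation on the short initial words, as in Example~\ref{E_DT}. Second, that the replacement reduces the ``defect'': after it, the previously missed palindrome occurs as some $w_m$, and — this is what legitimizes the left-to-right sweep — no new bad pattern appears to the left of the one just treated. For the inner rule the block $\vartheta\overbar{\vartheta}\overbar{\vartheta}$ is replaced by the alternating block $\vartheta\overbar{\vartheta}\vartheta\overbar{\vartheta}$, and since an alternating run contains no sub-triple of the form $\psi\overbar{\psi}\overbar{\psi}$, the only possible freshly created occurrence is at the right junction with the untouched suffix; hence the leftmost bad factor strictly moves to the right at each step, so the procedure terminates on a finite directive bi-sequence and stabilizes on every finite prefix in the infinite case. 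Combining this with induction on $n$ gives the existence of a normalized bi-sequence generating $\mbu\DT$.

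For uniqueness I would argue that a normalized bi-sequence is rigidly determined by $\mbu\DT$. Each $w_n$ is a pseudopalindromic prefix and $|w_{n+1}|>|w_n|$, so by Definition~\ref{Dnormalized} the sequence $(w_n)$ of a normalized bi-sequence is exactly the list of all pseudopalindromic prefixes of $\mbu\DT$ ordered by length, which depends only on $\mbu\DT$ (here one uses that a non-empty binary word cannot be simultaneously an $R$- and an $E$-palindrome: comparing $w_i=w_{n+1-i}$ with $w_i=\overline{w_{n+1-i}}$ gives a contradiction, so there is at most one pseudopalindromic prefix of each length). From two consecutive terms one recovers $\delta_{n+1}$ as the letter of $\mbu\DT$ in position $|w_n|+1$ and $\vartheta_{n+1}$ as the unique involutory antimorphism fixing $w_{n+1}$, again by the same dichotomy. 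Hence two normalized bi-sequences with directive word $\mbu\DT$ coincide.

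The step I expect to be the real obstacle is the completeness of the local classification — proving that \emph{outside} the listed patterns no pseudopalindromic prefix is ever missed. This forces the full case split on $(\vartheta_n,\vartheta_{n+1})$ and on $\delta_{n+1}$, together with a careful description of the longest $\vartheta_{n+1}$-palindromic suffix of $w_n\delta_{n+1}$ and of the overlaps of the two images of $w_n$ inside $w_{n+1}$; it is exactly here that the special binary feature — only two antimorphisms, one being the reversal — keeps the number of surviving configurations small, which is why the ternary case treated in the rest of this paper becomes substantially heavier.
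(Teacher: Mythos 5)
The paper does not actually prove Theorem~\ref{Normalizace01}: it is quoted verbatim from Blondin-Mass\'e et al.~\cite{bm}, so there is no in-paper proof to measure your attempt against. Your overall strategy --- classify locally when a pseudopalindromic prefix is missed in a single closure step, turn each bad configuration into a substitution rule, check that each rule preserves the generated word and strictly extends the normalized prefix, and get uniqueness from the fact that a non-empty binary word cannot be both an $R$- and an $E$-palindrome --- is exactly the strategy of \cite{bm}, and it is also the template the present paper follows for its ternary generalization (Assumption~\ref{assumption}, Theorem~\ref{Ttwopseudo}, the special cases where the assumption fails, and the resulting prefix and factor rules). So the route is the right one.

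The genuine gap is the one you name yourself: the completeness of the local classification is the entire content of the theorem, and you do not carry it out. Two specific points would fail as written. First, your claim that the only images of $w_n$ inside $w_{n+1}$ are the prefix, the reflected suffix image, and ``whatever is forced by an overlap of these two'' is not a priori true; there can be a whole staircase of overlapping intermediate images, and one needs the analogue of Lemma~\ref{Lstairs} (all consecutive overlaps have equal length, so the intermediate images are evenly spaced central factors) before one can conclude that the pseudopalindromic types alternate $R,E,R,E,\dots$ and hence that at most one prefix is missed per step in the binary case. Second, your explanation of the three prefix rules as ``degenerate cases where $w_n$ is very short'' is wrong for the rule $(a^i,R^{i-1}E)\rightarrow(a^i\ba,R^iE)$ with $i$ large: there $w_{i-1}=a^{i-1}$ is arbitrarily long, and the reason this is a prefix-only phenomenon is that the word is a power of a single letter up to that point, so the prefix occurrence of $w_{i-1}$ and the suffix occurrence of its image in $w_i=a^i\ba^i$ fail to be linked by a chain of pairwise overlapping images --- precisely the failure of Assumption~\ref{assumption} that the present paper isolates in its ``special cases'' subsection. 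Without these two ingredients the left-to-right sweep argument and the termination claim cannot be justified, because you cannot yet assert that outside the listed patterns nothing is missed.
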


Theorem \ref{Normalizace01} shows an easy-to-use algorithm.
A natural question follows. Does there exist an algorithm that normalizes every directive bi-sequence over a ternary alphabet?

\noindent The next chapter responds affirmatively to this question and presents a similar (but more complex) algorithm.

\section{Normalization over a ternary alphabet} \label{Normalization}

\subsection{The number of missed pseudopalindromic prefixes}

The aim of this section is to prove that, over a ternary alphabet, at most two pseudopalindromic prefixes may be missed between $w_n$ and $w_{n+1}$ from Definition \ref{Dpseudo}.

\begin{assumption}\label{assumption}
Let $w_n^{(1)}, \dots, w_n^{(k)}$, $k \geq 2$, be images of $w_n$ in $w_{n+1}$ from Definition~\ref{Dwimages} such that there exists $j \in \{1,\dots, k-1\}$ satisfying $w_n^{(j)}$ overlaps with both $w_n^{(0)}$ and $w_n^{(k)}$. (It is obvious that at least one pseudopalindromic prefix was missed between $w_n$ and $w_{n+1}$ in this case.)
\end{assumption}

\begin{lemma}
\label{Lstairs}
Let Assumption~\ref{assumption} hold. Then the length of the overlap of $w_n^{(i)}$ and $w_n^{(i+1)}$ is the same for all valid $i$.
\end{lemma}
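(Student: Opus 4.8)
The plan is to analyze the structure of the word $w_{n+1}$ locally around the images $w_n^{(0)}, w_n^{(1)}, \dots, w_n^{(k)}$ and exploit the pseudopalindromicity of $w_{n+1}$. First I would fix the index $j$ from Assumption~\ref{assumption} for which $w_n^{(j)}$ overlaps both $w_n^{(0)}$ and $w_n^{(k)}$. Set $\ell_i$ to be the length of the overlap of $w_n^{(i)}$ and $w_n^{(i+1)}$. The key observation is that each image $w_n^{(i)}$ is of the form $\vartheta_i(w_n)$ for some involutory antimorphism $\vartheta_i$, and that two overlapping occurrences of images of $w_n$ inside $w_{n+1}$ force a strong periodicity/symmetry condition: by Observation~\ref{o_overpal} the overlap $x$ of $w_n^{(i)}$ and $w_n^{(i+1)}$ is itself a pseudopalindrome (it is a suffix of one image and a prefix of the next, hence up to applying one more antimorphism a central factor of a pseudopalindrome), which pins down its length in terms of the longest pseudopalindromic suffix/prefix of $w_n$.

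The main step is to show that the step length is forced to be constant. I would argue as follows: since $w_n^{(j)}$ overlaps both $w_n^{(0)}$ on its left and $w_n^{(k)}$ on its right, and since $w_n$ has at most one pseudopalindromic prefix longer than $\frac{1}{2}|w_n|$ of each relevant symmetry type, the only way the images can be packed so that consecutive ones overlap is if they are equally spaced — otherwise one would either create a pseudopalindromic prefix longer than $w_{n+1}$ allows, or one would find a second, competing pseudopalindromic suffix of $w_n$ of the same type, contradicting the uniqueness of the longest one. Concretely, if $\ell_i \neq \ell_{i+1}$ for some $i$, the word $w_n^{(i,i+2)}$ (in the notation of Definition~\ref{Dwimages}) would contain two overlaps of different lengths, and comparing $w_n = \vartheta_{i+1}(w_n^{(i+1)})$ read from both ends yields two distinct pseudopalindromic suffixes of $w_n$ that are both the \emph{longest} of their type, a contradiction. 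The ternary restriction enters because there are only four involutory antimorphisms, so the possible types $\vartheta_i$ of consecutive images are severely constrained (Observations~\ref{o_natureofpalonpal} and~\ref{oEiEjEk}), and a short case analysis on which antimorphisms appear shows the spacing must be uniform.

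I expect the main obstacle to be the bookkeeping of \emph{which} antimorphism each image $w_n^{(i)}$ corresponds to and ensuring the "two longest pseudopalindromic suffixes of the same type" contradiction is watertight: one must be careful that the overlaps really are pseudopalindromes of the \emph{same} symmetry type before concluding they coincide, since an $R$-palindromic overlap and an $E_i$-palindromic overlap of different lengths can coexist. Handling this cleanly will require splitting into the cases $\vartheta_i = R$ versus $\vartheta_i = E_m$ and using Observation~\ref{o_natureofpalonpal} to track how the type of the pseudopalindrome changes as we pass from one image to the next. Once the types are sorted out, the equal-spacing conclusion follows by a routine induction on $i$, propagating the common overlap length from the pair $(w_n^{(0)}, w_n^{(1)})$ (or from wherever $w_n^{(j)}$ sits) outward in both directions.
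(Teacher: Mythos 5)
Your setup identifies the right objects (the triples $w_n^{(i)}$, $w_n^{(i+1)}$, $w_n^{(i+2)}$, the word $w_n^{(i,i+2)}$, and its pseudopalindromicity via Observation~\ref{o_overpal}), but the step where you actually derive the contradiction is not sound. You claim that $\ell_i\neq\ell_{i+1}$ would produce ``two distinct pseudopalindromic suffixes of $w_n$ that are both the longest of their type,'' resting on the assertion that $w_n$ has at most one pseudopalindromic prefix of each type longer than $\tfrac12|w_n|$. That assertion is false in general (take $w_n=i^m$: it has many $R$-palindromic prefixes longer than $m/2$), and more importantly nothing in the configuration forces either overlap to be the \emph{longest} pseudopalindromic suffix of its type --- the occurrences $w_n^{(i)}$ are defined as occurrences of images of $w_n$ in $w_{n+1}$, not via a longest-suffix extremality property, and two pseudopalindromic suffixes of different lengths coexist without any contradiction. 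So the heart of the proof is missing.

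The mechanism the paper uses is different and is worth internalizing: since $w_n^{(i,i+2)}$ is a pseudopalindrome, the set of occurrences of images of $w_n$ inside it is carried to itself by the central reflection, and by Definition~\ref{Dwimages} the indices $i_1<\dots<i_k$ enumerate \emph{all} such occurrences. Hence $w_n^{(i+1)}$, being the unique image occurrence strictly between $w_n^{(i)}$ and $w_n^{(i+2)}$, must be fixed by that reflection --- otherwise its mirror image would be an occurrence of an image of $w_n$ not present in the enumeration, a contradiction. Being fixed means $w_n^{(i+1)}$ is a central factor of $w_n^{(i,i+2)}$, which is exactly $\ell_i=\ell_{i+1}$. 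Note also that the case analysis on which of the four involutory antimorphisms realizes each image, and the induction propagating outward from $(w_n^{(0)},w_n^{(1)})$, are both unnecessary for this lemma; the type bookkeeping you anticipate belongs to the proof of Theorem~\ref{Ttwopseudo}, not here.
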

\begin{proof}
Using Assumption~\ref{assumption}, we have that three consecutive images $w_n^{(i)}$, $w_n^{(i+1)}$, and $w_n^{(i+2)}$ overlap pairwise for every possible $i$. Suppose there exists a triplet $w_n^{(i)}$, $w_n^{(i+1)}$, and $w_n^{(i+2)}$ such that $w_n^{(i+1)}$ is not a central factor of $w_n^{(i, i+2)}$. Since $w_n^{(i, i+2)}$ is a pseudopalindrome by Observation \ref{o_overpal}, there exists another image of $w_n$ that is not included in the sequence of $w_n^{(i)}$, which is a contradiction.

\end{proof}

\begin{theorem}
\label{Ttwopseudo}
At most two pseudopalindromic prefixes may be missed between the prefixes $w_n$ and $w_{n+1}$ of $\mbu \DT$ from Definition \ref{Dpseudo}.

\end{theorem}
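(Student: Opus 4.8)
The plan is to analyze the configuration of images $w_n^{(0)}, w_n^{(1)}, \dots, w_n^{(k)}$ of $w_n$ inside $w_{n+1}$ and show that they force at most two new pseudopalindromic prefixes to appear strictly between $w_n$ and $w_{n+1}$. First I would recall the basic mechanism: since $w_{n+1} = (w_n \delta_{n+1})^{\vartheta_{n+1}}$, any pseudopalindromic prefix $p$ with $|w_n| < |p| < |w_{n+1}|$ must have an image of $w_n$ as a suffix (as noted in the text after Definition~\ref{Dnormalized}), hence $p = w_n^{(0,j)}$ for some $j$, and by Observation~\ref{o_overpal} the relevant prefixes are exactly those $w_n^{(0,j)}$ that happen to be $\vartheta$-palindromes for some antimorphism $\vartheta$. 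So the task reduces to bounding the number of indices $j$ for which $w_n^{(0,j)}$ is a pseudopalindrome.

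The key dichotomy is whether Assumption~\ref{assumption} holds or not. If there is no index $j \in \{1,\dots,k-1\}$ whose image $w_n^{(j)}$ overlaps both $w_n^{(0)}$ and $w_n^{(k)}$, then — because consecutive images must overlap for $w_{n+1}$ to be connected as claimed in Definition~\ref{Dwimages} — the images essentially come in at most a couple of overlapping "blocks," and one checks directly that at most two of the prefixes $w_n^{(0,j)}$ can close up into a pseudopalindrome (each block contributes at most one, and one must rule out a third by a length/parity argument using that there are only four antimorphisms on a ternary alphabet, cf. Observations~\ref{oEiEjEk} and \ref{o_natureofpalonpal}). If Assumption~\ref{assumption} does hold, then Lemma~\ref{Lstairs} applies: all consecutive overlaps have the same length, so the images form a regular "staircase," and $w_{n+1}$ — being itself the $\vartheta_{n+1}$-palindromic closure — is highly periodic. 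I would then argue that in such a periodic staircase the pseudopalindromic prefixes $w_n^{(0,j)}$ occur at a fixed period as well, and that the constraint that $w_{n+1}$ is the \emph{shortest} pseudopalindromic extension of $w_n\delta_{n+1}$ caps the number of intermediate ones at two: a third distinct pseudopalindromic prefix strictly shorter than $w_{n+1}$ would, by the palindromic-closure structure, force $w_{n+1}$ itself to be shorter, a contradiction.

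The main obstacle I anticipate is the case analysis in the staircase regime: ruling out a \emph{third} missed pseudopalindromic prefix requires carefully combining the period of $w_{n+1}$ with the fact that a pseudopalindrome is determined by its center together with the governing antimorphism, and the ternary alphabet admits four such antimorphisms rather than two. One must verify that consecutive $w_n^{(0,j)}$ cannot be pseudopalindromic "for free" in a way that chains indefinitely — concretely, that if $w_n^{(0,j)}$ and $w_n^{(0,j+1)}$ are both pseudopalindromes, the antimorphisms fixing them are distinct and related as in Observation~\ref{o_natureofpalonpal}, which pins the pattern down and prevents a third. I would organize this by first disposing of the degenerate subcases ($k \le 2$, or only one antimorphism occurring), then treating the generic periodic case via the period of $w_{n+1}$, and finally closing the argument with the minimality of the pseudopalindromic closure that defines $w_{n+1}$.
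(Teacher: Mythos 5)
Your skeleton matches the paper's: reduce the missed prefixes to the factors $w_n^{(0,j)}$, split on whether Assumption~\ref{assumption} holds, and invoke Lemma~\ref{Lstairs} in the staircase regime. But the decisive step --- why the cap is exactly \emph{two} --- rests on a claim that is false as stated. You argue that a third pseudopalindromic prefix strictly shorter than $w_{n+1}$ would ``force $w_{n+1}$ itself to be shorter.'' The closure $w_{n+1}=(w_n\delta_{n+1})^{\vartheta_{n+1}}$ is minimal only among $\vartheta_{n+1}$-palindromes; an intermediate $w_n^{(0,j)}$ that is, say, an $E_j$-palindrome while $\vartheta_{n+1}=E_i$ contradicts nothing. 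In fact, in the staircase regime \emph{every} $w_n^{(0,j)}$ is a pseudopalindrome by Observation~\ref{o_overpal}, so your minimality argument, taken literally, would show that no prefix can ever be missed, which is wrong. What actually yields ``two'' is the type bookkeeping you defer to the ``anticipated obstacle'' paragraph: since the longest pseudopalindromic suffix used at each closure step is the image of the previous term, the type of $w_n^{(0,j+1)}$ is determined by the types of $w_n^{(0,j-1)}$ and $w_n^{(0,j)}$ via Observation~\ref{o_natureofpalonpal}, so the sequence of types is a deterministic cycle of period $1$, $2$ or $3$ (all $R$; all $E_i$; $R,E_i,R,E_i,\dots$; or $E_i,E_j,E_k,E_i,\dots$). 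Because $w_{n+1}$ is the first term of type $\vartheta_{n+1}$, it equals $w_n^{(0,j)}$ with $j\le 3$, hence at most two terms are missed. This case analysis is the actual content of the paper's proof; it is exactly where the ternary bound ``two'' (versus the binary ``one'') comes from, and it cannot be replaced by the minimality appeal.

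The case where Assumption~\ref{assumption} fails is also underspecified. You speak of the images coming in ``blocks,'' but what is needed is a length bound: if a prefix was missed, an interior image of $w_n$ exists, and the failure of the assumption then forces $2|w_n|+1\le |w_{n+1}|\le 2|w_n|+2$; only after that does a finite check of the few admissible placements of images close the case. Without identifying that bound, the ``block'' picture does not by itself exclude a third missed prefix.
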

\begin{proof}
\begin{enumerate}
\item  First, suppose that Assumption~\ref{assumption} holds. We will now consider the possible palindromic nature of $w_n^{(0)}$ and $w_n^{(0,1)}$, see Figure \ref{Oschodynek} for a better understanding:

	\begin{figure}[ht!]
		\begin{center}
			\includegraphics[scale=1.1]{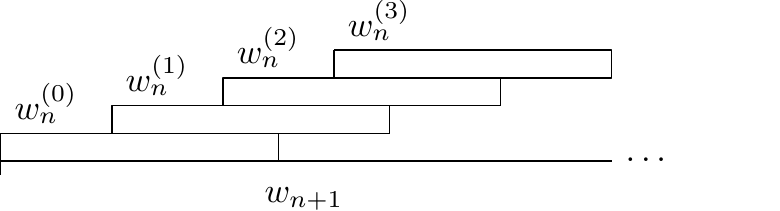}
		\end{center}
		\caption{Overlaps of $w_n$ and its images in $w_{n+1}$.}
		\label{Oschodynek}
	\end{figure}
	
\begin{itemize}

	\item $w_n^{(0)}$ is an $R$-palindrome and $w_n^{(0,1)}$ is an $R$-palindrome:
	
In order to construct $w_n^{(0,2)}$, we seek the longest $\T$-palindromic suffix of $w_n^{(0,1)}$, which is $R(w_n^{(0)})$. Thus $\T = R$ and $w_n^{(0,2)}$ is an $R$-palindrome. Analogously, we can deduce that all $w_n^{(0,j)}$ are $R$-palindromes and hence $w_{n+1}$ is also an $R$-palindrome. But this means that no palindromic prefix was missed between $w_n$ and $w_{n+1}$ by the construction of $w_{n+1}$ as the $R$-palindromic closure of $w_n$.
	
	\item $w_n^{(0)}$ is an $E_i$-palindrome and $w_n^{(0,1)}$ is an $E_i$-palindrome:
	
	Using similar arguments as in the case above, we deduce that no pseudopalindromic prefix was missed between $w_n$ and $w_{n+1}$.
	
	\item $w_n^{(0)}$ is an $R$-palindrome and $w_n^{(0,1)}$ is an $E_i$-palindrome:
	
	Now, in order to construct $w_n^{(0,2)}$, we look for the longest $\T$-palindromic suffix of $w_n^{(0,1)}$, which is $E_i(w_n^{(0)})$. It is an $R$-palindrome, thus $w_n^{(0,2)}$ is an $R$-palindrome, too. Similarly, in order to obtain $w_n^{(0,3)}$, the longest $\T$-palindromic suffix of $w_n^{(0,2)}$ is $R(w_n^{(0,1)})$, which is an $E_i$-palindrome and so is $w_n^{(0,3)}$. We get that $w_n^{(0,2)}$ is an $R$-palindrome, $w_n^{(0,3)}$ is an $E_i$-palindrome, $w_n^{(0,4)}$ an $R$-palindrome etc. There are two possibilities for $w_{n+1}$ since it is obtained by a pseudopalindromic closure: It is either an $R$-palindrome and then $w_{n+1}=w_n^{(0,2)}$ and one $E_i$-palindromic prefix was missed. Or, it is an $E_i$-palindrome and then $w_{n+1}=w_n^{(0,1)}$ and no pseudopalindromic prefix was missed.
	
	\item $w_n^{(0)}$ is an $E_i$-palindrome and $w_n^{(0,1)}$ is an $R$-palindrome:
	
	Similarly as in the case above, at most one pseudopalindromic prefix may be missed.
	
	\item $w_n^{(0)}$ is an $E_i$-palindrome and $w_n^{(0,1)}$ is an $E_j$-palindrome:
	
	When we construct $w_n^{(0,2)}$, the longest $\T$-palindromic suffix of $w_n^{(0,1)}$ is $E_j(w_n^{(0)})$, by Lemma \ref{o_natureofpalonpal}, it is an $E_k$-palindrome. Hence, $w_n^{(0,2)}$ is an $E_k$-palindrome. Following the steps, we deduce that $w_n^{(0)}$, $w_n^{(0,1)}$, $w_n^{(0,2)}, \ldots$ are successively  $E_i$-, $E_j$-, $E_k$-, $E_i$-, $E_j$-, $E_k$-, $\ldots$ palindromes.
	
	Since $w_{n+1}$ is constructed using a pseudopalindromic closure, it follows that at most two pseudopalindromic prefixes were missed between $w_n$ and $w_{n+1}$.
	
\end{itemize}

\item Now, we will address the situation where Assumption~\ref{assumption} is not satisfied, i.e., there is no image of $w_n$ that overlaps with the prefix occurrence of $w_n$ and the suffix occurrence of an image of $w_n$. This can happen only if $2|w_n| + 1 \leq |w_{n+1}| \leq 2|w_n| + 2 $.
\begin{itemize}
	\item $|w_{n+1}| = 2|w_n| + 1$:
	In this case, it is easy to see that at most two pseudopalindromic prefixes were missed because there are only two ways to place the images of $w_n$ inside $w_{n+1}$ so that Assumption~\ref{assumption} is not satisfied.

	\item $|w_{n+1}| = 2|w_n| + 2$:
	Here, there are four ways to place the images of $w_n$ inside $w_{n+1}$ so that Assumption~\ref{assumption} is not satisfied. Suppose that we place three images of $w_n$ inside $w_{n+1}$, for example as in Figure \ref{Ospecialni}. By Observation~\ref{o_overpal}, $w_n^{(0,3)}$ is a pseudopalindrome. It is thus easily seen that $w_{n+1}$ contains another image of $w_n$ that satisfies Assumption~\ref{assumption}, which is a contradiction. The other possible cases can be excluded in a similar way.
	
	\begin{figure}[ht!]
		\begin{center}
			\includegraphics[scale=1.1]{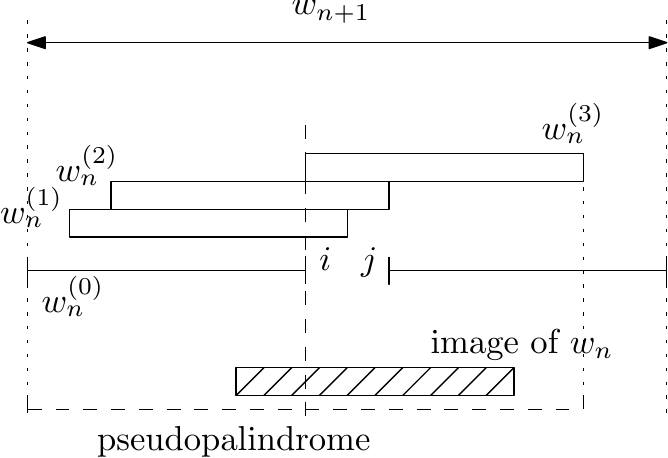}
		\end{center}
		\caption{Illustration of the contradiction for the case $|w_{n+1}| = 2|w_n| + 2$.}
		\label{Ospecialni}
	\end{figure}
	
\end{itemize}

\end{enumerate}

\end{proof}

\begin{corollary}
\label{CstairsType}
Let Assumption~\ref{assumption} be satisfied. Then we can deduce from the proof of Theorem \ref{Ttwopseudo} the following statements:
\begin{itemize}
\item If exactly one pseudopalindromic prefix $w_n^{(0,1)}$ was missed between $w_n$ and $w_{n+1}$, then $w_n^{(0)} = w_n$, $w_n^{(0,1)}$, and $w_n^{(0,2)} = w_{n+1}$ are successively either an $R$-, $E_i$-, and $R$-palindrome or an $E_i$-, $R$-, and $E_i$-palindrome or an $E_i$-, $E_j$-, and $E_k$-palindrome.
\item If exactly two pseudopalindromic prefixes $w_n^{(0,1)}$ and $w_n^{(0,2)}$ were missed between $w_n$ and $w_{n+1}$, then $w_n^{(0)}$, $w_n^{(0,1)}$, $w_n^{(0,2)}$, and $w_n^{(0,3)}= w_{n+1}$ are successively an $E_i$-, $E_j$-, $E_k$-, and $E_i$-palindrome.
\end{itemize}

\end{corollary}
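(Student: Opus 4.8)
The plan is to extract Corollary~\ref{CstairsType} directly from the case analysis already carried out in the proof of Theorem~\ref{Ttwopseudo}, so essentially no new argument is needed beyond careful bookkeeping. The first step is to identify which pseudopalindromic prefixes are missed. Under Assumption~\ref{assumption}, Lemma~\ref{Lstairs} says the images $w_n^{(0)}=w_n, w_n^{(1)},\dots,w_n^{(k)}$ overlap pairwise with one common overlap length, so the factors $w_n^{(0,1)}, w_n^{(0,2)},\dots,w_n^{(0,k)}$ form a strictly increasing chain of prefixes of $w_{n+1}$, each a pseudopalindrome by Observation~\ref{o_overpal}. Conversely, every pseudopalindromic prefix of length strictly between $|w_n|$ and $|w_{n+1}|$ ends with an image of $w_n$, hence coincides with one of the $w_n^{(0,j)}$, $1\le j\le k-1$. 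Therefore the missed pseudopalindromic prefixes between $w_n$ and $w_{n+1}$ are exactly $w_n^{(0,1)},\dots,w_n^{(0,k-1)}$ and $w_{n+1}=w_n^{(0,k)}$; in particular ``exactly one prefix missed'' means $k=2$ and ``exactly two prefixes missed'' means $k=3$.

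The second step is the observation that turns counting into type-matching. Since every $w_n^{(0,j)}$ with $j\ge 1$ is longer than $w_n$, it has $w_n\delta_{n+1}$ as a prefix; and because $w_{n+1}$ is by definition the shortest $\vartheta_{n+1}$-palindrome with prefix $w_n\delta_{n+1}$, the word $w_{n+1}$ is precisely the first term of the chain $w_n^{(0,1)}, w_n^{(0,2)},\dots$ whose palindromic type equals $\vartheta_{n+1}$. The type of each term is governed by the rule used in the proof of Theorem~\ref{Ttwopseudo} (the longest pseudopalindromic suffix of $w_n^{(0,j)}$ is the relevant image of $w_n^{(0,j-1)}$) together with Observation~\ref{o_natureofpalonpal}.

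The third step is to run through the five cases for the pair (type of $w_n^{(0)}$, type of $w_n^{(0,1)}$), exactly as in the proof of Theorem~\ref{Ttwopseudo}. For $(R,R)$ and $(E_i,E_i)$ the chain of types is constant, so $w_{n+1}=w_n^{(0,1)}$ and nothing is missed. For $(R,E_i)$ the chain of types is $R,E_i,R,E_i,\dots$, so at most one prefix is missed; if exactly one is missed then $w_{n+1}=w_n^{(0,2)}$ and $w_n^{(0)},w_n^{(0,1)},w_n^{(0,2)}$ are successively an $R$-, $E_i$-, $R$-palindrome, and the case $(E_i,R)$ is symmetric, giving $E_i$-, $R$-, $E_i$-. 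For $(E_i,E_j)$ with $i\ne j$ and $\{i,j,k\}=\{0,1,2\}$, Observation~\ref{o_natureofpalonpal} makes the chain of types periodic with pattern $E_i,E_j,E_k$, and Theorem~\ref{Ttwopseudo} caps the number of missed prefixes at two; reading off the chain, one missed prefix forces $w_{n+1}=w_n^{(0,2)}$ with types $E_i$-, $E_j$-, $E_k$-, while two missed prefixes force $w_{n+1}=w_n^{(0,3)}$ with types $E_i$-, $E_j$-, $E_k$-, $E_i$-. Collecting the cases yields exactly the two assertions of the corollary.

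I do not expect a genuine obstacle, since this is a corollary of a proof rather than of a statement; the only point demanding care is the second step, namely the claim that $w_{n+1}$ is the \emph{first} term of the chain that is a $\vartheta_{n+1}$-palindrome, so that no shorter admissible pseudopalindromic prefix is silently skipped over. This is precisely the minimality built into the definition of the $\vartheta$-palindromic closure, applied to the fact that each $w_n^{(0,j)}$ with $j\ge 1$ already contains $w_n\delta_{n+1}$; once this is in place, every case becomes a one-line verification.
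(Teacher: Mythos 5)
Your proposal is correct and follows essentially the same route as the paper, which gives no separate argument for Corollary~\ref{CstairsType} but simply reads the type chains ($R,E_i,R,\dots$ in the mixed $R$/$E_i$ cases and $E_i,E_j,E_k,E_i,\dots$ in the all-exchange case) off the case analysis in the proof of Theorem~\ref{Ttwopseudo}. Your added justification that the missed prefixes are exactly the $w_n^{(0,j)}$ and that $w_{n+1}$ is the first term of the chain of type $\vartheta_{n+1}$ (by minimality of the pseudopalindromic closure) is exactly what the paper's proof uses implicitly.
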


\subsection{Special cases}
In the proof of Theorem \ref{Ttwopseudo}, we set apart the instances where Assumption~\ref{assumption} was not satisfied. In this section, we will investigate separately those cases and show that they lead only to special cases of infinite words.
In the first place, we will state three useful lemmas discussing cases where $w$ is a pseudopalindrome and $wa$ or $wab$ are also pseudopalindromes for $a,b \in \A$. This kind of pseudopalindromes appears to be significant for examining the words where  Assumption~\ref{assumption} is not satisfied.

\begin{lemma} \label{tvarjednonavic}
Let $\mathcal{A}$ be a finite alphabet, $n \in \mathbb{N}_0$, $a_{n+1}  \in \mathcal{A}$, and $\vartheta_1$, $\vartheta_2$ be two involutory antimorphisms over $\mathcal{A}$. Let $w = \vartheta_1(w)=a_1\dots a_n$. Then $wa_{n+1}=\vartheta_2(wa_{n+1})$ if, and only if, $wa_{n+1}=a_1\vartheta_2\vartheta_1(a_1)\dots(\vartheta_2\vartheta_1)^n(a_1)$ and $a_{n+1}=\vartheta_2(a_1).$
\end{lemma}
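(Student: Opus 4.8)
The plan is to prove both implications directly by iterating the two defining palindromic identities, treating the claimed formula $wa_{n+1}=a_1\vartheta_2\vartheta_1(a_1)\cdots(\vartheta_2\vartheta_1)^n(a_1)$ as the natural candidate and checking it letter by letter.

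For the forward direction, suppose $w=\vartheta_1(w)$ and $wa_{n+1}=\vartheta_2(wa_{n+1})$. First I would extract the two pointwise conditions that these identities impose on the letters. Write $w=a_1\cdots a_n$ and $v=wa_{n+1}=a_1\cdots a_{n+1}$. Since $\vartheta_1$ is an involutory antimorphism, $w=\vartheta_1(w)$ means $a_{n+1-k}=\vartheta_1(a_{k+1})$ for $0\le k\le n-1$; similarly $v=\vartheta_2(v)$ means $a_{n+2-k}=\vartheta_2(a_k)$ for $1\le k\le n+1$. Reading the second condition at $k=1$ gives immediately $a_{n+1}=\vartheta_2(a_1)$, which is one of the two conclusions. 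The key step is then to show $a_{j}=(\vartheta_2\vartheta_1)^{\,j-1}(a_1)$ for $j=1,\dots,n+1$ by induction on $j$. The induction step combines the two symmetries: from the $v$-palindrome relation $a_{j}=\vartheta_2(a_{n+2-j})$, and from the $w$-palindrome relation $a_{n+2-j}=\vartheta_1(a_{j-1})$ (valid because $1\le j-1$ and $n+2-j\le n$ exactly when $2\le j\le n+1$), we get $a_{j}=\vartheta_2\vartheta_1(a_{j-1})$; applying the induction hypothesis to $a_{j-1}$ finishes it, and the base case $j=1$ is trivial. Care is needed at the boundary index $j=n+1$: there $n+2-j=1$, and $a_{n+2-j}=\vartheta_1(a_{j-1})$ reads $a_1=\vartheta_1(a_n)$, which is indeed the $k=n-1$ instance of the $w$-palindrome condition, so the recursion still applies and yields the asserted form for all of $v$.

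For the converse, assume $w=\vartheta_1(w)$ and that $v:=wa_{n+1}$ has the form $a_1\vartheta_2\vartheta_1(a_1)\cdots(\vartheta_2\vartheta_1)^n(a_1)$ with $a_{n+1}=\vartheta_2(a_1)$; I must check $v=\vartheta_2(v)$. The slick way is to apply Observation~\ref{o_overpal}: take the word $w$ (a $\vartheta_1$-palindrome), its image $\tilde w=\vartheta_2(w)$, and the suffix $p$ of $w$ of length $n-1$, so that $w p^{-1}\tilde w = a_1\,\vartheta_2(w)$ has length $n+1$ and, by that observation, is a $\vartheta_2$-palindrome. It then remains only to verify that this word equals $v$, i.e. that $\vartheta_2(w)=\vartheta_2\vartheta_1(a_1)\cdots(\vartheta_2\vartheta_1)^n(a_1)$ as a length-$n$ word; but applying $\vartheta_2$ to $w=a_1\cdots a_n=\vartheta_1(w)$ and using the already-established identity $a_j=(\vartheta_2\vartheta_1)^{j-1}(a_1)$ (which in this direction is just the hypothesis read off the explicit form together with $w=\vartheta_1(w)$) gives exactly the required letters, and the last letter matches since $a_{n+1}=\vartheta_2(a_1)=(\vartheta_2\vartheta_1)^0\vartheta_2(a_1)$ is the last letter of $a_1\vartheta_2(w)$. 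Alternatively one can avoid Observation~\ref{o_overpal} and just verify the pointwise condition $a_{n+2-k}=\vartheta_2(a_k)$ directly from the explicit formula, using $\vartheta_2^2=\mathrm{id}$ and $\vartheta_1^2=\mathrm{id}$ to collapse $(\vartheta_2\vartheta_1)^{n+1-k}\vartheta_2 = \vartheta_2 (\vartheta_1\vartheta_2)^{n+1-k}$ appropriately; I would present whichever is shorter after drafting.

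The main obstacle I anticipate is purely bookkeeping: keeping the index ranges of the two palindromic conditions aligned so that the recursion $a_j=\vartheta_2\vartheta_1(a_{j-1})$ is legitimate at every step, in particular at the two ends $j=1$ and $j=n+1$ and in the degenerate cases $n=0$ and $n=1$ (where $w=\varepsilon$ or $w$ is a single letter and some of the conditions are vacuous). There is no conceptual difficulty — the statement is essentially the observation that forcing a $\vartheta_1$-palindrome to extend to a $\vartheta_2$-palindrome by one letter rigidly propagates the letter $a_1$ through the map $\vartheta_2\vartheta_1$ — so once the indexing is set up cleanly both directions are short.
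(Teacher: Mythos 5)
Your proof is correct in substance but takes a genuinely different route from the paper: the paper disposes of Lemma~\ref{tvarjednonavic} in a single line by citing Lemma~17 of \cite{bm}, whereas you give a self-contained, letter-by-letter argument. The forward direction is sound: composing the two palindromic symmetries to get the recursion $a_j=\vartheta_2\vartheta_1(a_{j-1})$ is exactly the right mechanism, and the degenerate cases $n=0,1$ are handled by your base case. (One transcription slip: your stated form of the $\vartheta_1$-palindrome condition, $a_{n+1-k}=\vartheta_1(a_{k+1})$ for $0\le k\le n-1$, is off by one --- at $k=0$ it refers to $a_{n+1}$, which is not a letter of $w$; the form $a_{n+2-j}=\vartheta_1(a_{j-1})$ that you actually use in the induction is the correct one.)

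One substantive caveat concerns the converse. The route through Observation~\ref{o_overpal} does not work as written: that observation implicitly assumes that the suffix $p$ of $w$ is also a prefix of $\tilde w=\vartheta(w)$ (its proof asserts $\vartheta(w)=p\,\vartheta(u)$, which forces $p=\vartheta(p)$). For an arbitrary $\vartheta_1$-palindrome $w$ this fails, and $a_1\vartheta_2(w)$ need not be a $\vartheta_2$-palindrome: $w=010$ is an $R$-palindrome, yet $a_1E_0(w)=0020$ while $E_0(0020)=0100\neq 0020$. Establishing $p=\vartheta_2(p)$ in your setting is essentially the claim being proved, so this route is circular. Your alternative --- verifying $a_i=\vartheta_2(a_{n+2-i})$ directly from the explicit formula via $\vartheta_2(\vartheta_2\vartheta_1)^{n+1-i}=(\vartheta_1\vartheta_2)^{n+1-i}\vartheta_2$ together with $a_{n+1}=\vartheta_2(a_1)$ --- does go through, so that is the version to present. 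With that choice the argument is complete and, unlike the paper's, has the advantage of not outsourcing the content to an external reference.
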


\begin{proof}
It is a~direct consequence of Lemma 17 from~\cite{bm}.
%
\end{proof}

\begin{lemma} \label{Ljednonavic}
Let $\mathcal{A}=\{0,1,2\}$, $n \in \mathbb{N}_0$, $a_{n+1} \in \mathcal{A}$, and $\vartheta_1, \vartheta_2 \in \{E_0,E_1,E_2,R\}$. Furthermore, let $w=\vartheta_1(w)$. If $wa_{n+1} = \vartheta_2(wa_{n+1})$, then there exist $i,j,k \in \mathcal{A}$ pairwise different such that $wa_{n+1}$ is the prefix of length $n+1$ of one of the following infinite words:
\begin{itemize}
	\item $i^{\omega}$,
	\item $(ij)^{\omega}$,
	\item $(ijk)^{\omega}$.
\end{itemize}
\end{lemma}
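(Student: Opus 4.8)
The plan is to reduce everything to Lemma~\ref{tvarjednonavic} and then analyse a single permutation of the three-letter alphabet. Writing $w = a_1\cdots a_n$, Lemma~\ref{tvarjednonavic} applied to $w = \vartheta_1(w)$ together with the hypothesis $wa_{n+1} = \vartheta_2(wa_{n+1})$ yields
\[
wa_{n+1} = a_1\,\sigma(a_1)\,\sigma^2(a_1)\cdots\sigma^n(a_1), \qquad \sigma := \vartheta_2\vartheta_1 .
\]
Thus $wa_{n+1}$ is nothing but the list of the first $n+1$ terms of the forward orbit of the single letter $a_1$ under the map $\sigma$ acting on $\mathcal{A}=\{0,1,2\}$, and the whole problem becomes the description of such orbits.

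Next I would observe that $\sigma$, restricted to $\mathcal{A}$, is a permutation of a three-element set: each of $\vartheta_1,\vartheta_2\in\{E_0,E_1,E_2,R\}$ sends letters to letters bijectively, hence so does their composition $\sigma=\vartheta_2\vartheta_1$. A permutation of $\{0,1,2\}$ has order $1$, $2$, or $3$, and this trichotomy corresponds exactly to the three cases claimed in the statement.

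I would then go through the three cases. If $\sigma$ is the identity, then $wa_{n+1}=a_1^{\,n+1}$ is a prefix of $i^\omega$ with $i=a_1$. If $\sigma$ is a transposition, with fixed point $f$ and transposed pair $\{i,j\}$: when $a_1=f$ we again obtain $wa_{n+1}=f^{\,n+1}$, a prefix of $f^\omega$; when $a_1\in\{i,j\}$ the orbit of $a_1$ alternates between the two transposed letters, so $wa_{n+1}$ is a prefix of $\bigl(a_1\sigma(a_1)\bigr)^\omega$, a word of the form $(ij)^\omega$. If $\sigma$ is a $3$-cycle, then $a_1,\sigma(a_1),\sigma^2(a_1)$ are pairwise different and $\sigma^3=\mathrm{id}$, so $wa_{n+1}$ is a prefix of $\bigl(a_1\,\sigma(a_1)\,\sigma^2(a_1)\bigr)^\omega$, which is of the form $(ijk)^\omega$ with $\{i,j,k\}=\{0,1,2\}$. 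In every case $wa_{n+1}$ is a prefix of one of $i^\omega$, $(ij)^\omega$, $(ijk)^\omega$, which is the assertion.

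The argument is essentially bookkeeping once Lemma~\ref{tvarjednonavic} is available; there is no serious obstacle. The only point requiring a little care is the transposition case, where one must split off the sub-case $a_1=f$ (producing the constant word $f^\omega$) from the sub-case where $a_1$ lies in the transposed pair (producing the $2$-periodic word), and one must explicitly note that $\sigma$ is genuinely a permutation of $\mathcal{A}$ so that the orbit lengths behave as claimed.
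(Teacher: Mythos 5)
Your proposal is correct and follows essentially the same route as the paper: both reduce to Lemma~\ref{tvarjednonavic}, so that $wa_{n+1}$ is the orbit of $a_1$ under $\sigma=\vartheta_2\vartheta_1$, and then classify that orbit. The only difference is organizational --- you group the cases by the cycle type of $\sigma$ as a permutation of $\{0,1,2\}$ (identity, transposition, $3$-cycle), whereas the paper enumerates the eight possible pairs $(\vartheta_1,\vartheta_2)$ explicitly, which lets it also record the resulting congruence conditions on $n$; your grouping is slightly cleaner but yields the same conclusion.
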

\begin{proof}
From Lemma~\ref{tvarjednonavic} we know that $wa_{n+1}=\vartheta_2(wa_{n+1})$, if, and only if, \\ $wa_{n+1}=a_1\vartheta_2\vartheta_1(a_1)\dots(\vartheta_2\vartheta_1)^n(a_1)$ and $a_{n+1}=\vartheta_2(a_1).$ We will address in the sequel the different possible cases of the antimorphisms $\vartheta_1$ and $\vartheta_2$ and the letter $a_1$ denoted by $i$:
\begin{itemize}
\item
$\vartheta_1=\vartheta_2=R$, then $\vartheta_2\vartheta_1=I$, hence $wa_{n+1}=i^{n+1}$,
\item
$\vartheta_1=\vartheta_2=E_i,$ then $\vartheta_2\vartheta_1=I$, hence $wa_{n+1}=i^{n+1}$,
\item
$\vartheta_1=R, \vartheta_2=E_i$ (or the other way around), then $\vartheta_2\vartheta_1=E_iR$, thus $wa_{n+1}=i^{n+1}$,
\item
$\vartheta_1=R, \vartheta_2=E_k$, then $n$ is odd and $\vartheta_2\vartheta_1=E_kR$, thus $wa_{n+1}=(ij)^{\frac{n+1}{2}}$,
\item
$\vartheta_1=E_k, \vartheta_2=R$, then $n$ is even and $\vartheta_2\vartheta_1=RE_k$, thus $wa_{n+1}=(ij)^{\frac{n}{2}}i$,
\item
$\vartheta_1=E_k, \vartheta_2=E_j$, then $n\equiv2$\;mod\;$3$ and $\vartheta_2\vartheta_1=E_jE_k$, thus $wa_{n+1}=(ijk)^{\frac{n+1}{3}}$,
\item
$\vartheta_1=E_j, \vartheta_2=E_i$, then $n\equiv0$\;mod\;$3$ and $\vartheta_2\vartheta_1=E_iE_j$, thus $wa_{n+1}=(ijk)^{\frac{n}{3}}i$,
\item
$\vartheta_1=E_i, \vartheta_2=E_k$, then $n\equiv1$\;mod\;$3$ and $\vartheta_2\vartheta_1=E_kE_i$, thus $wa_{n+1}=(ijk)^{\frac{n-1}{3}}ij$.
\end{itemize}
\end{proof}

\begin{lemma} \label{Ldvenavic}
Let $w_n$ and $w_{n+1}$ be the prefixes of $\mbu \DT$ from Definition \ref{Dpseudo}, where $|w_{n+1}| = |w_n| + 2$. If Assumption~\ref{assumption} is not satisfied, then $w_{n+1}$ is a prefix of one of the following infinite words:
\begin{itemize}
\item
$(ij)^{\omega}$,
\item
$(ijji)^{\omega}$,
\item
$(ijk)^{\omega}$,
\item
$(ijik)^{\omega}$,
\item
$(ijjkki)^{\omega}$,
\item
$(ijkj)^{\omega}$,
\item
$(iijj)^{\omega}$,
\item
$(iijjkk)^{\omega}$,
\end{itemize}
where $i,j,k \in \{0,1,2\}$ are pairwise distinct letters.
\end{lemma}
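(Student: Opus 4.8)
The plan is to exploit the minimal increment $|w_{n+1}|=|w_n|+2$ to show that $w_{n+1}$ is almost periodic, in the spirit of Lemma~\ref{Ljednonavic} but with a two-step recurrence in place of a one-step one. First I would unwind Definition~\ref{Dpseudo}. Writing $w_n=a_1a_2\cdots a_n$ and $a_{n+1}=\delta_{n+1}$, the word $w_{n+1}$ is the $\vartheta_{n+1}$-palindromic closure of $w_n\delta_{n+1}$; since such a closure of a word $u=vp$ (with $p$ its longest $\vartheta_{n+1}$-palindromic suffix) equals $vp\vartheta_{n+1}(v)$ and $|w_{n+1}|=|w_n\delta_{n+1}|+1$, we must have $|v|=1$. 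Hence $w_{n+1}=a_1a_2\cdots a_{n+1}a_{n+2}$ with $a_{n+2}=\vartheta_{n+1}(a_1)$, and, being pseudopalindromic prefixes of $\mbu\DT$, $w_n$ is a $\vartheta_n$-palindrome and $w_{n+1}$ is a $\vartheta_{n+1}$-palindrome.

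Next I would extract a recurrence on the letters. From $w_n=\vartheta_n(w_n)$ we get $a_{n+1-j}=\vartheta_n(a_j)$ for $1\le j\le n$, and from $w_{n+1}=\vartheta_{n+1}(w_{n+1})$ we get $a_l=\vartheta_{n+1}(a_{n+3-l})$ for $1\le l\le n+2$. Applying the second identity to $l=n+1-j$ and combining it with the first (that is, reflecting an index first through the center of $w_n$ and then through the center of $w_{n+1}$) yields
\[
a_{j+2}=\vartheta_{n+1}\vartheta_n(a_j)\qquad\text{for all }1\le j\le n .
\]
So for $j=1,\dots,n$ this determines $a_3,\dots,a_{n+2}$, and hence the whole word $w_{n+1}=a_1a_2\cdots a_{n+2}$ is determined by the two letters $a_1,a_2$ and the permutation $\mu:=\vartheta_{n+1}\vartheta_n$ of $\{0,1,2\}$: its letters in odd positions form a prefix of $a_1,\mu(a_1),\mu^2(a_1),\dots$ and its letters in even positions form a prefix of $a_2,\mu(a_2),\mu^2(a_2),\dots$, so $w_{n+1}$ is a prefix of the periodic word obtained by interleaving these two one-sided sequences letter by letter.

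It then remains to enumerate. As a permutation of a three-element set, $\mu$ has order $1$, $2$, or $3$, and I would split into these three cases and, in each, further according to whether $a_1=a_2$. In every one of the resulting finitely many sub-cases (up to a relabeling of the letters) the interleaved periodic word can be written out explicitly, and one checks that it is one of the eight listed words: $\mu=I$ with $a_1\neq a_2$ gives $(ij)^{\omega}$; a transposition gives $(ijik)^{\omega}$ or $(ijkj)^{\omega}$ according to which of $a_1,a_2$ it fixes, or $(ijji)^{\omega}$ if it moves both of $a_1\neq a_2$, or $(iijj)^{\omega}$ if it moves the common value $a_1=a_2$; a $3$-cycle gives $(ijk)^{\omega}$ or $(ijjkki)^{\omega}$ when $a_1\neq a_2$, and $(iijjkk)^{\omega}$ when $a_1=a_2$. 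The only outcome not in the list is the degenerate $i^{\omega}$, which occurs exactly when $\mu$ fixes the common value $a_1=a_2$; but then $w_{n+1}=i^{|w_n|+2}$, forcing $w_n=i^{|w_n|}$ and $\delta_{n+1}=i$, whereas the $\vartheta_{n+1}$-palindromic closure of $i^{|w_n|+1}$ has length $|w_n|+1$ or $2|w_n|+2$, never $|w_n|+2$ when $|w_n|\geq1$; and when $|w_n|=0$ one necessarily has $w_{n+1}=(\delta_1)^{\vartheta_1}=ij$ with $i\neq j$, a prefix of $(ij)^{\omega}$. This exhausts all cases.

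The periodicity is the conceptual heart and is short; the main obstacle is the bookkeeping in the last paragraph, where two points deserve care. First, one must verify that the recurrence holds already at $j=1$ — which is why it is essential to use the palindrome $w_{n+1}$ itself, rather than merely the longest $\vartheta_{n+1}$-palindromic suffix of $w_n\delta_{n+1}$ — so that $w_{n+1}$ is a prefix of the periodic word from its very first letter, not just eventually periodic. Second, the degenerate unary outcome and the small values $|w_n|\in\{0,1\}$ must be disposed of separately. Finally, I would note that the hypothesis that Assumption~\ref{assumption} fails does not seem to be needed for this implication itself; it merely records the context in which the lemma is used later.
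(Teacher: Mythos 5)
Your proof is correct and follows essentially the same route as the paper's: both arguments exploit the fact that the two nested pseudopalindromic reflections compose to a letter permutation $\mu=\vartheta_{n+1}\vartheta_{n}$, forcing the two-step recurrence $a_{j+2}=\mu(a_j)$ and hence the interleaved periodic structure, and then enumerate the finitely many cases (the paper works backwards from the last two letters and lists the letter pairs explicitly, you work forwards and classify by the order of $\mu$ and whether $a_1=a_2$, which is only a difference in bookkeeping). Your explicit exclusion of the degenerate outcome $i^{\omega}$ and your remark that the failure of Assumption~\ref{assumption} is not actually used in this implication are both accurate.
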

\begin{proof}
If $w_n = i$, then $w_{n+1} \in \{iji, ijk\}$, and if $|w_n| = 2$, then $w_{n+1} \in \{iijj, ijij, ijji$, $ijjk, ijki\}$. Now, we can suppose that $|w_n| > 2$. Let $w_n^{(p)}$ denote the prefix occurrence of $w_n$ in $w_{n+1}$ and $w_n^{(s)}$ denote the suffix occurrence of an image of $w_n$ in $w_{n+1}$.

Suppose first that $w_{n+1}$ end in two different letters. Without loss of generality, let those two letters be $01$. Since $w_n^{(s)}$ has $01$ as a suffix, then $w_n^{(p)}$ has two different letters, say $ij$, as a suffix, too. Suppose that $w_n^{(p)}$ is a $\vartheta_1$-palindrome and that $w_n^{(s)} = \vartheta_2 (w_n^{(p)})$ (i.e., $w_n=\vartheta_1(w_n)$ and $w_{n+1}=\vartheta_2(w_{n+1})$). The factor $w_n^{(p)}$ has then $\vartheta_2 (01)$ as prefix, thus $\vartheta_1 \vartheta_2 (01)$ as suffix. It implies that $w_{n+1}$ is a~suffix of $\dots(\vartheta_1 \vartheta_2)^2 (01)(\vartheta_1 \vartheta_2) (01)01$ etc. Let us examine the different possible cases:

\begin{itemize}
	\item $ij = 01$, then $w_n01$ is a prefix of $(01)^{\omega}$ or $(10)^{\omega}$,
	\item $ij = 10$, then $w_n01$ is a prefix of $(0110)^{\omega}$ or $(1001)^{\omega}$,
	\item $ij = 12$, then $w_n01$ is a prefix of $(201)^{\omega}$, $(120)^{\omega}$, or $(012)^{\omega}$,
	\item $ij = 21$, then $w_n01$ is a prefix of $(1210)^{\omega}$ or $(1012)^{\omega}$, \\ here, the words $(2101)^{\omega}$ and $(0121)^{\omega}$ had been considered, but their prefixes ending in $01$ are not pseudopalindromes,
	\item $ij = 20$, then $w_n01$ is a prefix of $(122001)^{\omega}$, $(200112)^{\omega}$, or $(011220)^{\omega}$,
	\item $ij = 02$, then $w_n01$ is a prefix of $(1020)^{\omega}$ or $(2010)^{\omega}$, \\
other words could have also been considered, but their prefixes ending in $01$ are again not pseudopalindromes.
\end{itemize}

Now, we have to  address the situation where the suffix of $w_{n+1}$ of length two is equal to $ii$. Once again, the suffix of $w_n^{(p)}$ will be $\vartheta_1\vartheta_2(ii)$. The word $w_{n+1}$ can now only be a suffix of $(jjii)^{\omega}$ or $(kkjjii)^{\omega}$.
\end{proof}

The aim of the following observations is to find all possible pseudopalindromic prefixes $w_n$ and $w_{n+1}$ such that a pseudopalindromic prefix was missed between them and Assumption~\ref{assumption} is not satisfied.

\begin{proposition}
\label{Ospecwijw}
Let $w_n$ and $w_{n+1}$ be the prefixes of $\mbu \DT$ from Definition \ref{Dpseudo} and $|w_{n+1}| = 2|w_n| + 2$. Suppose Assumption~\ref{assumption} does not hold. Furthermore, suppose the overlap of $w_n^{(0)}$ and $w_n^{(1)}$ is either of length $|w_n| - 1$ or of length $|w_n| - 2$. Then $w_{n+1}$, $w_n$, and the missed pseudopalindromic prefix(es) are of the form:
\begin{table}[ht!]
\catcode`\-=12
\centering
\bgroup
\def\arraystretch{1.2}

\begin{tabular}{|l||l|l|}
\hline
$w_{n+1}$              & $w_n$       & Missed pseudopalindromic prefix(es) \\ \hline \hline
$i^{l}j^{l}$       		    & $i^{l-1}$        	& $i^{l}$                      	\\ \hline
$(ij)^l(ki)^l$         		& $(ij)^{l-1} i$   	& for $l \geq 2$: $(ij)^l$, for $l = 1$: $ij$, $ijk$ \\ \hline
$(ij)^lik(jk)^l$       		& $(ij)^l$         	& $(ij)^l i$                       \\ \hline
$ijjkkiijjk$        		& $ijjk$    		& $ijjkki$, $ijjkkiij$             \\ \hline
$ijjkki$     				& $ij$   			& $ijjk$               				\\ \hline
$ijjiij$ 					& $ij$ 				& $ijji$             				\\ \hline
$(ijkj)^l(ikij)^l$    		& $(ijkj)^{l-1} ijk$ 	& $(ijkj)^l i$               		\\ \hline
$(ijkj)^lijki(kjki)^l$    	& $(ijkj)^l i$   	& $(ijkj)^l ijk$                		\\ \hline
\end{tabular}

\egroup

\end{table}

\end{proposition}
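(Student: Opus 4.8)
The plan is to pin down $w_n$ first, then compute $w_{n+1}$ explicitly, and finally read off the missed pseudopalindromic prefixes.

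\emph{Step 1 (locating $w_n$).} Since $w_n^{(1)}$ is an image of $w_n$ overlapping $w_n^{(0)}=w_n$ in a~suffix of $w_n$ of length $|w_n|-1$ or $|w_n|-2$, the factor $w_n^{(0,1)}$ is a pseudopalindrome by Observation~\ref{o_overpal}; it is a prefix of $\mbu\DT$, it has the $\vartheta_n$-palindrome $w_n$ as a prefix, and $|w_n^{(0,1)}|\in\{|w_n|+1,|w_n|+2\}$. If $|w_n^{(0,1)}|=|w_n|+1$, Lemma~\ref{Ljednonavic} (with $w=w_n$) forces $w_n^{(0,1)}$, hence $w_n$, to be a prefix of $i^{\omega}$, $(ij)^{\omega}$ or $(ijk)^{\omega}$. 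If $|w_n^{(0,1)}|=|w_n|+2$, I would apply Lemma~\ref{Ldvenavic} to the pair $\bigl(w_n^{(0)},w_n^{(0,1)}\bigr)$: its hypotheses are met because $w_n^{(0,1)}$ is a pseudopalindromic prefix of $\mbu\DT$ of length $|w_n^{(0)}|+2$ with $w_n^{(0)}$ as a prefix and an image of $w_n$ as a suffix, and Assumption~\ref{assumption} fails for it, since inside $w_n^{(0,1)}$ the only occurrences of images of $w_n$ are the two extreme ones and hence none bridges them. Consequently $w_n$ is a prefix of one of the nine words $i^{\omega}$, $(ij)^{\omega}$, $(ijk)^{\omega}$, $(ijji)^{\omega}$, $(ijik)^{\omega}$, $(ijjkki)^{\omega}$, $(ijkj)^{\omega}$, $(iijj)^{\omega}$, $(iijjkk)^{\omega}$ for suitable pairwise distinct $i,j,k$.

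\emph{Step 2 (computing $w_{n+1}$).} The $\vartheta$-palindromic closure of a word of length $L$ has length $2L$ minus the length of its longest $\vartheta$-palindromic suffix; since $w_{n+1}=(w_n\delta_{n+1})^{\vartheta_{n+1}}$ has length $2|w_n|+2=2|w_n\delta_{n+1}|$, that longest suffix is empty and $w_{n+1}=w_n\delta_{n+1}\,\vartheta_{n+1}(w_n\delta_{n+1})$. In particular $\delta_{n+1}$ is not a $\vartheta_{n+1}$-palindrome, so, putting $a=\delta_{n+1}$ and $b=\vartheta_{n+1}(a)$, we get $a\neq b$; since $\vartheta_{n+1}$ then interchanges $a$ and $b$, it is $E_c$ with $\{a,b,c\}=\{0,1,2\}$, and $w_{n+1}=w_n\,ab\,E_c(w_n)$. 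Thus $w_{n+1}$ is completely determined by $w_n$ (a prefix of one of the nine words of Step 1) and the two letters $a,c$ with $c\neq a$, subject to (i) $w_na$ having no non-empty $E_c$-palindromic suffix, so that the closure indeed equals $w_{n+1}$, and (ii) $w_{n+1}$ containing an image of $w_n$ that starts at position $2$ or $3$, which is the overlap hypothesis of the statement. I would then run through, for each of the nine periodic words, the finitely many shapes of $w_n$ given by $|w_n|$ modulo the period and the few admissible pairs $(a,c)$, discard those failing (i) or (ii), and collect the survivors; these are the eight families of the table: $i^{\omega}$ gives $w_{n+1}=i^lj^l$, $(ij)^{\omega}$ gives $w_{n+1}=(ij)^l(ki)^l$ and $w_{n+1}=(ij)^lik(jk)^l$ according to the parity of $|w_n|$, $(ijkj)^{\omega}$ gives $w_{n+1}=(ijkj)^l(ikij)^l$ and $w_{n+1}=(ijkj)^lijki(kjki)^l$, and short prefixes of $(ijjkki)^{\omega}$ and $(ijji)^{\omega}$ give the sporadic rows $ijjkkiijjk$, $ijjkki$ and $ijjiij$.

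\emph{Step 3 (missed prefixes) and where the work is.} For each surviving pair, $w_n^{(0,1)}$ has length strictly between $|w_n|$ and $|w_{n+1}|$ and is therefore a missed pseudopalindromic prefix; if moreover $w_n^{(0,2)}$ is still strictly shorter than $w_{n+1}$, then by Observation~\ref{o_overpal} it is a second missed pseudopalindromic prefix, and by Theorem~\ref{Ttwopseudo} there are no further ones. Substituting the explicit $w_n$ and $w_{n+1}$ from Step 2 into $w_n^{(0,1)}$ (and, when it occurs, $w_n^{(0,2)}$) yields the last column of the table. The core difficulty is the exhaustive, albeit elementary, case analysis of Step 2: one must be complete over the nine periodic words, handle carefully the short prefixes $w_n$ where the period is not yet visible (which produce the ``for $l=1$'' subcases), and, in every surviving case, check back that Assumption~\ref{assumption} really fails and that the realized overlap length is $|w_n|-1$ or $|w_n|-2$ rather than smaller, so that the case genuinely falls under this Proposition. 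A minor but genuine point is the use of Lemma~\ref{Ldvenavic} for the pair $\bigl(w_n^{(0)},w_n^{(0,1)}\bigr)$, where $w_n^{(0,1)}$ need not be one of the prefixes generated by $\DT$; this is legitimate because the proof of that lemma uses only that one has two pseudopalindromic prefixes differing in length by $2$, an image of the shorter being a suffix of the longer, and the failure of Assumption~\ref{assumption} — all of which hold here.
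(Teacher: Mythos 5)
Your proposal follows essentially the same route as the paper: derive $w_{n+1}=w_n\,ab\,E_c(w_n)$ with $a,b,c$ pairwise distinct from the length condition, constrain $w_n$ via Observation~\ref{o_overpal} together with Lemma~\ref{Ljednonavic} (overlap $|w_n|-1$) and Lemma~\ref{Ldvenavic} (overlap $|w_n|-2$) applied to the missed prefix $w_n^{(0,1)}$, and then filter the finitely many candidates by requiring that $w_na$ have no non-empty $E_c$-palindromic suffix --- exactly the paper's exclusion criterion. The only differences are cosmetic: you describe rather than fully execute the final finite case check, and you make explicit (correctly) that Lemma~\ref{Ldvenavic} is being applied to the pair $(w_n,w_n^{(0,1)})$ even though $w_n^{(0,1)}$ is not one of the prefixes $w_m$, a point the paper uses silently.
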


\begin{proof}
Since $|w_{n+1}| = 2|w_n|+2$, it is easily seen that $w_{n+1}$ is of the form $w_n ij E_k(w_n)$, where $i, j, k$ are pairwise different letters. Without loss of generality, suppose that $w_{n+1} = w_n01E_2(w_n)$. Two cases are possible:
\begin{enumerate}
\item
The length of the overlap of $w_n^{(0)}$ and $w_n^{(1)}$ is equal to $|w_n|-1$.

If $|w_n| = 1$, the only possibility is $w_{n+1} \in \{0011, 2012\}$. Consider $|w_n|\geq 2$. By Observation \ref{o_overpal} and Lemma~\ref{Ljednonavic}, $w_n0$ is a prefix of $i^{\omega}$, $(ij)^{\omega}$, or $(ijk)^{\omega}$, i.e., either $w_n0=0^{l}$ and $w_{n+1}=0^{l}1^{l}$, or $w_n0$ is a prefix of $(i0)^{\omega}$, $(0i)^{\omega}$, or $w_n0$ is a prefix of $(ij0)^{\omega}$, $(i0j)^{\omega}$, or $(0ij)^{\omega}$. In the case where $w_n0$ is a prefix of $(ij0)^{\omega}$, $(i0j)^{\omega}$, or $(0ij)^{\omega}$, the longest $E_2$-palindromic prefix of $w_n0$ is clearly not an empty word, thus this case cannot happen. We will now address the remaining possibility: $w_n0$ is a prefix of $(i0)^{\omega}$ or $(0i)^{\omega}$. The case $i = 1$ cannot happen for the same reason as in the previous cases. Hence, only the case $i = 2$ remains possible. It leads to two possible forms of $w_{n+1}$: $w_n01E_2(w_n) = (20)^l(12)^l$ and $w_n01E_2(w_n) = (02)^l01(21)^l$.

Overall, we get $w_{n+1} = 0^{l}1^{l}$, $w_{n+1} = (20)^{l}(12)^{l}$, or $w_{n+1}=(02)^l01(21)^l$.
\item
The length of the overlap of $w_n^{(0)}$ and $w_n^{(1)}$ is equal to $|w_n|-2$.

If $|w_n| = 2$, then $w_{n+1} \in \{100110, 200112\}.$ Consider $|w_n|>2$. The word $w_n01$ has an image of $w_n$ as a suffix. Let $ij$ denote the factor preceding $01$. By Lemma \ref{Ldvenavic}, the following cases can happen:

\begin{enumerate}
\item If $ij = 01$, then $w_n01$ is a prefix of $(01)^{\omega}$ or $(10)^{\omega}$,
\item If $ij = 10$, then $w_n01$ is a prefix of $(0110)^{\omega}$ or $(1001)^{\omega}$,
\item If $ij = 12$, then $w_n01$ is a prefix of $(201)^{\omega}$, $(120)^{\omega}$, or $(012)^{\omega}$,
\item If $ij = 21$, then $w_n01$ is a prefix of $(1210)^{\omega}$ or $(1012)^{\omega}$,
\item If $ij = 20$, then $w_n01$ is a prefix of $(122001)^{\omega}$, $(200112)^{\omega}$, or $(011220)^{\omega}$,
\item If $ij = 02$, then $w_n01$ is a prefix of $(1020)^{\omega}$ or $(2010)^{\omega}$.
\end{enumerate}

The cases (a), (c), (d) are not possible because the longest $E_2$-palindromic suffix of $w_n0$ is not the empty word (in the case (a) and (d), $10$ is an $E_2$-palindromic suffix of $w_n0$, in the case (c), $120$ is an $E_2$-palindromic suffix of $w_n0$).

The case (b) happens only for $w_n01 = 1001$ ($w_{n+1} = 100110$). Otherwise, $1100$ is an $E_2$-palindromic suffix of the word $w_n0$.

Similarly, the case (e) occurs for $w_n01 \in \{122001, 2001\}$, it follows that $w_{n+1} \in \{1220011220, 200112\}$. Otherwise, $112200$ is an $E_2$-palindromic suffix of $w_n0$.

The case (f) can happen and leads to $w_{n+1} \in \{(1020)^l(1210)^l, (2010)^l2012(1012)^l\}$.
\end{enumerate}
\end{proof}

\begin{proposition}
\label{Ospecwiw}
Let $w_n$ and $w_{n+1}$ be the prefixes of $\mbu \DT$ from Definition \ref{Dpseudo} and $|w_{n+1}| = 2|w_n| + 1$. Suppose Assumption~\ref{assumption} does not hold. Furthermore, suppose the overlap of $w_n^{(0)}$ and $w_n^{(1)}$ is of length $|w_n| - 1$. Then $w_{n+1}$, $w_n$, and the missed pseudopalindromic prefix(es) are of the form:

\begin{table}[ht!]
\catcode`\-=12
\centering
\bgroup
\def\arraystretch{1.2}

\begin{tabular}{|l||l|l|}
\hline
$w_{n+1}$              & $w_n$       & Missed pseudopalindromic prefix(es) \\ \hline \hline
$iji$                  & $i$         & $ij$                       \\ \hline
$ijk$                  & $i$         & $ij$                       \\ \hline
$(ij)^li(ki)^l$        & $(ij)^l$    & $(ij)^li$                  \\ \hline
$(ij)^l ijk(jk)^l$     & $(ij)^li$   & $(ij)^{l+1}$               \\ \hline
$ijkij$                & $ij$        & $ijk, ijki$                \\ \hline
$(ijk)^{l-1} ijkji(kji)^{l-1}$ & $(ijk)^{l-1}ij$ & $(ijk)^l$              \\ \hline
$(ijk)^l iji(kji)^l$    & $(ijk)^l i$ & $(ijk)^l ij$               \\ \hline
$(ijk)^l i(kji)^l$     & $(ijk)^l$   & $(ijk)^l i$                \\ \hline
\end{tabular}

\egroup

\end{table}

\end{proposition}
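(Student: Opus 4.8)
The plan is to mirror the proof of Proposition~\ref{Ospecwijw}, now in the tighter situation $|w_{n+1}| = 2|w_n|+1$. First I pin down the shape of $w_{n+1}$. If $p$ is the longest $\vartheta_{n+1}$-palindromic suffix of $w_n\delta_{n+1}$, then $|w_{n+1}| = 2|w_n| + 2 - |p|$, so here $|p| = 1$, that is $p = \delta_{n+1}$; in particular $\vartheta_{n+1}(\delta_{n+1}) = \delta_{n+1}$, and since the only involutory antimorphisms fixing a given letter $c$ are $R$ and the exchange antimorphism fixing $c$, we get $\vartheta_{n+1}\in\{R,E_{\delta_{n+1}}\}$ and $w_{n+1} = w_n\,\delta_{n+1}\,\vartheta_{n+1}(w_n)$. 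The cases $|w_n|\le 2$ I settle by a direct enumeration of all short pseudopalindromes $w_n$ together with all admissible $\delta_{n+1}$, exactly as in Proposition~\ref{Ospecwijw}; they already account for the rows with $w_n\in\{i,ij\}$ (in particular the two rows $ijk$ and $ijkij$, which are not of a clean periodic form). From now on assume $|w_n|>2$.

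Next I constrain $w_n$. Since Assumption~\ref{assumption} fails and the overlap of $w_n^{(0)}$ and $w_n^{(1)}$ has length $|w_n|-1$, the image $w_n^{(1)}$ begins at the second letter of $w_{n+1}$, so $w_n^{(0,1)}$ coincides with the prefix $w_n\delta_{n+1}$ of $w_{n+1}$, which is a pseudopalindrome by Observation~\ref{o_overpal}. By Lemma~\ref{Ljednonavic}, up to relabelling the alphabet $w_n\delta_{n+1}$ is then a prefix of $i^\omega$, $(ij)^\omega$ or $(ijk)^\omega$. Reading off $w_n$ according to which of these it is a prefix of, and to the residue of $|w_n|+1$ modulo the period, pins $w_n$ (and $\delta_{n+1}$) down to one of the families $w_n=i^l$; $w_n\in\{(ij)^l,\,(ij)^li\}$; $w_n\in\{(ijk)^l,\,(ijk)^li,\,(ijk)^{l-1}ij\}$.

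Now I decide which of $R$, $E_{\delta_{n+1}}$ is admissible in each family, the constraint being that the longest $\vartheta_{n+1}$-palindromic suffix of $w_n\delta_{n+1}$ is just $\delta_{n+1}$. For this I compute the longest $R$-palindromic suffix and the longest $E$-palindromic suffix of each of $i^{l+1}$, $(ij)^li$, $(ij)^l$, $(ijk)^li$, $(ijk)^l$ and $(ijk)^lij$. The all-equal word $i^{l+1}$ is discarded at once. In the remaining families exactly one option survives for every $l\ge 2$: the words $(ij)^li$ and $(ijk)^li$ are themselves an $R$- and an $E$-palindrome respectively, killing one option, while $(ij)^l$, $(ijk)^l$ and $(ijk)^lij$ carry the ``hidden'' long palindromic suffixes $(ji)^{l-1}j$, $\ldots kijk$ and $\ldots jkij$ of the complementary type, killing the other. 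For $l=1$ two of these hidden suffixes shrink to a single letter, so there both antimorphisms survive; this is what generates the extra rows $iji$ and $ijkij$. Substituting the surviving $\vartheta_{n+1}$ into $w_{n+1}=w_n\delta_{n+1}\vartheta_{n+1}(w_n)$ and rewriting it periodically yields exactly the first two columns of the table.

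Finally, for each pair $(w_n,w_{n+1})$ obtained I list the missed pseudopalindromic prefixes, that is the pseudopalindromic prefixes of $w_{n+1}$ of length strictly between $|w_n|$ and $|w_{n+1}|$: in the $(ij)$-families there is a single one, of the form $(ij)^m$ or $(ij)^m i$, and in the $(ijk)$-families there are the appropriate one or two prefixes of $(ijk)^\omega$; one checks that the intermediate prefixes whose length has the ``wrong'' residue are not pseudopalindromes and that the total never exceeds two, in accordance with Theorem~\ref{Ttwopseudo}. This fills the third column and finishes the proof. The main obstacle is the third step: the bookkeeping of longest pseudopalindromic suffixes of the periodic words. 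The ``hidden'' long palindromic suffixes of the complementary type are precisely what separates the $R$-rows from the $E$-rows (for instance rows $5$ and $6$, or $iji$ and $ijk$), and the degenerate behaviour at $l=1$ has to be tracked carefully so as not to lose the short rows.
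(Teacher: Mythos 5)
Your proposal is correct and follows essentially the same route as the paper: it derives $w_{n+1}=w_n\delta_{n+1}\vartheta_{n+1}(w_n)$ with $\vartheta_{n+1}\in\{R,E_{\delta_{n+1}}\}$ from the central letter, uses Observation~\ref{o_overpal} together with Lemma~\ref{Ljednonavic} to force $w_n\delta_{n+1}$ to be a prefix of $i^\omega$, $(ij)^\omega$ or $(ijk)^\omega$, and then eliminates closure types by checking that the longest suitable pseudopalindromic suffix reduces to a single letter. The handling of the degenerate small cases (which produce the rows $iji$, $ijk$ and $ijkij$) likewise matches the paper's treatment.
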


\begin{proof}
Since $|w_{n+1}|=2|w_n|+1$, it directly follows that $w_{n+1}$ is either of the form $w_{n+1}=w_n j R(w_n)$ or $w_{n+1} = w_n j E_j(w_n)$ for some $j \in \{0, 1, 2\}$ because $j$ is a central factor of $w_{n+1}$. If $|w_n| = 1$, then $w_{n+1} \in \{iji, ijk\}$, where $i, j, k$ are pairwise different letters. Now, we can suppose that $|w_n| \geq 2$.

Without loss of generality, assume that $w_{n+1} = w_n0R(w_n)$, or $w_{n+1} = w_n0E_0(w_n)$.
By Observation \ref{o_overpal} and Lemma~\ref{Ljednonavic}, $w_n0$ is a prefix of $i^{\omega}$, $(ij)^{\omega}$, or $(ijk)^{\omega}$, i.e., $w_n0$ is a prefix of $0^{\omega}$, $(i0)^{\omega}$, $(0i)^{\omega}$, $(ij0)^{\omega}$, $(i0j)^{\omega}$, or $(0ij)^{\omega}$.

The first case cannot happen because the longest $E_0$- or $R$-palindromic suffix of $w_n0$ is $00$.

If $w_n0$ is a prefix of $(i0)^{\omega}$, or $(0i)^{\omega}$, then $w_{n+1} \in \{(0i)^l0(j0)^l), (i0)^li0j(0j)^l\}$ when we make an $E_0$-palindromic closure. The case of an $R$-palindromic closure cannot happen since the $R$-palindromic suffix is $0i0$.

If $w_n0$ is a prefix of $(ij0)^{\omega}$, $(i0j)^{\omega}$, or $(0ij)^{\omega}$, then, for the case of an $E_0$-palindromic closure, the only possibility is $w_n 0=ij0$, which leads to $w_{n+1}=ij0ij$ (for longer prefixes, $0ij0$ is an $E_0$-palindromic suffix of $w_n0$). In the $R$-palindromic closure case, we obtain $w_n0 \in \{(ij0)^{l-1}ij0ji(0ji)^{l-1}, (i0j)^li0i(j0i)^l, (0ij)^l0(ji0)^l\}$.
\end{proof}

\subsection{Missing one pseudopalindromic prefix}
Having solved special cases that can appear if Assumption~\ref{assumption} is not satisfied, we will restrict our attention to the cases where Assumption~\ref{assumption} holds. In this section, we will assume that we missed exactly one pseudopalindromic prefix and thus $w_n^{(0)}=w_n$, $w_n^{(0,1)}$, $w_n^{(0,2)} = w_{n+1}$ are the only pseudopalindromic prefixes between $w_n$ and $w_{n+1}$.

\begin{remark}
Through the whole section, $w_{n-1}^{(p)}$ denotes the prefix of length $|w_{n-1}|$ of $w_n^{(0)}$ (i.e., $w_{n-1}^{(p)}=w_{n-1}$), and $w_{n-1}^{(c)}$ denotes the prefix of length $|w_{n-1}|$ of $w_n^{(1)}$.
\end{remark}

\begin{lemma} \label{L_1_p}
Let $w_n = w_{n-1}p_1^{-1}\vartheta_1(w_{n-1})$ and $w_{n+1} = w_np_2^{-1}\vartheta_2(w_n)$, where $\vartheta_1, \vartheta_2 \in \{E_0, E_1, E_2, R\}$ and $p_1, p_2 \in \{0,1,2\}^*$. Furthermore, suppose that exactly one pseudopalindromic prefix was missed between $w_n$ and $w_{n+1}$ such that Assumption~\ref{assumption} holds and suppose that the prefix of length $n$ of the directive bi-sequence is normalized. Then $|p_1| = |p_2|$.
\end{lemma}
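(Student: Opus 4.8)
The plan is to convert the claim into the single length identity $g=|w_{n-1}|$, where $g$ is the common overlap length supplied by Lemma~\ref{Lstairs}, and then to prove this identity in two steps, the second being the real obstacle.

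\medskip
\noindent\emph{Reduction.} Since exactly one pseudopalindromic prefix is missed and Assumption~\ref{assumption} holds, Corollary~\ref{CstairsType} shows that $w_n^{(0)}=w_n$, $w_n^{(0,1)}$, $w_n^{(0,2)}=w_{n+1}$ are the only pseudopalindromic prefixes of $\mbu\DT$ between $w_n$ and $w_{n+1}$, and by Lemma~\ref{Lstairs} the consecutive images $w_n^{(0)}$, $w_n^{(1)}$, $w_n^{(2)}$ have a common overlap of some length $g<|w_n|$. They are therefore equally spaced inside $w_{n+1}$, so $|w_{n+1}|=3|w_n|-2g$, whence $|p_2|=2|w_n|-|w_{n+1}|=2g-|w_n|$; on the other hand $|p_1|=2|w_{n-1}|-|w_n|$ follows at once from $w_n=w_{n-1}p_1^{-1}\vartheta_1(w_{n-1})$. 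Thus $|p_1|=|p_2|$ is equivalent to $g=|w_{n-1}|$. Recall also that $w_n$ is a $\vartheta_1$-palindrome by Observation~\ref{o_overpal}.

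\medskip
\noindent\emph{The easy half: $g\le|w_{n-1}|$.} First I would show that $\vartheta_1(w_{n-1})$ is the longest \emph{proper} pseudopalindromic suffix of $w_n$. Let $y$ be a proper pseudopalindromic suffix of $w_n$. Then $\vartheta_1(y)$ is a proper prefix of $\vartheta_1(w_n)=w_n$, and it is a pseudopalindrome: if $y$ is a $\vartheta_1$-palindrome then $\vartheta_1(y)=y$, and otherwise Observation~\ref{o_natureofpalonpal} applies. By normalization of the length-$n$ prefix of $\DT$, every pseudopalindromic prefix of $\mbu\DT$ shorter than $w_n$ is one of $w_0,\dots,w_{n-1}$, hence has length at most $|w_{n-1}|$; so $|y|\le|w_{n-1}|$. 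Finally, by Observation~\ref{o_overpal} the overlap word of $w_n^{(0)}$ and $w_n^{(1)}$ is a pseudopalindromic suffix of $w_n$, of length $g<|w_n|$, hence $g\le|w_{n-1}|$.

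\medskip
\noindent\emph{The hard half: $g\ge|w_{n-1}|$.} This is the main obstacle. Assume for contradiction that $g<|w_{n-1}|$; then inside $w_{n+1}$ the occurrence of $w_n^{(1)}$ begins strictly to the right of the occurrence of $\vartheta_1(w_{n-1})$ which forms the right end of the prefix $w_n$, yet both lie inside the pseudopalindrome $w_n^{(0,1)}$, whose type $\sigma$ is the middle antimorphism of one of the triples $(R,E_i,R)$, $(E_i,R,E_i)$, $(E_i,E_j,E_k)$ of palindromic types of $(w_n^{(0)},w_n^{(0,1)},w_n^{(0,2)})$ from Corollary~\ref{CstairsType}. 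Reflecting the occurrence of $\vartheta_1(w_{n-1})$ through $w_n^{(0,1)}$ and using Observations~\ref{oEiEjEk} and~\ref{o_natureofpalonpal} to identify the antimorphism fixing it and its reflected copy, together with the facts that the only images of $w_n$ occurring in $w_n^{(0,1)}$ are $w_n^{(0)}$ and $w_n^{(1)}$ (by the equal spacing of Lemma~\ref{Lstairs}) and that the letter of $\mbu\DT$ at position $|w_n|$ is constrained both by the closure producing $w_n^{(0,1)}$ and by the start of $w_n^{(1)}$, I would reach a contradiction of one of two kinds: a pseudopalindromic prefix of $\mbu\DT$ strictly between $w_n$ and $w_n^{(0,1)}$, contradicting uniqueness of the missed prefix; or an extra image of $w_n$ inside $w_{n+1}$, contradicting Lemma~\ref{Lstairs}. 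I expect to have to treat the three triples from Corollary~\ref{CstairsType} separately, and the $E_i$--$E_j$--$E_k$ case (where Observation~\ref{oEiEjEk} is needed) is likely the most delicate point. Once $g=|w_{n-1}|$ is proved, $|p_1|=|p_2|$ is immediate.
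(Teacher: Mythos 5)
Your reduction to the single identity $g=|w_{n-1}|$ is correct, and your argument for the direction $g\le|w_{n-1}|$ (sending a proper pseudopalindromic suffix of $w_n$ to a pseudopalindromic prefix of the same length via $\vartheta_1$ and invoking normalization of the length-$n$ prefix) is sound, and is a genuinely different route from the paper's for that direction. The genuine gap is the other half, $g\ge|w_{n-1}|$: you only announce a strategy (``I would reach a contradiction of one of two kinds'', ``I expect to have to treat the three triples separately''), so as written the lemma is not proved. Moreover, the machinery you anticipate there is not needed: no case split over the triples of Corollary~\ref{CstairsType} and no use of Observation~\ref{oEiEjEk} is required.

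The missing step can be closed in a few lines, uniformly, and this is exactly how the paper argues both directions at once. Let $w_{n-1}^{(c)}$ denote the prefix of length $|w_{n-1}|$ of $w_n^{(1)}$; it is an image of $w_{n-1}$, being the image (under the antimorphism sending $w_n$ to $w_n^{(1)}$) of the suffix $\vartheta_1(w_{n-1})$ of $w_n$. Its occurrence in $w_{n+1}$ starts at position $|w_n|-g$, and from $g=|w_{n-1}|+\tfrac{1}{2}(|p_2|-|p_1|)$ one gets $g-(|w_n|-|w_{n-1}|)=\tfrac{1}{2}(|p_1|+|p_2|)>0$ whenever $|p_1|\neq|p_2|$, so this occurrence overlaps the prefix occurrence of $w_{n-1}$. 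By Observation~\ref{o_overpal}, the factor having $w_{n-1}$ as prefix and $w_{n-1}^{(c)}$ as suffix is a pseudopalindromic prefix of $\mbu\DT$ of length $|w_n|+|w_{n-1}|-g$. If $g<|w_{n-1}|$, this length lies strictly between $|w_n|$ and $|w_n^{(0,1)}|=2|w_n|-g$, contradicting the assumption that $w_n^{(0,1)}$ is the only missed pseudopalindromic prefix; this is your ``first kind'' of contradiction, obtained directly. (If $g>|w_{n-1}|$, the same factor is a pseudopalindromic prefix shorter than $w_n$ and longer than $w_{n-1}$, contradicting normalization; that is the paper's version of your easy half.) You should replace the plan for the hard half by this argument, or else actually carry out your reflection argument to completion.
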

\begin{proof}
By Lemma \ref{Lstairs}, we know that $w_n^{(1)}$ is a central factor of $w_{n+1}$. We will proceed by contradiction:
\begin{enumerate}
	\item Assume $|p_1| > |p_2|$. For a better understanding see Figure \ref{Ooverlappings} (a). Then $w_{n-1}^{(p)}$ and $w_{n-1}^{(c)}$ overlap and the factor having $w_{n-1}^{(p)}$ as prefix and $w_{n-1}^{(c)}$ as suffix is a $\vartheta$-palindrome by Observation \ref{o_overpal}. Moreover, it is a $\vartheta$-palindromic prefix of $w_{n+1}$ longer than $w_n$ and shorter than $w_n^{(0,1)}$, which is a contradiction.
		\begin{figure}[ht!]
		\begin{center}
			\includegraphics[scale=1.1]{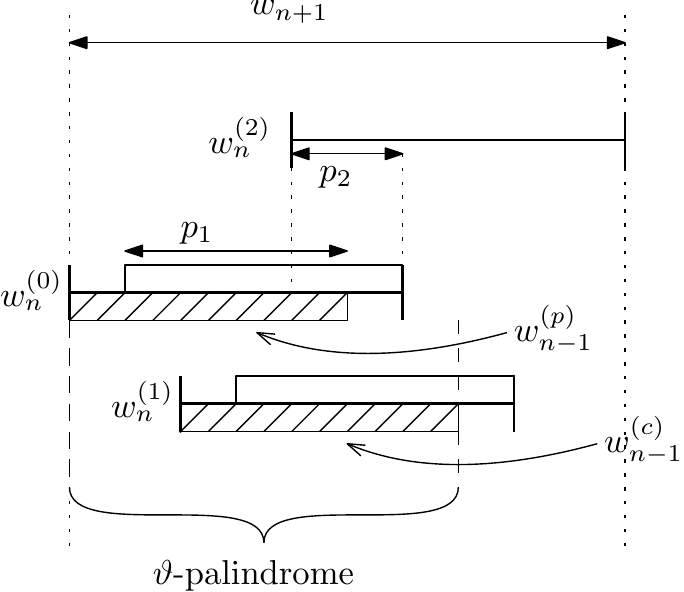}
            \includegraphics[scale=1.1]{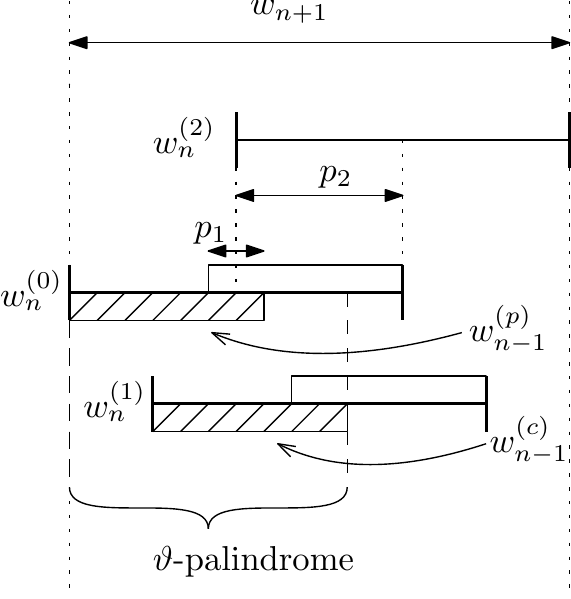}
		\end{center}
		\caption{(a) \ Illustration of the case $|p_1| > |p_2|$. \quad (b) \ Illustration of the case $|p_1| < |p_2|$.}
		\label{Ooverlappings}
	\end{figure}

	\item Assume $|p_1| < |p_2|$, see Figure \ref{Ooverlappings} (b). Again, $w_{n-1}^{(p)}$ and $w_{n-1}^{(c)}$ overlap and the factor having $w_{n-1}^{(p)}$ as prefix and $w_{n-1}^{(c)}$ as suffix is a $\vartheta$-palindrome for the same reason as above. However, in this case, the obtained $\vartheta$-palindromic prefix of $w_{n+1}$ is shorter than $w_n$, which is a contradiction with the fact that the prefix of length $n$ of the directive bi-sequence is normalized.

%
\end{enumerate}
\end{proof}
\clearpage

\begin{proposition} \label{L_1_1}
Suppose that exactly one pseudopalindromic prefix was missed between $w_n$ and $w_{n+1}$ such that Assumption~\ref{assumption} holds and suppose that the prefix of length $n$ of the directive bi-sequence is normalized. Then:
\begin{enumerate}
\item
If $w_n = w_{n-1}i\vartheta_1(w_{n-1})$ for some $i \in \{0,1,2\}$ and $\vartheta_1 \in \{E_0,E_1,E_2,R\}$, then $w_{n+1} = w_nj\vartheta_2(w_n)$ for some $j \in \{0,1,2\}$ and $\vartheta_2 \in \{E_0,E_1,E_2, R\}$.
\item
If $w_{n+1} = w_nj\vartheta_2(w_n)$ for some $j \in \{0,1,2\}$ and $\vartheta_2 \in \{E_0,E_1,E_2,R\}$, then either  $w_n = w_{n-1}i\vartheta_1(w_{n-1})$ for some $i \in \{0,1,2\}$ and $\vartheta_1 \in \{E_0,E_1,E_2,R\}$, or $w_{n+1}$ and $w_n$ are of one of the following forms for $l \geq 1$ ($w_n=w_{n-1}a^{-1}\vartheta_1(w_{n-1}$) for some $a \in \{0,1,2\}$):

\begin{table}[ht!]
\catcode`\-=12
\centering
\bgroup
\def\arraystretch{1.2}

\begin{tabular}{l|l||l|l|}
\cline{2-4}
\multicolumn{1}{c|}{}& $w_{n+1}$              		& $w_n$       		& Missed pseudopal. prefix\\
\hhline{~===}
a.&$(ijk)^lk^{-1}(kji)^lk(ijk)^lk^{-1}(kji)^l$   & $(ijk)^lk^{-1}(kji)^l$         & $(ijk)^lk^{-1}(kji)^lk(ijk)^lk^{-1}$                   \\ \cline{2-4}
b.&$(ijk)^li(kji)^lk(ijk)^li(kji)^l$                & $(ijk)^li(kji)^l$        & $(ijk)^li(kji)^lk(ijk)^l$                \\ \cline{2-4}
c.&\makecell[lr]{$(ijk)^{l-1} iji(kji)^{l-1} k(ijk)^{l-1} iji\ldots$\\$(kji)^{l-1}$}                  & $(ijk)^{l-1} iji(kji)^{l-1}$         & \makecell[lr]{$(ijk)^{l-1} iji(kji)^{l-1} k\ldots$ \\ $(ijk)^{l-1} i$  }                     \\ \cline{2-4}
d.&\makecell[lr]{$i(ji)^{l-1} jk(jk)^{l-1} j(ij)^{l-1} ij\ldots$\\$(kj)^{l-1} k$}       & $i(ji)^{l-1} jk(jk)^{l-1}$    & $i(ji)^{l-1} jk(jk)^{l-1} j(ij)^{l-1} i$                  \\ \cline{2-4}
e.&$(ij)^l i(ki)^l k(jk)^l j(ij)^l$    & $(ij)^l i(ki)^l$   & $
(ij)^l i(ki)^l k(jk)^l$               \\ \cline{2-4}
\end{tabular}

\egroup

\end{table}

\end{enumerate}
\end{proposition}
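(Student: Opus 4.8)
The plan is to analyze the two statements separately, in both cases exploiting Lemma~\ref{L_1_p}, which forces $|p_1|=|p_2|$ whenever exactly one pseudopalindromic prefix is missed, Assumption~\ref{assumption} holds, and the length-$n$ prefix of the directive bi-sequence is normalized. For statement~(1), I would argue by contraposition on the negation of the conclusion: suppose $w_{n+1}=w_np_2^{-1}\vartheta_2(w_n)$ with $|p_2|\geq 1$. Writing $w_n=w_{n-1}p_1^{-1}\vartheta_1(w_{n-1})$, Lemma~\ref{L_1_p} gives $|p_1|=|p_2|\geq 1$, so $w_n$ is itself obtained by a \emph{non-trivial} pseudopalindromic closure. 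The goal is then to show that in this regime the ``staircase'' of images of $w_n$ inside $w_{n+1}$ (governed by Lemma~\ref{Lstairs} and Corollary~\ref{CstairsType}) is rigid enough that $w_{n-1}^{(p)}$ and $w_{n-1}^{(c)}$ overlap in $w_n^{(0,1)}$, producing an extra pseudopalindromic prefix strictly between $w_n$ and $w_n^{(0,1)}$ or the overlap extends so that $w_n^{(0,1)}$ coincides with $w_{n+1}$ and nothing was actually missed — either way contradicting the hypothesis that exactly one prefix was missed. So the only surviving possibility is $|p_2|=0$, i.e., $w_{n+1}=w_nj\vartheta_2(w_n)$.

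For statement~(2), assume $w_{n+1}=w_nj\vartheta_2(w_n)$ (so $|p_2|=0$) and that $w_n=w_{n-1}p_1^{-1}\vartheta_1(w_{n-1})$ is \emph{not} of the form $w_{n-1}i\vartheta_1(w_{n-1})$; by Lemma~\ref{L_1_p} this means $|p_1|=|p_2|=0$ is impossible, but wait — here $|p_2|=0$, so Lemma~\ref{L_1_p} would force $|p_1|=0$, contradicting $|p_1|\geq 1$. Hence Lemma~\ref{L_1_p} does \emph{not} apply, which can only be because its hypotheses fail, and the only one that can fail is that \emph{Assumption~\ref{assumption} does not hold for the passage $w_{n-1}\to w_n$} — equivalently, $w_{n-1}$ and its images sit too far apart inside $w_n$. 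So the strategy is: the pair $(w_{n-1},w_n)$ is exactly one of the special configurations catalogued in Propositions~\ref{Ospecwijw} and~\ref{Ospecwiw} (the cases where Assumption~\ref{assumption} fails and a prefix is missed), and then one must check, for each such $(w_{n-1},w_n)$, which letter $j$ and antimorphism $\vartheta_2$ can be appended so that exactly one pseudopalindromic prefix is missed between $w_n$ and $w_{n+1}$ while Assumption~\ref{assumption} holds at level $n$. Carrying this out, using Observation~\ref{o_natureofpalonpal} to track the alternating $R/E_i/E_j/E_k$ pattern of the missed prefix, yields precisely the five families (a)--(e) in the table; the periodic words $(ijk)^\omega$, $(ij)^\omega$ and their relatives in Lemma~\ref{Ldvenavic} are exactly the ambient words in which these configurations live, so the shapes are forced.

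Concretely, for each row one identifies $w_{n-1}$, computes $w_n$ as the relevant pseudopalindromic closure (matching one of the entries of the tables in Propositions~\ref{Ospecwijw}/\ref{Ospecwiw}, e.g. $w_n=(ijk)^l i(kji)^l$ coming from $w_{n-1}=(ijk)^l i$ in Proposition~\ref{Ospecwiw}), then determines the longest $\vartheta_2$-palindromic suffix of $w_nj$ for each admissible $\vartheta_2$, checks that the resulting $w_{n+1}$ misses exactly one pseudopalindrome and that this time the images of $w_n$ genuinely overlap (Assumption~\ref{assumption} at level $n$), and reads off $w_{n+1}$ and the missed prefix. Rows~(a),(b) correspond to $w_{n-1}$ of ``ternary-staircase'' type $(ijk)^l\cdots$, rows~(c),(d),(e) to the mixed-period types; the $k^{-1}$ in row~(a) records that $p_1$ has length one. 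The routine but lengthy bookkeeping — enumerating the four antimorphisms against each of the roughly eight configurations and discarding those that miss zero or two prefixes — is the main obstacle; the conceptual content is entirely supplied by Lemmas~\ref{Lstairs},~\ref{L_1_p}, Observations~\ref{o_overpal},~\ref{o_natureofpalonpal}, and Propositions~\ref{Ospecwijw},~\ref{Ospecwiw}, so the argument is a finite, if tedious, case check.
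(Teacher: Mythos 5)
Your overall architecture (reduce to Lemma~\ref{L_1_p}-type overlap arguments, then a finite case check for the exceptional families) resembles the paper's, but the route you take to the five special families in part~(2) rests on a faulty inference. You argue: Lemma~\ref{L_1_p} would force $|p_1|=|p_2|=0$, it cannot, hence its hypotheses fail, hence Assumption~\ref{assumption} fails for the passage $w_{n-1}\to w_n$, hence $(w_{n-1},w_n)$ is one of the configurations of Propositions~\ref{Ospecwijw} and~\ref{Ospecwiw}. This breaks at two points. First, the reason Lemma~\ref{L_1_p} does not apply is simply that $w_{n+1}=w_nj\vartheta_2(w_n)$ is not of the form $w_np_2^{-1}\vartheta_2(w_n)$ (its length is $2|w_n|+1$, not $2|w_n|-|p_2|$), so nothing about the level $n-1$ can be inferred from its failure. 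Second, and more seriously, Propositions~\ref{Ospecwijw} and~\ref{Ospecwiw} catalogue pairs for which a pseudopalindromic prefix \emph{was missed} between $w_{n-1}$ and $w_n$; this is excluded by the standing hypothesis that the prefix of length $n$ of the directive bi-sequence is normalized, so those tables cannot be the source of the pairs $(w_{n-1},w_n)$ here. The paper's actual derivation is different: after excluding $|p_1|\geq 3$ (which would force two missed prefixes via the overlap of $w_{n-1}^{(p)}$ and $w_{n-1}^{(c)}$) and even $|p_1|$ (parity, since an image of $w_n$ is a central factor of $w_{n+1}$), the only non-generic case left is $w_n=w_{n-1}a^{-1}\vartheta_1(w_{n-1})$. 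There the overlap $p$ of consecutive images of $w_n$ in $w_{n+1}$ has length $|w_{n-1}|-1$, is itself a pseudopalindrome by Observation~\ref{o_overpal}, and $pj$ is an image of $w_{n-1}$, hence also a pseudopalindrome; Lemma~\ref{Ljednonavic} then forces $pj$ to be a prefix of $a^{\omega}$, $(ab)^{\omega}$ or $(abc)^{\omega}$, and the explicit formula $w_{n+1}=\vartheta_1(p)ipj\vartheta_2(p)\vartheta_2(i)\vartheta_2\vartheta_1(p)$ together with the antimorphism triples of Corollary~\ref{CstairsType} yields rows a--e by a finite check. Your sketch never identifies $p$ or invokes Lemma~\ref{Ljednonavic} at this level, which is the key idea.

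There is also a smaller gap in part~(1): you write $w_n=w_{n-1}p_1^{-1}\vartheta_1(w_{n-1})$ and apply Lemma~\ref{L_1_p} to conclude $|p_1|=|p_2|$, but the hypothesis of part~(1) is $w_n=w_{n-1}i\vartheta_1(w_{n-1})$, which is not of that form, so the lemma's statement is not applicable; one must rerun the overlap argument from its \emph{proof}. Moreover your contraposition only treats $|p_2|\geq 1$, whereas the negation of the conclusion also contains $w_{n+1}=w_n\vartheta_2(w_n)$ and $w_{n+1}=w_nkl\vartheta_2(w_n)$ (excluded in the paper by parity of $|w_n|$) and the borderline case $|p_2|=1$, which needs its own argument (it forces $w_{n-1}i$ to be a pseudopalindromic prefix, contradicting normalization). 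These cases are not ``either way contradicting the hypothesis'' without separate treatment.
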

\begin{proof}
\begin{enumerate}
	\item If $w_{n-1}=\varepsilon$, then $w_n=i$ and $w_{n+1}\in\{iji, ijk\}$. In the sequel, assume $|w_{n-1}|\geq 1$. First, suppose that $w_n = w_{n-1}i\vartheta_1(w_{n-1})$ and $w_{n+1} = w_n p_2^{-1} \vartheta_2 (w_n)$ for some $p_2 \in \{0,1,2\}^*$, $|p_2| \geq 2.$ Then $w_{n-1}^{(p)}$ and $w_{n-1}^{(c)}$ overlap and, similarly as in the proof of Lemma \ref{L_1_p}, it is a contradiction with the fact that the prefix of length $n$ of the directive bi-sequence is normalized. Therefore, this case is not possible. Moreover, the length of $p_2$ has to be odd because $w_n^{(1)}$ is a central factor of $w_{n+1}$ and the length of $w_n$ is odd. Hence, $|p_2|$ is either $1$, or $w_{n+1} = w_nj\vartheta_2(w_n)$ for some $j \in \{0,1,2\}$.
	
	If $|p_2|=1$, then $w_{n-1}^{(c)}$ is a~prefix of the suffix $i\vartheta_1 (w_{n-1})$ of $w_n$. Since $w_n$ is a pseudopalindrome, then an image of $w_{n-1}$ is also a suffix of $w_{n-1}i$, see Figure \ref{OLSpec11}. This implies that $w_{n-1}i$ is a pseudopalindrome and this is a contradiction with the fact that the prefix of length $n$ of the directive bi-sequence is normalized. Thus, only the case $w_{n+1} = w_n j \vartheta_2(w_n)$ remains possible.
	\begin{figure}[ht!]
			\begin{center}
				\includegraphics[scale=1.1]{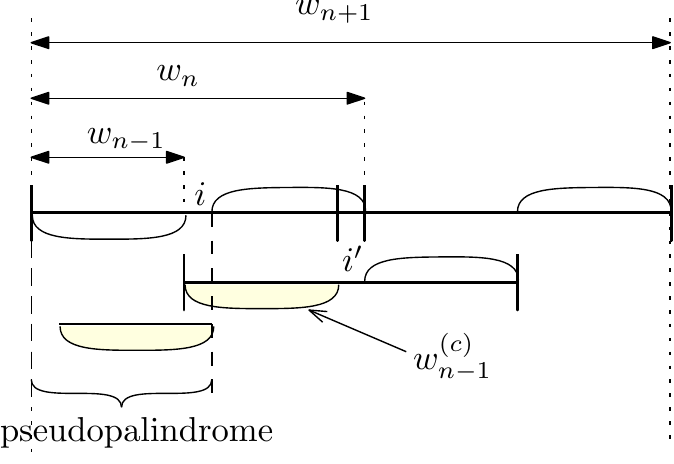}
			\end{center}
			\caption{Illustration of the case $|p_2|=1$.}
			\label{OLSpec11}
		\end{figure}
	
	\item We have now $w_{n+1} = w_nj\vartheta_2(w_n)$. We will consider all different possible lengths for $w_{n-1}$. First, suppose that $|w_{n-1}|= 0$. Then $w_n = i$ (since the length of $w_n$ is odd) and $w_{n+1} \in  \{iji, ijk\}$. Second, $|w_{n-1}| = 1$. Then $w_n \in \{iji, ijk\}$ because the length of $w_n$ is odd. But at the same time, $w_n \not \in \{iji, ijk\}$ because the prefix of length $n$ of the directive bi-sequence is normalized. From now on, suppose $|w_{n-1}| \geq 2$.
	
	Assume now that $w_n = w_{n-1}p_1^{-1} \vartheta_1 (w_{n-1})$  for some $p_1 \in \{0,1,2\}^*$, where $|p_1| \geq 2$. Then, as in the first part of the proof, the factors $w_{n-1}^{(p)}$ and $w_{n-1}^{(c)}$ overlap and two pseudopalindromic prefixes were missed between $w_n$ and $w_{n+1}$, which is not possible. Since the length of $p_1$ is odd, only the cases where $w_n = w_{n-1} i^{-1} \vartheta_1(w_{n-1})$ and $w_n = w_{n-1} i \vartheta_1(w_{n-1})$ remain.

	We will derive the remaining forms of $w_{n+1}$ from the case $w_n = w_{n-1} i^{-1} \vartheta_1(w_{n-1})$. Suppose that $w_n^{(0)}=w_n$, $w_n^{(0,1)}$, and $w_n^{(0,2)}=w_{n+1}$ are in order a $\vartheta_1$-, $\vartheta$-, and $\vartheta_2$-palindromes. Let $p$ denote the overlap of $w_n^{(0)}$ and $w_n^{(1)}$. See Figure \ref{OLSpec12} for a better understanding. It is easily seen that the length of $p$ is equal to $|w_{n-1}| - 1$ and that $p$ is a $\vartheta$-palindrome by Observation \ref{o_overpal}. Moreover, $pj$ is a pseudopalindrome, too ($pj$ is an image of $w_{n-1}$). Hence Lemma \ref{Ljednonavic} is applicable: $pj$ is a prefix of either $a^{\omega}$, $(ab)^{\omega}$, or $(abc)^{\omega}$ for some pairwise different $a,b,c \in \{0,1,2\}$.
	
			\begin{figure}[ht!]
			\begin{center}
				\includegraphics[scale=1.1]{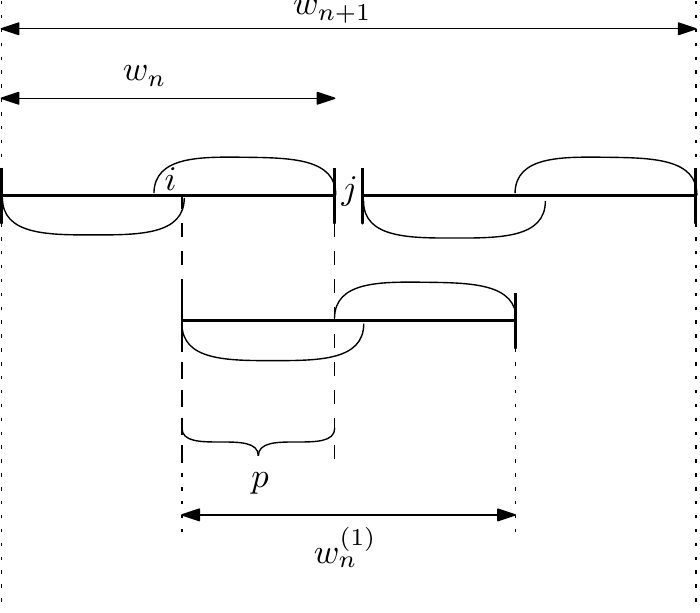}
			\end{center}
			\caption{Illustration of the overlap $p$ for $w_n = w_{n-1} i^{-1} \vartheta_1(w_{n-1})$.}
			\label{OLSpec12}
		\end{figure}
	
	Furthermore, $w_{n+1}$ clearly satisfies the following equation:
	
	\begin{equation}
	\label{eq:tipjtittp}
		w_{n+1} = \vartheta_1(p)ipj\vartheta_2 (p) \vartheta_2 (i) \vartheta_2 \vartheta_1 (p).
	\end{equation}

	By Corollary \ref{CstairsType}, we know that  $\vartheta_1$, $\vartheta$, and $\vartheta_2$ are either successively $R$, $E_m$, $R$, or $E_m$, $R$, $E_m$, or $E_m$, $E_r$, $E_s$. The different cases will be addressed in the sequel:

	\begin{itemize}
	\item $\vartheta_1 = R$, $\vartheta = E_m$, $\vartheta_2 = R$:
	
	Using Equation \eqref{eq:tipjtittp}, we obtain:
	\begin{equation}
	\label{eq:ripjrip}
		w_{n+1} = R(p)ipjR(p)ip,
	\end{equation}
	
where $i,j,m$ are either the same or pairwise different letters.

If $pj$ is a prefix of $a^{\omega}$, then $a=j$ and $pj$ is a palindromic suffix of $w_nj$, which is a contradiction with $w_{n+1} = w_njR(w_n)$ and $|w_n| \geq 2$.

If $pj$ is a prefix of $(ab)^{\omega}$, then, since $p$ is an $E_m$-palindrome, the letters $i,j,m$ are pairwise different and $p=(ji)^l$, which is also a contradiction with the fact that $w_{n+1} = w_{n}jR(w_n)$.

If $pj$ is a prefix of $(abc)^{\omega}$, then two cases can occur:
		\begin{enumerate}
			\item $i = j = m$, then $p = a(bja)^{l-1} b$ (it is easy to see that other cases are not possible), and thus, using \eqref{eq:ripjrip}, we obtain:
		$$w_{n+1} = b(ajb)^{l-1} aja(bja)^{l-1} bjb (ajb)^{l-1} aja(bja)^{l-1} b.$$
		By simplifying the form of $w_{n+1}$ and changing the letters to $i,j,k$ so that they appear in this given order, we obtain the form 2.a. of $w_{n+1}$.
		
			\item $i,j,m$ are pairwise different, then, by the same approach, we obtain:
			\begin{itemize}
				\item $p = (jmi)^l$ and $w_{n+1}=(imj)^l i(jmi)^l j(imj)^l i(jmi)^l$, which leads to the form 2.b. of $w_{n+1}$.			
			
				\item $p = m(jim)^{l-1}$, $w_{n+1}=(mij)^{l-1} mim (jim)^{l-1} j(mij)^{l-1} mim (jim)^{l-1}$, which gives the form 2.c. of $w_{n+1}$.			
			\end{itemize}
		\end{enumerate}

	\item $\vartheta_1 = E_m$, $\vartheta = R$, $\vartheta_2 = E_m$:
	
	Using \eqref{eq:tipjtittp}, we obtain:
	
	\begin{equation}
	\label{eq:eipjeip}
		w_{n+1} = E_m(p)ipjE_m(p)E_m(i)p = E_j(p)jpjE_j(p)jp,
	\end{equation}
	where $p$ is an $R$-palindrome. We used the fact that $j$ is a central factor of an $E_m$-palindrome and that $ipj$ is a central factor of an $R$-palindrome in order to derive the latter form.
	
	If $pj$ is a prefix of $a^{\omega}$, then $a=j$ and $pj$ is an $E_j$-palindromic suffix of $w_nj$ longer than $j$, which is a contradiction with $w_{n+1} = w_n j E_j(w_n)$.
	
	If $pj$ is a prefix of $(ab)^{\omega}$, then using the fact that $p=R(p)$, we have $p = a(ja)^{l-1}$. Applying \eqref{eq:eipjeip}, we obtain
	$$ w_{n+1} = b(jb)^{l-1} ja(ja)^{l-1} j b(jb)^{l-1}ja(ja)^{l-1}, $$
	which leads to the form 2.d. of $w_{n+1}$.
	
	If $pj$ is a prefix of $(abc)^{\omega}$, then $p$ cannot be an $R$-palindrome.
	
	\item $\vartheta_1 = E_m$, $\vartheta = E_r$, $\vartheta_2 = E_s$:
	
	Now, we obtain by \eqref{eq:tipjtittp}
	\begin{equation}
	\label{eipjekeep}
	w_{n+1} = E_m(p)ipjE_s(p) E_s(i) E_sE_m(p) = E_i(p) i p j E_j(p) r E_i(p),
	\end{equation}	
	where we used the fact that $j$ is a central factor of an $E_s$-palindrome and $ipj$ is a central factor of an $E_r$-palindrome.
	
	If $pj$ is a prefix of $a^{\omega}$, then it is a contradiction with the fact that $p$ is an $E_r$-palindrome with $r \neq j$.
	
	If $pj$ is a prefix of $(ab)^{\omega}$, then $p = (ji)^l$ because $p$ is an $E_r$-palindrome and
	$$w_{n+1} = (ir)^l i (ji)^l j (rj)^l r (ir)^l,$$ which corresponds to the form 2.e. of $w_{n+1}$.
	
	And finally, if $pj$ is a prefix of $(abc)^{\omega}$, then since $p$ is an $E_r$-palindrome, it can be of the form $p = (jri)^l$ or $p = r(jir)^l$. Neither of them is possible because $j$ is not the longest $E_j$-palindromic suffix of the resulting $w_nj$.
	\end{itemize}
	
\end{enumerate}
\end{proof}

\begin{proposition} \label{L_1_2}
Suppose that exactly one pseudopalindromic prefix was missed between $w_n$ and $w_{n+1}$ such that Assumption~\ref{assumption} holds and suppose that the prefix of length $n$ of the directive bi-sequence is normalized. Then:
\begin{enumerate}
\item
If $w_n = w_{n-1}ij\vartheta_1(w_{n-1})$ for two different $i, j \in \{0,1,2\}$ and $\vartheta_1 \in \{E_0, E_1, E_2, R\}$, then $w_{n+1} = w_nkl\vartheta_2(w_n)$ for two different $k,l \in \{0, 1, 2\}$ and $\vartheta_2 \in \{E_0, E_1, E_2, R\}$.
\item
If $w_{n+1} = w_nkl\vartheta_2(w_n)$ for two different $k,l \in \{0,1,2\}$ and $\vartheta_2 \in \{E_0, E_1, E_2, R\}$, then either $w_n = w_{n-1}ij\vartheta_1(w_{n-1})$ for two different $i,j \in \{0, 1, 2\}$ and $\vartheta_1 \in \{E_0, E_1, E_2, R\}$, or $w_{n+1}$ is of one of the following forms for $l\geq 1$ ($w_n=w_{n-1}\vartheta_1(w_{n-1})$ in the first four cases and $w_n=w_{n-1}(ab)^{-1}\vartheta_1(w_{n-1})$ for some $a,b \in \{0,1,2\}$ in the last two cases):

\begin{table}[ht!]
\catcode`\-=12
\centering
\bgroup
\def\arraystretch{1.2}

\begin{tabular}{l|l||l|l|}
\cline{2-4}
\multicolumn{1}{c|}{}& $w_{n+1}$              		& $w_n$       		& Missed pseudopal. prefix\\
\hhline{~===}
a.&$i^lj^{l+1}i^{l+1}j^l$                		& $i^lj^l$         & $i^lj^{l+1}i^l$                       \\ \cline{2-4}
b.&$i(ji)^ljk(ik)^likji(ji)^ljk(ik)^li$      	& $i(ji)^ljk(ik)^li$        & $i(ji)^ljk(ik)^likji(ji)^l$                \\ \cline{2-4}
c.&$i^lj^{l+1}k^{l+1}i^{l}$                 	& $i^lj^l$         & $i^lj^{l+1}k^l$                       \\ \cline{2-4}
d.&$(ij)^l ik(jk)^l ji(ki)^l kj(ij)^l$      	& $(ij)^l ik (jk)^l$    & $(ij)^l ik(jk)^l ji(ki)^l$                  \\ \cline{2-4}
e.&\makecell[lr]{$(ijkj)^{l-1} ijki(kjki)^{l-1} kj \ldots$ \\ $\ldots (ijkj)^{l-1}ijki(kjki)^{l-1}$}    & \makecell[lr]{$(ijkj)^{l-1} ijki\ldots$ \\ $\ldots (kjki)^{l-1}$}   & \makecell[lr]{$(ijkj)^{l-1} ijki(kjki)^{l-1}\ldots$ \\ $ \ldots kj(ijkj)^{l-1} i$  }             \\ \cline{2-4}
f.&\makecell[lr]{$ij(kjij)^{l-1} kjik(ijik)^{l-1} iji\ldots$\\ $\ldots kjk(ikjk)^{l-1}ikji(jkji)^{l-1} jk$ }       & \makecell[lr]{$ij(kjij)^{l-1} kjik\ldots$ \\ $\ldots(ijik)^{l-1} ij$ }  & \makecell[lr]{$ij(kjij)^{l-1} kjik (ijik)^{l-1}\ldots$ \\$\ldots  iji kjk(ikjk)^{l-1} i$} \\ \cline{2-4}
\end{tabular}

\egroup

\end{table}

\end{enumerate}
\end{proposition}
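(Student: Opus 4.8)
The plan is to reproduce, with the obvious adaptations, the combined strategy used for Lemma~\ref{L_1_p} and Proposition~\ref{L_1_1}. Two standing facts drive the whole argument: under Assumption~\ref{assumption}, Lemma~\ref{Lstairs} makes $w_n^{(1)}$ a central factor of $w_{n+1}$, so $|w_{n+1}|\equiv|w_n|\pmod 2$ and the common overlap length of consecutive images of $w_n$ in $w_{n+1}$ is determined by the shape of $w_{n+1}$; and any pseudopalindromic prefix of $w_{n+1}$ of length strictly between $|w_{n-1}|$ and $|w_n|$ contradicts that the length-$n$ prefix of the bi-sequence is normalized, while one strictly between $w_n$ and $w_n^{(0,1)}$ contradicts that exactly one prefix is missed.

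For part~1 I would assume $w_n=w_{n-1}ij\vartheta_1(w_{n-1})$ with $i\neq j$; then $\vartheta_1\in\{E_0,E_1,E_2\}$ with $\vartheta_1(i)=j$, and $|w_n|=2|w_{n-1}|+2$ is even. After treating $|w_{n-1}|\leq 1$ by inspection, for $|w_{n-1}|\geq 2$ suppose $w_{n+1}$ is not of the form $w_nkl\vartheta_2(w_n)$; then either $w_{n+1}=w_nc\,\vartheta_2(w_n)$ with a single letter $c$, which is impossible since then $|w_{n+1}|$ would be odd, or $w_{n+1}=w_np_2^{-1}\vartheta_2(w_n)$ with $|p_2|\geq 0$ and $|p_2|$ even. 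Using Lemma~\ref{Lstairs} to locate $w_n^{(1)}$ and its length-$|w_{n-1}|$ prefix $w_{n-1}^{(c)}$, one finds: for $|p_2|\geq 4$ the factors $w_{n-1}^{(p)}$ and $w_{n-1}^{(c)}$ overlap, so by Observation~\ref{o_overpal} the spanning factor is a pseudopalindromic prefix of $w_{n+1}$ that is either a second missed prefix or shorter than $w_n$, a contradiction in both directions exactly as in Lemma~\ref{L_1_p}; and the residual values $|p_2|\in\{0,2\}$ are ruled out by an argument of the same flavour as at the end of the proof of Proposition~\ref{L_1_1}(1), where the position of $w_{n-1}^{(c)}$ forces $w_{n-1}i$, $w_{n-1}ij$, or another prefix of $w_n$ of length in the open interval $(|w_{n-1}|,|w_n|)$ to be a pseudopalindrome, again contradicting the normality of the length-$n$ prefix. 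Hence $w_{n+1}=w_nkl\vartheta_2(w_n)$, and $k\neq l$ is automatic: if $\vartheta_2(k)=k$ then $k$ alone would be a nonempty $\vartheta_2$-palindromic suffix of $w_n\delta_{n+1}$ and only one letter would be inserted.

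For part~2, from $w_{n+1}=w_nkl\vartheta_2(w_n)$ we get $|w_{n+1}|=2|w_n|+2$, hence the common overlap length equals $(|w_n|-2)/2$ and $|w_n|$ is even; this excludes the one-inserted-letter form of $w_n$ and every odd $|p_1|$ (in particular $|p_1|=1$), and the overlap argument of part~1 excludes $|p_1|\geq 4$. The surviving possibilities for $w_n$ are $w_{n-1}ij\vartheta_1(w_{n-1})$ (for which there is nothing to prove, the two inserted letters being then distinct for the reason given above), $w_{n-1}\vartheta_1(w_{n-1})$ ($|p_1|=0$), and $w_{n-1}(ab)^{-1}\vartheta_1(w_{n-1})$ ($|p_1|=2$). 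For the latter two I would let $p$ be the common overlap of consecutive images of $w_n$ in $w_{n+1}$ (of length $|w_{n-1}|-1$, respectively $|w_{n-1}|-2$), note that $p$ is a pseudopalindrome by Observation~\ref{o_overpal}, that the suffix of $w_n$ of length $|w_{n-1}|$ is $\vartheta_1(w_{n-1})$, and that $pk$, respectively $pkl$, equals the length-$|w_{n-1}|$ prefix $w_{n-1}^{(c)}$ of $w_n^{(1)}$, which is itself a pseudopalindrome. Lemma~\ref{Ljednonavic} then forces $pk$ to be a prefix of $a^\omega$, $(ab)^\omega$, or $(abc)^\omega$, and the two-letter reasoning from the proof of Lemma~\ref{Ldvenavic} forces $pkl$ to be a prefix of one of the eight words listed there. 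Writing an explicit expression for $w_{n+1}$ in terms of $p$, the letters $i,j,k$, and the antimorphisms, in the spirit of~\eqref{eq:tipjtittp}, and using Corollary~\ref{CstairsType} to restrict $(\vartheta_1,\vartheta,\vartheta_2)$ to $(R,E_m,R)$, $(E_m,R,E_m)$, or $(E_m,E_r,E_s)$, I would then run through every resulting combination; in each one either some claimed maximal pseudopalindromic suffix is not actually maximal, or a shorter pseudopalindromic prefix appears and contradicts normality, or, after relabelling so that the letters occur in the order $i,j,k$, the word $w_{n+1}$ takes exactly one of the forms 2.a--2.f.

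The main obstacle is simply the size of the case analysis in part~2: two overlap lengths, three antimorphism triples, three (respectively eight) periodic shapes for $p$, and inside several of these a further split according to which of $i,j,m$ (respectively $a,b$) coincide, each case demanding a careful verification that the pseudopalindromic closures are evaluated correctly and that exactly one --- not zero, not two --- pseudopalindromic prefixes are missed. Getting all the lengths right and performing the relabelling so that the output coincides with the table entries, and in particular so that the side conditions on the shape of $w_n$ land in the correct rows, is the laborious but essential part; everything else is a direct transcription of the binary-case machinery recalled above.
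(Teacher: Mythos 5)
Your plan follows the paper's proof of this proposition essentially verbatim: the same parity-and-overlap elimination of $|p_2|\geq 4$ and then $|p_2|\in\{0,2\}$ in part 1, and in part 2 the same reduction to $|p_1|\in\{0,2\}$ plus the generic case, the same identification of the overlap $p$ with $ps$ (resp.\ $pst$) as an image of $w_{n-1}$ feeding Lemma~\ref{Ljednonavic} (resp.\ Lemma~\ref{Ldvenavic}), an explicit expression for $w_{n+1}$ in the spirit of~\eqref{eq:tipsttittp2}, and the restriction of the antimorphism triple via Corollary~\ref{CstairsType}. The case analysis you defer is exactly the one the paper carries out to arrive at forms 2.a--2.f, so the approach is correct and essentially identical.
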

\begin{proof}
The proof is analogous to the proof of Proposition~\ref{L_1_1}.
\begin{enumerate}
	\item For $w_{n-1}=\varepsilon$ and $w_n=ij$, the assumption that the prefix of length $n$ of the directive bi-sequence is normalized is not met. Consider further on $|w_{n-1}|\geq 1$. First, suppose that $w_n = w_{n-1} ij \vartheta_1 (w_{n-1})$ and $w_{n+1} =  w_n p_2^{-1} \vartheta_2 (w_n)$ for some $p_2 \in \{0,1,2\}^*$, $|p_2| \geq 4$. Then $w_{n-1}^{(p)}$ and $w_{n-1}^{(c)}$ overlap as in the proof of Lemma \ref{L_1_p} and thus it is a contradiction with the fact that the prefix of length $n$ of the directive bi-sequence is normalized.

	Since an image of $w_n =  w_{n-1} ij \vartheta_1 (w_{n-1})$ is a central factor of $w_{n+1}$, $|p_2|$ has to be even. Further, we want to eliminate the cases where $|p_2| \in \{0,2\}$, see Figure \ref{OLspec21}. In the first case, the prefix of $w_{n+1}$ of length $|w_{n-1}| + 1$ is a pseudopalindrome, in the second case, the prefix of $w_{n+1}$ of length $|w_{n-1}| + 2$ is a pseudopalindrome, and thus we have a contradiction with the fact that the prefix of length $n$ of the directive bi-sequence is normalized. Thus, only the case $w_{n+1} = w_n ij \vartheta_2(w_n)$ remains possible.
		
	\begin{figure}[ht!]
			\begin{center}
				\includegraphics[scale=1.1]{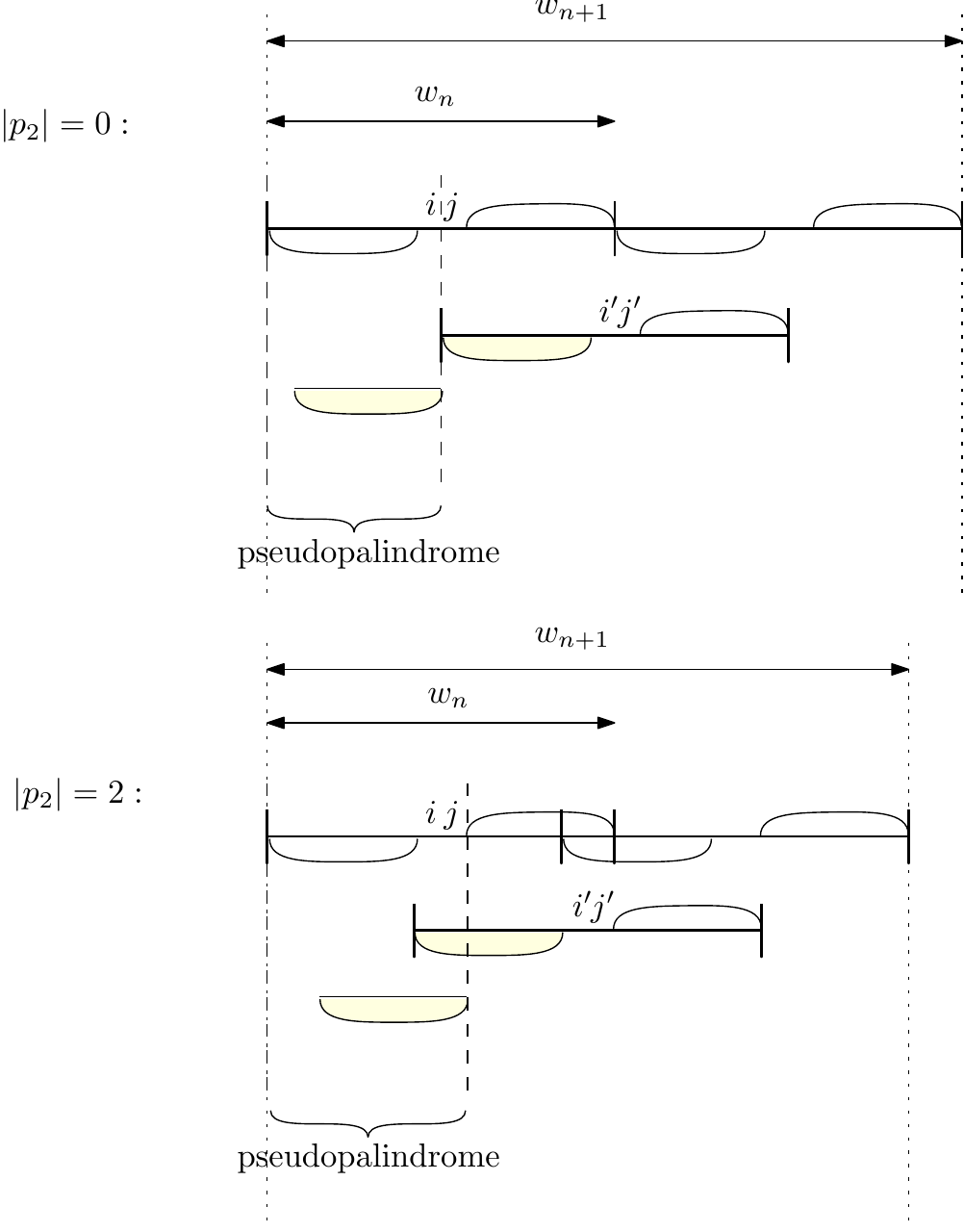}
			\end{center}
			\caption{Illustration of $w_{n+1}$ for $|p_2| \in \{0,2\}$.}
			\label{OLspec21}
		\end{figure}
	
	\item We will proceed exactly in the same way as in the second part of the proof of Proposition \ref{L_1_1}. We have $w_{n+1} = w_n st \vartheta_2 (w_n)$ for some distinct $s,t \in \{0,1,2\}$. We will consider all different possible forms of $w_n$. If $w_n  = w_{n-1} p_1^{-1} \vartheta_1(w_{n-1})$ for some $p_1 \in \{0,1,2\}^*$, $|p_1| \geq 4$, then the factor $w_{n-1}^{(p)}$ and $w_{n-1}^{(c)}$ again overlap, and thus two pseudopalindromic prefixes were missed between $w_n$ and $w_{n+1}$, which is not possible. Since the length of $p_1$ is even, the only remaining possibilities are $|p_1| \in \{0,2\}$ and $w_n = w_{n-1} ij \vartheta_1 (w_{n-1})$.
	The last case is possible and we will derive the special forms of $w_n$ from the other two cases:
	
	\begin{itemize}
	
		\item Let $w_n = w_{n-1}\vartheta_1 (w_{n-1})$ ($|p_1| = 0$). If $|w_{n-1}|=1$, $w_{n+1} \in \{ijjiij, ijjkki\}$. Now, we can suppose that $|w_{n-1}| \geq 2$. Let $w_n^{(0)}=w_n$, $w_n^{(0,1)}$, and $w_n^{(0,2)}=w_{n+1}$ be in order a $\vartheta_1$-, $\vartheta$-, and $\vartheta_2$-palindromes. Let $p$ denote the overlap of $w_n^{(0)}$ and $w_n^{(1)}$, see Figure \ref{OLspec22a}. The length of $p$ is equal to $|w_{n-1}| - 1$ and $p$ is again a $\vartheta$-palindrome by Observation \ref{o_overpal}. Moreover, $ps$ is a pseudopalindrome, too ($ps$ is an image of $w_{n-1}$), hence Lemma \ref{Ljednonavic} is applicable: $ps$ is a prefix of either $a^{\omega}$, $(ab)^{\omega}$, or $(abc)^{\omega}$ for some pairwise different $a,b,c \in \{0,1,2\}$.
		
		\begin{figure}[ht!]
			\begin{center}
				\includegraphics[scale=1.1]{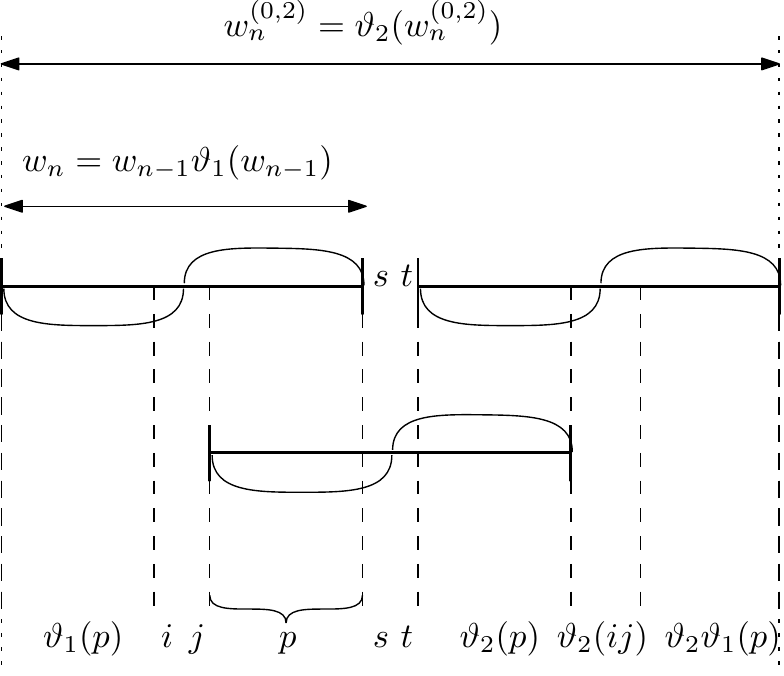}
			\end{center}
			\caption{Illustration for $w_n = w_{n-1}\vartheta_1 (w_{n-1})$.}
			\label{OLspec22a}
		\end{figure}
		
		Furthermore, $w_{n+1}$ satisfies the equation:
	
		\begin{equation}
		\label{eq:tipsttittp2}
			w_{n+1} = \vartheta_1(p) ij p s t \vartheta_2 (p) \vartheta_2 (ij)  \vartheta_2 \vartheta_1 (p).
		\end{equation}

		By Corollary \ref{CstairsType},  $\vartheta_1$, $\vartheta$, and $\vartheta_2$ are either successively $R$, $E_m$, $R$, or $E_m$, $R$, $E_m$, or $E_k$, $E_m$, $E_r$. The different cases will be addressed in the sequel:
		\begin{itemize}
			\item $\vartheta_1 = R$, $\vartheta = E_m$, $\vartheta_2 = R$:
			
			This case is not possible since $st$ cannot be a central factor of an $R$-palindrome for two different letters $s,t$.
			
			\item $\vartheta_1 = E_m$, $\vartheta = R$, $\vartheta_2 = E_m$:
			
			Using \eqref{eq:tipsttittp2}, we obtain:
			 $$ w_{n+1} = E_m(p)ijpstE_m(p)E_m(ij)p=E_m(p)ijpjiE_m(p)ijp, $$
			 where $i$, $j$, $m$ are pairwise different. We used the fact that $ijpst$ is a central factor of an $R$-palindrome.
			
			 Now, if $pj$ is a prefix of $a^{\omega}$, then $pj = j^{l+1}$ for some $l$, and using \eqref{eq:tipsttittp2}, we obtain $w_{n+1} = i^{l+1} j^{l+2} i^{l+2} j^{l+1}$, which is the prefix 2.a.
			
			 If $pj$ is a prefix of $(ab)^{\omega}$, then either $p = (ij)^l i$ and $pj$ is a non-empty $E_m$-palindromic suffix of $w_nj$, which is a contradiction with the fact that $w_{n+1} = w_n ji E_m(w_n)$. Or, $p = (mj)^{l}m$, which is possible, and we obtain the prefix 2.b.(when changing the letters to $i$, $j$, $k$ so that they appear in this given order).
			
			 The factor $pj$ cannot be a prefix of $(abc)^{\omega}$ because this word does not contain any $R$-palindrome (except of length $1$, which has been already examined above).
			
			\item $\vartheta_1 = E_k$, $\vartheta = E_m$, $\vartheta_2 = E_r$:
			
			Here, two cases can happen: either $m=i$ or $m = j$. Thus we obtain two possible equations from \eqref{eq:tipsttittp2}:

			\begin{align}
			\label{eq:dverovnice}
			w_{n+1} = E_k(p)ijpkiE_j(p)jkE_k(p) & \text{, where } p = E_i(p), \nonumber\\
			 & \text{or}  \\
			w_{n+1} = E_k(p)ijpjk E_i(p) ki E_k(p) & \text{, where } p = E_j(p).\nonumber
			\end{align}

			If $pk$ or $pj$ is a prefix of $a^{\omega}$, then only the case $p = j^l$ is possible and we obtain the form 2.c. ($p  = k^l$ is not possible since $k^l$ is not an $E_i$-palindrome).
			
			If $pk$ or $pj$ is a prefix of $(ab)^{\omega}$, then only $p = (kj)^l$ is possible, thus $w_{n+1} = (ik)^l ij (kj)^l ki (ji)^l jk (ik)^l$, which is the form 2.d.
			
			The case where $pj$ or $pk$ is a prefix of $(abc)^{\omega}$ cannot happen because such factors have a suffix of length three composed of three different letters. Therefore, $w_nk$, resp. $w_nj$ does not have an empty $E_j$, resp. $E_i$-palindromic suffix, which is a contradiction with the form of $w_{n+1}=w_nkiE_j(w_n)$, resp. $w_{n+1}=w_njkE_i(w_n)$.
		\end{itemize}

		\item Let now $w_n = w_{n-1} (ij)^{-1} \vartheta_1 (w_{n-1})$ ($|p_1|=2$). For $|w_{n-1}| = 2$, we have $|w_n| = 2$, which is not possible. We further consider $|w_{n-1}| > 2$. Let $w_n^{(0)}$, $w_n^{(0,1)}$, and $w_n^{(0,2)}$ be in order a $\vartheta_1$, $\vartheta$, and $\vartheta_2$-palindromes. Let $p$ denote again the overlap of $w_n^{(0)}$ and $w_n^{(1)}$, see Figure \ref{OLspec22b}. The length of $p$ is equal to $|w_{n-1}| -2$ and $p$ is a $\vartheta$-palindrome by Observation \ref{o_overpal}. Moreover, $pst$ is a pseudopalindrome, too ($pst$ is an image of $w_{n-1}$). Hence, Lemma \ref{Ldvenavic} is applicable: $pst$ is a prefix of $(ab)^{\omega}$, $(abba)^{\omega}$, $a(baca)^{\omega}$, $(abbcca)^{\omega}$, $a(bcba)^{\omega}$, $(aabb)^{\omega}$, $(aabbcc)^{\omega}$ for some pairwise distinct $a$, $b$, $c \in \{0,1,2\}$.
		
				\begin{figure}[ht!]
			\begin{center}
				\includegraphics[scale=1.1]{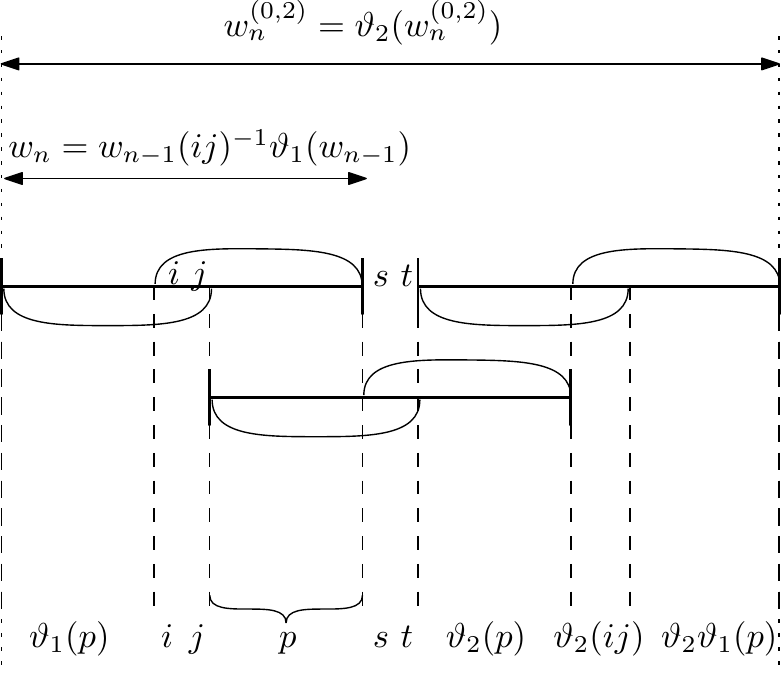}
			\end{center}
			\caption{Illustration for $w_n = w_{n-1} (ij)^{-1} \vartheta_1 (w_{n-1})$.}
			\label{OLspec22b}
		\end{figure}

Furthermore, $w_{n+1}$ satisfies \eqref{eq:tipsttittp2}.

By Corollary \ref{CstairsType}, $\vartheta_1$, $\vartheta$, and $\vartheta_2$ are either successively $R$, $E_m$, $R$, or $E_m$, $R$, $E_m$, or $E_m$, $E_r$, $E_q$. The different cases will be addressed in the sequel:
\begin{itemize}
\item $\vartheta_1 = R$, $\vartheta = E_m$, $\vartheta_2 = R$:

This case is not possible since $st$ cannot be a central factor of an $R$-palindrome for two different letters $s,t$.

\item $\vartheta_1 = E_m$, $\vartheta = R$, $\vartheta_2 = E_m$:

Using \eqref{eq:tipsttittp2}, we obtain:
			 $$ w_{n+1} = E_m(p)ijpjiE_m(p)ijp, $$
where, $i$, $j$, $m$ are pairwise different.

Since $p$ is an $R$-palindrome, $pji$ cannot be a prefix of $(abbcca)^{\omega}$, $(aabb)^{\omega}$, and $(aabbcc)^{\omega}$.

If $pji$ is a prefix of $(ab)^{\omega}$, then $p = (ij)^{l-1}i$.
If $pji$ is a prefix of $(abba)^{\omega}$, then $p = (jiij)^l$.
If $pji$ is a prefix of $a(baca)^{\omega}$, then $p = i(miji)^{l-1}mi$.
In all three previous cases, the $E_m$-palindromic suffix of $w_nj$ is non-empty, which is a contradiction with $w_{n+1} =w_n ji E_m(w_n)$.

If $pji$ is a prefix of $(abc)^{\omega}$, then $p = m$.
If $pji$ is a prefix of $a(bcba)^{\omega}$, then $p= m(jijm)^{l-1}$.
These two cases lead to the form 2.e.

\item $\vartheta_1 = E_m$, $\vartheta = E_r$, $\vartheta_2 = E_q$:

Using the fact that $st$ is a central factor of an $E_q$-palindrome and $ijpst$ is a central factor of an $E_r$-palindrome, we obtain two possible equations for $w_{n+1}$ from \eqref{eq:tipsttittp2}:

\begin{align}
			\label{eq:dverovnice2}
			w_{n+1} = E_m(p)ijpmiE_j(p)jmE_m(p) & \text{, where } p = E_i(p), \nonumber\\
			 & \text{or}  \\
			w_{n+1} = E_m(p)ijpjm E_i(p) mi E_m(p) & \text{, where } p = E_j(p).\nonumber
			\end{align}

Since $p$ is an $E_i$-, resp. $E_j$-palindrome, $pmi$, resp. $pjm$ cannot be a prefix of $(ab)^{\omega}$, $(abba)^{\omega}$, $(aabb)^{\omega}$, $(aabbcc)^{\omega}$.

If $pmi$, resp. $pjm$ is a prefix of $(abc)^{\omega}$, then $p = (mij)^l$, resp. $p = (mij)^{l-1}mi$, and $w_n m$, resp. $w_n j$ has a non-empty $E_j$, resp. $E_i$-palindromic suffix, which is a contradiction with $w_{n+1} =w_n miE_j(w_n)$, resp. $w_{n+1} = w_n jm E_i(w_n)$.

If $pmi$ is a prefix of $a(baca)^{\omega}$, then $p = i(miji)^{l-1}$ and $w_nm$ has a non-empty $E_j$-palindromic suffix.
Moreover, $pjm$ cannot be a prefix of $a(baca)^{\omega}$ for $p = E_j(p)$.	

If $pjm$ is a prefix of $(abbcca)^{\omega}$, then $p = (jmmiij)^l$ and $w_nj$ has a non-empty $E_i$-palindromic suffix. Moreover, $pmi$ cannot be a prefix of $(abbcca)^{\omega}$ for $p = E_i(p)$.

If $pjm$ is a prefix of $a(bcba)^{\omega}$, then $p = m(jijm)^{l-1}ji$ and we obtain the form 2.f. Moreover, $pmi$ cannot be a prefix of $a(bcba)^{\omega}$ for $p = E_i(p)$.

		\end{itemize}
		
	\end{itemize}
	
\end{enumerate}

\end{proof}

\subsubsection{Normalization rules}
At this point, it is necessary to mention that the normalized form of a directive bi-sequence is not always unique over a ternary alphabet. The directive bi-sequence $\DT$, where $\Delta=i^l$ and $\Theta \in \{R, E_i\}^*$, is normalized and generates the word $i^l$. It is easily seen that if a prefix $w_n$ of $\mbu \DT$ contains two different letters, then the normalized sequence is defined uniquely starting from the index $n$.
\begin{remark}\label{preprocessing}
Suppose that $i^l$ is the longest prefix of $\mbu \DT$ that contains only the letter $i$. From now on, we will say that the bi-sequence $\DT$ is normalized if $\DT$ is normalized according to Definition \ref{Dnormalized} and the prefix of $\Theta$ is $E_i^l$. This preprocessing of the prefix of the directive bi-sequence will be done before starting the normalization process.
\end{remark}
\begin{example}
The directive bi-sequence $(0^{\omega}, E_0^{\omega})$ is normalized. The directive bi-sequence $(0001^{\omega}, RE_0RE_2^{\omega})$ is not, its normalized form is $(0001^{\omega}, E_0E_0E_0E_2^{\omega})$ and we directly see that both of them generate the same generalized pseudostandard word.
\end{example}

\noindent {\bf Prefix rules}

\noindent Now that every generalized pseudostandard word has exactly one normalized directive bi-sequence, we will derive prefix substitution rules for cases where one pseudopalindromic prefix was missed from Propositions \ref{Ospecwijw}, \ref{Ospecwiw}, \ref{L_1_1}, and \ref{L_1_2}. These rules define how to rewrite prefixes of $\DT$ so as not to miss any pseudopalindromic prefix. On the left, the prefix of length $n$ of $(\Delta, \Theta)$ is normalized and there is one missed pseudopalindromic prefix between $w_n$ and $w_{n+1}$. On the right, the prefix of $(\Delta, \Theta)$ is rewritten so that the same prefix of a pseudostandard word is obtained and that the prefix of $(\Delta, \Theta)$ of length $n+1$ is normalized. The index $l$ in the rules can take any positive integer value.

First, the special forms of $w_{n+1}$ from Propositions \ref{Ospecwijw}, \ref{Ospecwiw}, \ref{L_1_1}, and \ref{L_1_2} are considered one by one. Their corresponding non-normalized and normalized directive bi-sequence is found, followed by the new prefix substitution rule obtained:

\begin{itemize}

\item $w_{n+1} = i^{l}j^{l}$

The normalized bi-sequence of $w_n = i^{l-1}$ is $(i^l, E_i^l)$. If we want to obtain directly $w_{n+1}$, then $\delta_{n+1}$ is $i$ and $\vartheta_{n+1}$ is $E_k$. Furthermore, we know that we missed the pseudopalindromic prefix $i^l$. Thus, the non-normalized bi-sequence of $w_{n+1}$ is $(i^l, E_i^{l-1}E_k)$ and the normalized bi-sequence is $(i^lj, E_i^lE_k)$. We obtain the new prefix rule:

\begin{prefixrule}\
\label{p1}
 (i^{l}, E_i^{l-1}E_k) \rightarrow (i^{l}j, E_i^{l}E_k).
\end{prefixrule}

\item $w_{n+1} = (ij)^l(ki)^l$ :

\begin{prefixrule}\
\label{p3}
 (i(ji)^{l-1}j, E_i(E_kR)^{l-1}E_i) \rightarrow (i(ji)^{l-1}jk, E_i(E_kR)^{l-1}E_kE_i),
\end{prefixrule}

for $l \geq 2$ (for $l=1$ two pseudopalindromic prefixes were missed).

\item $w_{n+1} = (ij)^lik(jk)^l$:
\begin{prefixrule}\
\label{p4}
 ((ij)^{l}i, E_iE_k(RE_k)^{l-1}E_j) \rightarrow ((ij)^{l}ik, E_iE_k(RE_k)^{l-1}RE_j).
\end{prefixrule}

\item $w_{n+1} = ijjkki$:
\begin{prefixrule}\
\label{p6}
 (ijj, E_iE_kE_i) \rightarrow (ijjk,E_iE_kE_jE_i).
\end{prefixrule}
\item $w_{n+1} = ijjiij$:
\begin{prefixrule}\
\label{p7}
 (ijj, E_iE_kE_k) \rightarrow (ijji,E_iE_kRE_k).
\end{prefixrule}
\item $w_{n+1} = (ijkj)^l(ikij)^l$:
\begin{prefixrule}\
\label{p8}
(ijk(jj)^{l-1}j, E_iE_kE_j(RE_j)^{l-1}E_k) \rightarrow (ijk(jj)^{l-1}jk, E_iE_kE_j(RE_j)^{l-1}RE_k).
\end{prefixrule}
\item $w_{n+1} = (ijkj)^lijki(kjki)^l$:
\begin{prefixrule}\
\label{p9}
\begin{split}
 (ijkj(jj)^{l-1}j, E_iE_kE_jR(E_jR)^{l-1}E_i) \rightarrow \\ (ijkj(jj)^{l-1}ji, E_iE_kE_jR(E_jR)^{l-1}E_jE_i).
 \end{split}
\end{prefixrule}

\item $w_{n+1} = iji$:
\begin{prefixrule}\
\label{p10}
 (ij, E_iR) \rightarrow (iji, E_iE_kR).
\end{prefixrule}

\item $w_{n+1} = ijk$:
\begin{prefixrule}\
\label{p11}
 (ij, E_iE_j) \rightarrow (ijk, E_iE_kE_j).
\end{prefixrule}
\item $w_{n+1} = (ij)^li(ki)^l$:
\begin{prefixrule}\
\label{p12}
 (ij(ij)^{l-1}i, E_iE_k(RE_k)^{l-1}E_i) \rightarrow (ij(ij)^{l-1}ik, E_iE_k(RE_k)^{l-1}RE_i).
\end{prefixrule}
\item $w_{n+1} = (ij)^l ijk(jk)^l$:
\begin{prefixrule}\
\label{p13}
 (i(ji)^{l}j,E_i(E_kR)^{l}E_j) \rightarrow (i(ji)^{l}jk, E_i(E_kR)^{l}E_kE_j).
\end{prefixrule}

\item $w_{n+1} = (ijk)^{l-1} ijkji(kji)^{l-1}$:
\begin{prefixrule}\
\label{p15}
 ((ijk)^{l-1}ijk, (E_iE_kE_j)^{l-1}E_iE_kR) \rightarrow ((ijk)^{l}j, (E_iE_kE_j)^{l}R).
\end{prefixrule}
\item $w_{n+1} = (ijk)^l iji(kji)^l$:
\begin{prefixrule}\
\label{p16}
 ((ijk)^{l}ij, (E_iE_kE_j)^{l}E_iR) \rightarrow ((ijk)^{l}iji, (E_iE_kE_j)^{l}E_iE_kR).
\end{prefixrule}
\item $w_{n+1} = (ijk)^l i(kji)^l$:
\begin{prefixrule}\
\label{p17}
 ((ijk)^{l}i, (E_iE_kE_j)^{l}R) \rightarrow ((ijk)^{l}ik, (E_iE_kE_j)^{l}E_iR).
\end{prefixrule}

\item $w_{n+1} = (ijk)^lk^{-1}(kji)^lk(ijk)^lk^{-1}(kji)^l$:
\begin{prefixrule}\
\label{p18}
 ((ijk)^{l}jk, (E_iE_kE_j)^{l}RR) \rightarrow ((ijk)^{l}jkk, (E_iE_kE_j)^{l}RE_kR).
\end{prefixrule}
\item $w_{n+1} = (ijk)^li(kji)^lk(ijk)^li(kji)^l$:
\begin{prefixrule}\
\label{p19}
 ((ijk)^{l}ikk, (E_iE_kE_j)^{l}E_iRR) \rightarrow ((ijk)^{l}ikki, (E_iE_kE_j)^{l}E_iRE_jR).
\end{prefixrule}
\item $w_{n+1} = (ijk)^{l-1} iji(kji)^{l-1} k(ijk)^{l-1} iji(kji)^{l-1}$:
\begin{prefixrule}\
\label{p20}
\begin{split}
 ((ijk)^{l-1}ijik, (E_iE_kE_j)^{l-1}E_iE_kRR) \rightarrow \\ ((ijk)^{l-1}ijikj, (E_iE_kE_j)^{l-1}E_iE_kRE_iR).
 \end{split}
\end{prefixrule}
\item $w_{n+1} = i(ji)^{l-1} jk(jk)^{l-1} j(ij)^{l-1} ij(kj)^{l-1} k $:
\begin{prefixrule}\
\label{p21}
 (i(ji)^{l-1}jkj, E_i(E_kR)^{l-1}E_kE_jE_j) \rightarrow (i(ji)^{l-1}jkjj, E_i(E_kR)^{l-1}E_kE_jRE_j).
\end{prefixrule}
\item $w_{n+1} = (ij)^l i(ki)^l k(jk)^l j(ij)^l$:
\begin{prefixrule}\
\label{p22}
\begin{split}
(ij(ij)^{l-1}ikk, E_iE_k(RE_k)^{l-1}RE_iE_k) \rightarrow \\ (ij(ij)^{l-1}ikkj, E_iE_k(RE_k)^{l-1}RE_iE_jE_k).
\end{split}
\end{prefixrule}

\item $w_{n+1} = i^lj^{l+1}i^{l+1}j^l$:
\begin{prefixrule}\
\label{p23}
 (i^{l}jj, E_i^{l}E_kE_k) \rightarrow (i^{l}jji, E_i^{l}E_kRE_k).
\end{prefixrule}

\item $w_{n+1} = i(ji)^ljk(ik)^likji(ji)^ljk(ik)^li$:
\begin{prefixrule}\
\label{p24}
 (ij(ij)^{l}kk, E_iE_k(RE_k)^{l}E_iE_i) \rightarrow (ij(ij)^{l}kkj, E_iE_k(RE_k)^{l}E_iRE_i).
\end{prefixrule}

\item $w_{n+1} = i^lj^{l+1}k^{l+1}i^{l}$:
\begin{prefixrule}\
\label{p25}
 (i^{l}jj, E_i^{l}E_kE_i) \rightarrow (i^{l}jjk, E_i^{l}E_kE_jE_i).
\end{prefixrule}

\item $w_{n+1} = (ij)^l ik(jk)^l ji(ki)^l kj(ij)^l$:
\begin{prefixrule}\
\label{p26}
\begin{split}
 (ij(ij)^{l-1}ikj, E_iE_k(RE_k)^{l-1}RE_jE_k) \rightarrow \\ (ij(ij)^{l-1}ikjk, E_iE_k(RE_k)^{l-1}RE_jE_iE_k).
 \end{split}
\end{prefixrule}

\item $w_{n+1} = (ijkj)^{l-1} ijki(kjki)^{l-1} kj(ijkj)^{l-1} ijki(kjki)^{l-1}$:
\begin{prefixrule}\
\label{p27}
\begin{split}
 (ijk(jj)^{l-1}ik, E_iE_kE_j(RE_j)^{l-1}E_iE_i) \rightarrow \\ (ijk(jj)^{l-1}ikj, E_iE_kE_j(RE_j)^{l-1}E_iRE_i).
\end{split}
\end{prefixrule}

\item $w_{n+1} = ij(kjij)^{l-1} kjik(ijik)^{l-1} ijikjk(ikjk)^{l-1} ikji(jkji)^{l-1} jk$:
\begin{prefixrule}\
\label{p28}
\begin{split}
 (ijk(jj)^{l-1}jki, E_iE_kE_j(RE_j)^{l-1}RE_kE_j) \rightarrow \\ (ijk(jj)^{l-1}jkik, E_iE_kE_j(RE_j)^{l-1}RE_kE_iE_j).
\end{split}
\end{prefixrule}

\end{itemize}

\noindent {\bf Factor rules}

\noindent The next theorem concludes the section concerning one pseudopalindromic prefix being missed between $w_n$ and $w_{n+1}$. Three factor substitution rules are obtained. Those three rules contain factors of the directive bi-sequence that are not normalized on the left, and their normalized transcription on the right.
\begin{theorem}
\label{nonprefix1}
Let $(\Delta,\Theta) = (\delta_1\delta_2\ldots, \vartheta_1\vartheta_2\ldots)$ be a directive bi-sequence having a normalized prefix of length $n$. Moreover, let the prefix of $(\Delta,\Theta)$ of length $n+1$ be different from any prefix on the left side of the prefix rules \eqref{p1} to \eqref{p28}. Then there is exactly one missed pseudopalindromic prefix between $w_n$ and $w_{n+1}$ if, and only if, $(\delta_{n-1}\delta_n\delta_{n+1}, \vartheta_{n-1}\vartheta_n\vartheta_{n+1})$ has one of the following forms:
\begin{flalign} \label{eq:rule123}
\begin{split}
\bullet& \; \; (ab_1b_2, RE_iE_i),\text{ where } b_1 = E_i(b_2), \text{ }(\text{except } (iii, RE_iE_i) \text{ for } w_{n-1}=i^{n-1}),\\
\bullet& \; \; (ab_1b_2, E_iRR),\text{ where } b_1 = E_i(b_2), \text{ }(\text{except } (iii, E_iRR) \text{ for } w_{n-1}=i^{n-1}),\\
\bullet& \; \; (ab_1b_2, E_iE_jE_i),\text{ where } E_i(b_1)= E_j(b_2).
\end{split}
\end{flalign}

Therefore, we obtain a set of factor substitution rules (not necessarily applicable to a prefix):
\begin{enumerate}
\item
$(ab_1b_2, RE_iE_i) \rightarrow (ab_1b_2b_1, RE_iRE_i),$ where $b_1 = E_i(b_2)$,
\item
$(ab_1b_2, E_iRR) \rightarrow (ab_1b_2b_1, E_iRE_iR),$ where $b_1 = E_i(b_2)$,
\item
$(ab_1b_2, E_iE_jE_i) \rightarrow (ab_1b_2E_iE_j(b_1),E_iE_jE_kE_i),$ where $E_i(b_1)= E_j(b_2)$.
\end{enumerate}

\end{theorem}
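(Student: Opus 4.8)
The plan is to reduce, using the preceding propositions, to the \emph{diagonal} configuration in which $w_n = w_{n-1}\,u'\,\vartheta_n(w_{n-1})$ and $w_{n+1} = w_n\,u\,\vartheta_{n+1}(w_n)$ with $|u'|,|u|\in\{1,2\}$, to read off from it the admissible antimorphism triples and the letter relations, and to obtain the three factor rules by inserting the one step that recovers the missed prefix; the converse implication and the translation invariance that makes the rules genuine factor rules are then short.

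First I would show that Assumption~\ref{assumption} holds. If it failed we would have $2|w_n|+1\le |w_{n+1}|\le 2|w_n|+2$, and Propositions~\ref{Ospecwijw} and~\ref{Ospecwiw} would identify $w_{n+1}$ (together with $w_n$ and the overlap data) as one of the tabulated special forms; every one of those in which exactly one pseudopalindromic prefix is missed is the left-hand side of one of the prefix rules \eqref{p1}--\eqref{p17}, contradicting the hypothesis on the prefix of $\DT$ of length $n+1$. With Assumption~\ref{assumption} in force, Corollary~\ref{CstairsType} tells us that $w_n$, the missed prefix $w_n^{(0,1)}$ and $w_{n+1}$ are successively an $R$-, $E_i$-, $R$-palindrome, or an $E_i$-, $R$-, $E_i$-palindrome, or an $E_i$-, $E_j$-, $E_k$-palindrome; hence $(\vartheta_n,\vartheta_{n+1})$ is one of $(R,R)$, $(E_i,E_i)$, $(E_i,E_k)$. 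Lemma~\ref{L_1_p} together with Propositions~\ref{L_1_1} and~\ref{L_1_2} then forces the diagonal configuration: any genuine overlap of $w_n$ with $\vartheta_{n+1}(w_n)$ in $w_{n+1}$ would, by Lemma~\ref{L_1_p}, entail a genuine overlap of $w_{n-1}$ with $\vartheta_n(w_{n-1})$ in $w_n$ and hence, by Observation~\ref{o_overpal} and normalization of the length-$n$ prefix, a missed pseudopalindromic prefix strictly between $w_{n-1}$ and $w_n$, a contradiction; and the non-diagonal alternatives for $w_n$ relative to $w_{n-1}$ are exactly the forms $2.a$--$2.f$ of Propositions~\ref{L_1_1} and~\ref{L_1_2}, i.e.\ the left-hand sides of \eqref{p18}--\eqref{p28}, again excluded.

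It then remains to pin down $\vartheta_{n-1}$ and the letters $\delta_n$, $\delta_{n+1}$. Writing $p$ for the overlap of $w_n^{(0)}$ and $w_n^{(1)}$, which by Observation~\ref{o_overpal} is a $\vartheta'$-palindrome (with $\vartheta'$ the antimorphism of $w_n^{(0,1)}$) and which, extended by the middle letter(s) of $w_n$, is an image of $w_{n-1}$ and hence a pseudopalindrome, Lemma~\ref{Ljednonavic} (if $|u'|=1$) or Lemma~\ref{Ldvenavic} (if $|u'|=2$) applies. Expanding $w_{n+1}$ through the defining equation \eqref{eq:tipjtittp}, respectively \eqref{eq:tipsttittp2}, using $w_{n-1}=\vartheta_{n-1}(w_{n-1})$ and Observation~\ref{o_natureofpalonpal} to determine the type of the suffix image $\vartheta_n(w_{n-1})$ of $w_n$, and using that normalization of the length-$n$ prefix makes the longest $\vartheta_n$-palindromic suffix of $w_{n-1}\delta_n$ equal to $\delta_n$ (if $|u'|=1$) or empty (if $|u'|=2$), I would check case by case that $w_n^{(0,1)}$ can be a missed prefix only in the three configurations $(\vartheta_{n-1},\vartheta_n,\vartheta_{n+1})\in\{(R,E_i,E_i),(E_i,R,R),(E_i,E_j,E_i)\}$, with the accompanying letter conditions $\delta_n=E_i(\delta_{n+1})$ in the first two cases and $E_i(\delta_n)=E_j(\delta_{n+1})$ in the third, which is precisely~\eqref{eq:rule123}; the instances with $w_{n-1}=i^{n-1}$ and $\delta_{n-1}=\delta_n=\delta_{n+1}=i$ yield $w_n=i^{n}$, $w_{n+1}=i^{n+1}$ and miss nothing, which is the stated exception. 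For the converse, each admissible triple lets one reconstruct $w_{n-1},w_n,w_{n+1}$ explicitly, exhibit by Observations~\ref{o_overpal} and~\ref{o_natureofpalonpal} the one pseudopalindrome strictly between $w_n$ and $w_{n+1}$, and invoke Theorem~\ref{Ttwopseudo} and Corollary~\ref{CstairsType} to see that nothing else is missed.

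Finally, for the factor rules: in the diagonal configuration $w_n^{(0,1)}$ is the $\vartheta'$-palindromic closure of $w_n$ and $w_{n+1}$ the $\vartheta_{n+1}$-palindromic closure of $w_n^{(0,1)}$, so inserting into $\DT$ the single pair $(\delta,\vartheta')$ -- with $\vartheta'=E_i$, $R$, $E_k$ in the three respective cases by Corollary~\ref{CstairsType}, and $\delta=\delta_n$ in the first two cases, $\delta=E_iE_j(\delta_n)$ in the third, read off from the explicit form of $w_n^{(0,1)}$ in \eqref{eq:tipjtittp}/\eqref{eq:tipsttittp2} -- produces a bi-sequence generating $\mbu\DT$ whose prefix of length $n+2$ is normalized, which gives the rewriting rules as stated. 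Because all the data involved is the local window $(\delta_{n-1}\delta_n\delta_{n+1},\vartheta_{n-1}\vartheta_n\vartheta_{n+1})$ and the diagonal structure it determines is translation invariant, these rules apply to any factor, not only to prefixes. I expect the main obstacle to be the third paragraph: combining the structure equations \eqref{eq:tipjtittp}/\eqref{eq:tipsttittp2}, Lemmas~\ref{Ljednonavic}/\ref{Ldvenavic} and the normalization of the length-$n$ prefix to eliminate every configuration except the three patterns, while keeping straight which letter plays which role -- essentially a streamlined rerun of the case analyses in Propositions~\ref{L_1_1} and~\ref{L_1_2} restricted to their generic branch.
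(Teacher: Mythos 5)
Your reduction in the second paragraph is where the argument breaks down. You claim that a genuine overlap of $w_n$ with $\vartheta_{n+1}(w_n)$ inside $w_{n+1}$ would, via Lemma~\ref{L_1_p}, force a genuine overlap of $w_{n-1}$ with $\vartheta_n(w_{n-1})$ inside $w_n$, and that this overlap produces, ``by Observation~\ref{o_overpal} and normalization of the length-$n$ prefix, a missed pseudopalindromic prefix strictly between $w_{n-1}$ and $w_n$''. That last step is false: the pseudopalindrome that Observation~\ref{o_overpal} associates to the overlap of the prefix occurrence of $w_{n-1}$ with the suffix occurrence of its image in $w_n$ is $w_n$ itself, not a new intermediate prefix, so nothing contradicts normalization. (If a positive overlap did force such a contradiction, Lemma~\ref{L_1_p} --- whose whole content is that the two overlaps have \emph{equal length} --- would be vacuous, and a pseudopalindromic closure could never use a nonempty longest pseudopalindromic suffix in a normalized bi-sequence, which is plainly not the case.) As a result you never treat the first and generic form in \eqref{eq:form1missed}, namely $w_{n+1}=w_{n-1}p_1^{-1}\vartheta_1(w_{n-1})\,p_2^{-1}\vartheta_2\vartheta_1(w_{n-1})\,\vartheta_2(p_1)^{-1}\vartheta_2(w_{n-1})$ with $|p_1|=|p_2|>0$, which is precisely the configuration in which the factor rules are applied deep inside a directive bi-sequence; your ``diagonal'' cases $|u|\in\{1,2\}$ cover only the second and third bullets of \eqref{eq:form1missed} (and even the abutting case $|p_1|=|p_2|=0$ is left out).

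This omission also distorts your third paragraph: restricted to the insertion cases, the analysis via Lemmas~\ref{Ljednonavic}/\ref{Ldvenavic} is essentially a rerun of part~2 of Propositions~\ref{L_1_1} and~\ref{L_1_2}, which produces the tabulated special forms (already absorbed into the prefix rules), not the general letter conditions $b_1=E_i(b_2)$, resp.\ $E_i(b_1)=E_j(b_2)$. The paper instead argues directly on the overlapping form: the missed prefix $w_n^{(0,1)}=w_{n-1}p_1^{-1}\vartheta_1(w_{n-1})p_2^{-1}\vartheta_2\vartheta_1(w_{n-1})$ has $\vartheta_1(w_{n-1})$ as a central factor and hence the same pseudopalindromic type; combining this with Corollary~\ref{CstairsType} and Observation~\ref{o_natureofpalonpal} determines the types of $w_{n-1}$, $w_n$, $w_{n+1}$ and, by reading off the letters adjacent to the occurrences of the images of $w_{n-1}$, yields $\delta_n=E_i(\delta_{n+1})$ in the first two patterns and $E_i(\delta_n)=E_j(\delta_{n+1})$ in the third, together with the inserted letter $\delta'$. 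Your first paragraph (establishing Assumption~\ref{assumption} and the three type-triples from Corollary~\ref{CstairsType}) is sound and matches the paper, but the core case analysis needs to be redone on the overlapping form.
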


\begin{proof}
$(\Rightarrow):$ From Lemma~\ref{L_1_p} and Propositions \ref{L_1_1} and \ref{L_1_2}, one of the following possibilities holds:
\begin{itemize}
\item
$w_{n+1} = w_np_2^{-1}\theta_2(w_n)$ and $w_n = w_{n-1}p_1^{-1}\theta_1(w_{n-1})$, where $|p_1| = |p_2|$,
\item
$w_{n+1} = w_ni\theta_2(w_n)$ and $w_n = w_{n-1}j\theta_1(w_{n-1})$,
\item
$w_{n+1} = w_nij\theta_2(w_n)$ and $w_n = w_{n-1}kl\theta_1(w_{n-1})$.
\end{itemize}
Hence, $w_{n+1}$ is of one of the following forms:
\begin{flalign} \label{eq:form1missed}
\begin{split}
\bullet& \; \; w_{n+1} = w_{n-1}p_1^{-1}\theta_1(w_{n-1})p_2^{-1}\theta_2\theta_1(w_{n-1})\theta_2(p_1)^{-1}\theta_2(w_{n-1}), |p_1| = |p_2|, \\[2.5mm]
\bullet& \; \; w_{n+1} = w_{n-1}j\theta_1(w_{n-1})i\theta_2\theta_1(w_{n-1})\theta_2(j)\theta_2(w_{n-1}), \\[2.5mm]
\bullet& \; \; w_{n+1} = w_{n-1}kl\theta_1(w_{n-1})ij\theta_2\theta_1(w_{n-1})\theta_2(kl)\theta_2(w_{n-1}).
\end{split}
\end{flalign}

The rest of the proof will be focused on the first form of $w_{n+1}$, the second and third case can be treated analogously. The assumptions of Theorem \ref{nonprefix1} guarantee that Assumption~\ref{assumption} is met, thus $w_n^{(1))}$ is a central factor of $w_{n+1}$. We can deduce from the given form of $w_{n+1}$ in \eqref{eq:form1missed} that the missed pseudopalindromic prefix $w_n^{(0,1)}$ is
$w_{n-1}p_1^{-1}\theta_1(w_{n-1})p_2^{-1}\theta_2\theta_1(w_{n-1})$. Moreover, since $\theta_1 (w_{n-1})$ is its central factor, it is of the same pseudopalindromic type as $w_n^{(0,1)}$. See Figure \ref{OT1missed} for a better understanding.

	\begin{figure}[ht!]
		\begin{center}
			\includegraphics[scale=1.1]{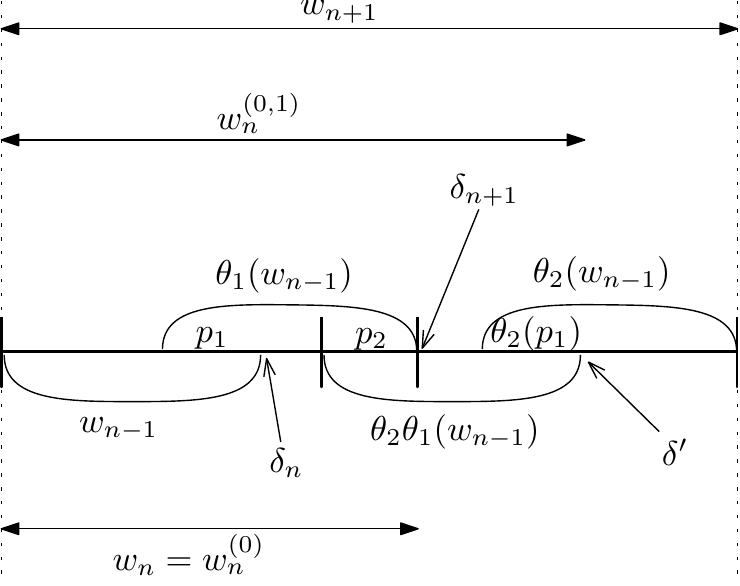}
		\end{center}
		\caption{Illustration of the first form of $w_{n+1}$.}
		\label{OT1missed}
	\end{figure}
	
Since one pseudopalindromic prefix was missed between $w_n$ and $w_{n+1}$, by Corollary \ref{CstairsType}, $\theta_1 = \theta_2 = R$, $\theta_1 = \theta_2 = E_i$, or $\theta_1 = E_j$ and $\theta_2 = E_i$. In the first case, an $E_i$-palindrome was missed between $w_n$ and $w_{n+1}$, in the second case an $R$-palindrome, and in the third case an $E_k$-palindrome.

\begin{itemize}
	\item $\theta_1 = \theta_2 = R$:
	
Since $w_n = (w_{n-1}\delta_n)^R$ is an $R$-palindrome, $R(\delta_n) = \delta_n$ is the letter preceding $w_n^{(1)}$. Since, $w_n^{(0,1)}$ is a (missed) $E_i$-palindromic prefix, $\delta_{n+1} = E_i (\delta_n)$. Moreover, $\theta_1(w_{n-1}) = R(w_{n-1})$ is an $E_i$-palindrome, and thus $w_{n-1}$ is an $E_i$-palindrome by Observation \ref{o_natureofpalonpal}.
	
	Overall, $w_{n-1} = E_i(w_{n-1})$, $w_n$ = $R(w_n)$, $w_{n+1} = R(w_{n+1})$, where $\delta_n = E_i (\delta_{n+1})$. The letter $\delta '$ following $w_n^{(0,1)}$ is $R(R(\delta_n)) = \delta_n$.
Using the notation $\delta_{n-1}=a$, $\delta_n=b_1$, $\delta_{n+1}=b_2$ and $\vartheta_{n-1}=E_i$, $\vartheta_{n}=R$, $\vartheta_{n+1}=R$, we obtain the rule 2.	
	
	\item $\theta_1 = \theta_2 = E_i$:
	Similarly, we obtain that $w_{n-1} = R(w_{n-1})$, $w_n$ = $E_i(w_n)$, $w_{n+1} = E_i(w_{n+1})$, where $\delta_n = E_i (\delta_{n+1})$. The missed pseudopalindromic prefix is an $R$-palindrome and the letter $\delta '$ following it is $E_i(E_i(\delta_n)) = \delta_n$. Consequently, we have the rule 1.
	
	\item $\theta_1 = E_j$ and  $\theta_2 = E_i$:
	Here, $w_{n-1} = E_i(w_{n-1})$, $w_n$ = $E_j(w_n)$, $w_{n+1} = E_i(w_{n+1})$. Since $w_n^{(0,1)}$ is a (missed) $E_k$-palindromic prefix, $E_j(\delta_n)=E_k(\delta_{n+1})$, which is equivalent to $E_i(\delta_n)=E_j(\delta_{n+1})$. The letter $\delta '$ following $w_n^{(0,1)}$ is $E_i(E_j(\delta_n))$. It corresponds to the rule 3.
\end{itemize}

$(\Leftarrow):$ It is easily seen that if $(\delta_{n-1}\delta_n\delta_{n+1}, \vartheta_{n-1}\vartheta_n\vartheta_{n+1})$ has one of the form in \eqref{eq:rule123}, then $w_{n+1}$ has one of the forms in \eqref{eq:form1missed}. In order to have this implication, it was necessary to exclude $(iii, E_iRR)$ and $(iii, RE_iE_i)$ for $w_{n-1}=i^{n-1}$. In the first case, the (only) missed pseudopalindromic prefix between $w_n$ and $w_{n+1}$ is the factor $w_n^{(0,1)} = w_{n-1}p_1^{-1}\theta_1(w_{n-1})p_2^{-1}\theta_2\theta_1(w_{n-1})$. Similarly, for the remaining two cases.

Finally, we obtain the non-prefix rules by knowing $\delta'$ and what type of pseudopalindrome was missed.

\begin{enumerate}
\item
$(ab_1b_2, RE_iE_i) \rightarrow (ab_1b_2b_1, RE_iRE_i),  \text{ where } b_1 = E_i(b_2)$,
\item
$(ab_1b_2, E_iRR) \rightarrow (ab_1b_2b_1, E_iRE_iR), \text{ where } b_1 = E_i(b_2)$,
\item
$(ab_1b_2, E_iE_jE_i) \rightarrow (ab_1b_2E_iE_j(b_1), \text{ where } E_iE_jE_kE_i), E_i(b_1)= E_j(b_2)$.
\end{enumerate}

\end{proof}

The rules obtained in Theorem \ref{nonprefix1} are applicable to any factor of the directive bi-sequence including the prefix. Since we decided that a normalized directive bi-sequence $\DT$ has $(i^l, E_i^l)$ as prefix whenever $i^l$ is a prefix of $\mbu \DT$, it is possible that the $R$-palindromic closure of the rule $1$, resp. $2$ in Theorem \ref{nonprefix1} has been replaced by the antimorphism $E_i$ during the preprocessing procedure. For example, if the beginning of $\DT$ is $(00000020, RE_0RRE_0RE_1E_1)$, then we process it to $(00000020, E_0E_0E_0E_0E_0E_0E_1E_1)$ and the factor rule is not applicable anymore even if it should because the palindromic prefix $000000222222000000$ was missed. This situation will be prevented by adding three more additional rules that we will now derive taking into account the possible forms of the rule 1, resp. 2:

\begin{enumerate}
\item $(iii, RE_iE_i)$ or $(iii, E_iRR)$:
If the prefix of $\DT$ is $(i^{l}ii, \{R,E_i\}^{l} E_iE_i$), $l \geq 1$, or $(i^{l}ii, \{R,E_i\}^lRR)$, $l\geq 1$, then it is normalized and no additional prefix rule is needed.

\item $(iij, RE_kE_k)$:
This leads to the prefix of $\DT$ equal to $(i^{l-1} i ij, E_i^{l-1} E_i E_k E_k)$. In this case, the prefix is not normalized already between the last antimorphism $E_i$ and the first antimorphism $E_k$ (the prefix rule \eqref{p1} is applicable), so no additional prefix rule is needed.

\item $(iji, RE_kE_k)$:
Here, if the prefix $\DT$ is $(i^{l}ji, \{R,E_i\}^{l} E_kE_k)$, $l \geq 1$, then we transform $\DT$ to $(i^{l}ji, E_i^{l} E_kE_k)$ and we obtain a new prefix rule:
\begin{prefixrule}\
\label{p29}
 (i^{l}ji, E_i^{l} E_kE_k) \rightarrow (i^{l}jij, E_i^{l} E_kRE_k).
\end{prefixrule}

\item $(ijj, RE_jE_j)$: If the prefix of $\DT$ is $(ijj, E_iE_jE_j)$ or $(ijj, R E_j E_j)$, then $(ij, E_iE_j)$ is already not normalized and the prefix rule \eqref{p11} is applicable, hence no new prefix rule is needed.
On the other hand, if the prefix of $\DT$ is $(i^{l+1}jj, \{R,E_i\}^{l+1} E_jE_j)$, $l \geq 1$, then the factor rule is applicable and we obtain a new prefix rule:
\begin{prefixrule}\
\label{p30}
 (i^{l}ijj, E_i^{l}E_iE_jE_j) \rightarrow (i^{l}ijjj, E_i^{l}E_iE_jRE_j).
\end{prefixrule}

\item $(ijk, RE_iE_i)$:
With similar arguments as in the case of $(ijj, RE_jE_j)$, we obtain a new prefix rule:
\begin{prefixrule}\
\label{p31}
 (i^{l}ijk, E_i^{l}E_iE_iE_i) \rightarrow (i^{l}ijkj, E_i^{l}E_iE_iRE_i).
\end{prefixrule}

\end{enumerate}

Now that we solved the case where one pseudopalindromic prefix was missed between $w_n$ and $w_{n+1}$, we will examine the remaining case where two pseudopalindromic prefixes were missed.

\subsection{Missing two pseudopalindromic prefixes}
In this section, we suppose that Assumption~\ref{assumption} is satisfied and that we missed exactly two pseudopalindromic prefixes between $w_n$ and $w_{n+1}$.

We are interested only in the cases where $w_n^{(0)}$ overlaps with $w_n^{(2)}$. If $w_n^{(0)}$ overlaps with $w_n^{(3)}$, then clearly all images of $w_n$ overlap pairwise. If $w_{n+1} =w_n\vartheta_1(w_n)$, then $w_n^{(0)}$ and $w_n^{(2)}$ also overlap. If $w_{n+1} = w_nij\vartheta_1(w_n)$, resp. $w_{n+1} = w_ni\vartheta_1(w_n)$, then the cases where $w_n^{(0)}$ and $w_n^{(2)}$ do not overlap are treated in Proposition \ref{Ospecwijw}, resp. Proposition \ref{Ospecwiw}.

\begin{definition}
Suppose that exactly two palindromic prefixes were missed between the prefixes $w_n$ and $w_{n+1}$ such that $w_n^{(0)}$ and $w_n^{(2)}$ overlap. Furthermore, suppose that the prefix of length $n$ of the directive bi-sequence $\DT$ is normalized. Then the overlap of $w_n^{(i)}$ and $w_n^{(i+1)}$ will be denoted by $p^{(i)}$. The overlap of $p^{(i)}$ and $p^{(i+1)}$ will be denoted by $q^{(i)}$.
\end{definition}

\begin{lemma}
\label{Opiwn-1}
The factor $p^{(i)}$ is an image of $w_{n-1}$ for $i \in \{0,1,2\}$.
\end{lemma}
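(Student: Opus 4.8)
The plan is to reuse the structure of the arguments for Lemma~\ref{L_1_p} and Propositions~\ref{L_1_1} and~\ref{L_1_2}, where the key trick is that an overlap of two occurrences of (an image of) a pseudopalindrome is itself a pseudopalindrome that must sit among the recorded images. Concretely, since exactly two pseudopalindromic prefixes are missed and Assumption~\ref{assumption} holds, Corollary~\ref{CstairsType} tells us that $w_n^{(0)}, w_n^{(0,1)}, w_n^{(0,2)}, w_n^{(0,3)} = w_{n+1}$ are successively $E_i$-, $E_j$-, $E_k$-, $E_i$-palindromes, and by Lemma~\ref{Lstairs} the four occurrences $w_n^{(0)}, w_n^{(1)}, w_n^{(2)}, w_n^{(3)}$ form a staircase in which each $w_n^{(m)}$ is a central factor of $w_n^{(m-1,m+1)}$ and consecutive occurrences overlap in a common length, so each $p^{(i)}$ is a well-defined factor of length $2|w_n| - |w_n^{(0,1)}|$ (the same for all $i$).

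First I would fix $i \in \{0,1,2\}$ and observe that $p^{(i)}$ is the overlap of $w_n^{(i)}$ and $w_n^{(i+1)}$; since both of these are images of $w_n = (w_{n-1}\delta_n)^{\vartheta_n}$ and since $w_{n-1}$ is a prefix of $w_n$, the factor $w_{n-1}^{(c)}$ (the prefix of length $|w_{n-1}|$ of $w_n^{(i+1)}$) is an image of $w_{n-1}$, and likewise the suffix of length $|w_{n-1}|$ of $w_n^{(i)}$ is an image of $w_{n-1}$. The core claim is that $p^{(i)}$ has length exactly $|w_{n-1}|$ and coincides with one of these, which would finish the lemma. To prove this I would argue by contradiction on $|p^{(i)}|$ exactly as in Lemma~\ref{L_1_p}: if $|p^{(i)}| > |w_{n-1}|$, then the prefix occurrence $w_{n-1}^{(p)}$ of $w_{n-1}$ in $w_n^{(i)}$ and the occurrence $w_{n-1}^{(c)}$ in $w_n^{(i+1)}$ overlap, so by Observation~\ref{o_overpal} the factor running from $w_{n-1}^{(p)}$ to $w_{n-1}^{(c)}$ is a pseudopalindrome strictly between $w_n$-length and the next recorded image, producing a missed pseudopalindromic prefix inside the staircase that is not one of $w_n^{(0,1)}$, $w_n^{(0,2)}$ — contradicting that only two are missed (or, if it falls before $w_n$, contradicting normalization of the length-$n$ prefix). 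Symmetrically, if $|p^{(i)}| < |w_{n-1}|$, then $w_n^{(i)}$ ends and $w_n^{(i+1)}$ begins so as to leave a gap shorter than $|w_{n-1}|$; since $|w_{n-1}|$ images of $w_n$ force, via the recurrence and Observation~\ref{o_overpal}, a pseudopalindromic prefix of $w_{n+1}$ shorter than $w_n$ (again using that $w_{n-1}$ is a central factor of the relevant pseudopalindrome), we again contradict normalization of the length-$n$ prefix. Hence $|p^{(i)}| = |w_{n-1}|$; and since $p^{(i)}$ occurs as a suffix of $w_n^{(i)}$, which is $\vartheta(w_n)$ for some $\vartheta \in \{E_0,E_1,E_2,R\}$, that suffix is $\vartheta$ applied to the length-$|w_{n-1}|$ prefix $w_{n-1}$ of $w_n$, i.e.\ an image of $w_{n-1}$.

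The main obstacle I anticipate is handling the degenerate short cases — $w_{n-1} = \varepsilon$ or $|w_{n-1}|$ very small — where "the overlap is an image of $w_{n-1}$" is vacuous or where $w_n$ itself may equal $i$, $iji$, or $ijk$; these need to be checked directly against the constraint that two pseudopalindromic prefixes are missed while the length-$n$ prefix is normalized, which rules most of them out (as in the opening paragraphs of the proofs of Propositions~\ref{L_1_1} and~\ref{L_1_2}). A secondary subtlety is making sure that the "gap too short" contradiction is set up correctly: one must verify that the images $w_n^{(0)}, w_n^{(1)}, w_n^{(2)}, w_n^{(3)}$ really do lie in the staircase configuration of Lemma~\ref{Lstairs} with a common overlap length, so that the putative short pseudopalindromic prefix is genuinely forced; this is exactly where Assumption~\ref{assumption} and the overlap of $w_n^{(0)}$ with $w_n^{(2)}$ (assumed in the definition preceding the lemma) are used.
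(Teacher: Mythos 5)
Your proposal takes essentially the same route as the paper: the paper simply observes that the pair $(w_n^{(0)}, w_n^{(0,2)})$ satisfies the hypotheses of Lemma~\ref{L_1_p} (one missed prefix, namely $w_n^{(0,1)}$), reads off $w_n^{(0,2)} = w_{n-1}p_1^{-1}\vartheta_1(w_{n-1})p_2^{-1}\vartheta_2\vartheta_1(w_{n-1})\vartheta_2(p_1)^{-1}\vartheta_2(w_{n-1})$ with $|p_1|=|p_2|$, and concludes $p^{(0)}=\vartheta_1(w_{n-1})$, the remaining $p^{(i)}$ being images of $p^{(0)}$; you instead inline the overlap-contradiction argument from the proof of Lemma~\ref{L_1_p} and then identify $p^{(i)}$ as the length-$|w_{n-1}|$ suffix of an image of $w_n$. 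That is the same idea, just not packaged as a citation. One slip to fix: you have attached the two contradictions to the wrong inequalities. When $|p^{(i)}| > |w_{n-1}|$ the pseudopalindrome running from $w_{n-1}^{(p)}$ to $w_{n-1}^{(c)}$ has length $|w_n|-|p^{(i)}|+|w_{n-1}| < |w_n|$, so it is a pseudopalindromic prefix \emph{shorter} than $w_n$ and the contradiction is with normalization of the length-$n$ prefix (your parenthetical, not your main claim); when $|p^{(i)}| < |w_{n-1}|$ that factor is \emph{longer} than $w_n$ and shorter than $w_n^{(0,1)}$, so the contradiction is with the count of missed prefixes, not with normalization. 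Also, the ``gap'' scenario you worry about in the second case cannot occur: since $w_n^{(0)}$ overlaps $w_n^{(2)}$ one has $|p^{(i)}| > |w_n|/2 \geq |w_n|-|w_{n-1}|$, so the two occurrences of images of $w_{n-1}$ always overlap and Observation~\ref{o_overpal} applies. With those corrections the argument is sound and matches the paper's.
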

\begin{proof}
Since $w_n^{(0)}$ overlaps with $w_n^{(2)}$, the assumptions of Lemma \ref{L_1_p} are satisfied with $w_n := w_n^{(0)}$ and $w_{n+1} := w_n^{(0,2)}$.

We obtain $w_n^{(0,2)} = w_{n-1}p_1^{-1}\vartheta_1(w_{n-1})p_2^{-1}\vartheta_2\vartheta_1(w_{n-1})\vartheta_2(p_1)^{-1}\vartheta_2(w_{n-1}),$ where $|p_1| = |p_2|$. We have, $w_n^{(0)} = w_{n-1}p_1^{-1}\vartheta_1(w_{n-1})$ and $w_n^{(1)} = \vartheta_1(w_{n-1})p_2^{-1}\vartheta_2\vartheta_1(w_{n-1})$. Hence, the overlap of $w_n^{(0)}$ and $w_n^{(1)}$ is $p^{(0)} = \vartheta_1(w_{n-1})$. It is readily seen that $p^{(1)}$ and $p^{(2)}$ are images of $p^{(0)}$, i.e., they are images of $w_{n-1}$.

\end{proof}

Our further considerations are divided into two cases. Either $p^{(0)}$ overlaps with $p^{(2)}$, which is equivalent to say that $w_n^{(0)}$ and $w_n^{(3)}$ overlap, or it does not. The first case is treated in Lemma \ref{Lqi}, the second one in Proposition \ref{L_2_spec} (it provides us with four new prefix substitution rules).

\begin{lemma}
\label{Lqi}
Suppose that exactly two pseudopalindromic prefixes were missed between $w_n$ and $w_{n+1}$ such that $w_n^{(0)}$ and $w_n^{(2)}$ overlap. Assume that the prefix of length $n$ of the directive bi-sequence $\DT$ is normalized. Furthermore, suppose that $p^{(0)}$ overlaps with $p^{(2)}$. Then $q^{(0)}$ and $q^{(1)}$ are both images of $w_{n-2}$.
\end{lemma}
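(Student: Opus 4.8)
The plan is to iterate the structure already extracted in Lemma~\ref{Opiwn-1} one level deeper. Recall from that lemma that when $w_n^{(0)}$ overlaps $w_n^{(2)}$, the three overlaps satisfy $p^{(0)} = \vartheta_1(w_{n-1})$, and $p^{(1)}$, $p^{(2)}$ are images of $w_{n-1}$; in particular each $p^{(i)}$ is a pseudopalindrome and is an image of the prefix $w_{n-1}$. The extra hypothesis here is that $p^{(0)}$ overlaps with $p^{(2)}$, which, as noted in the paragraph before the lemma, is equivalent to $w_n^{(0)}$ overlapping $w_n^{(3)} = w_{n+1}$ itself — so all four images $w_n^{(0)},\dots,w_n^{(3)}$ overlap pairwise. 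First I would record that, in this situation, the three consecutive overlaps $p^{(0)},p^{(1)},p^{(2)}$ play exactly the role that $w_n^{(0)},w_n^{(1)},w_n^{(2)}$ played one level up: they are three pairwise-overlapping images of $w_{n-1}$ inside $w_n^{(0,3)} \subseteq w_{n+1}$, with $p^{(1)}$ a central factor of the pseudopalindrome spanned by $p^{(0)}$ and $p^{(2)}$ (this centrality is precisely the content of Lemma~\ref{Lstairs} applied to the $p^{(i)}$, since Assumption~\ref{assumption} holds for them by hypothesis that $p^{(0)}$ overlaps $p^{(2)}$).

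Next I would invoke Lemma~\ref{L_1_p} with the substitution $w_n := p^{(0)}$, $w_{n+1} := p^{(0,2)}$ (the factor having $p^{(0)}$ as prefix and $p^{(2)}$ as suffix, which is a pseudopalindrome by Observation~\ref{o_overpal}). To do so I must check the two hypotheses of that lemma: that exactly one pseudopalindromic prefix is missed between $p^{(0)}$ and $p^{(0,2)}$ with Assumption~\ref{assumption} holding, and that the directive bi-sequence restricted to the stage producing $p^{(0)}$ is normalized. The latter follows because $p^{(0)} = \vartheta_1(w_{n-1})$ is an image of $w_{n-1}$ and the prefix of length $n$ of $\DT$ was assumed normalized, so no pseudopalindromic prefix is missed below $w_{n-1}$; the "exactly one missed" claim follows because $p^{(1)}$ is the unique image of $w_{n-1}$ strictly between $p^{(0)}$ and $p^{(2)}$ (any further one would, by Observation~\ref{o_overpal} and the centrality argument of Lemma~\ref{Lstairs}, force an image of $w_n$ between $w_n^{(0,1)}$ and $w_n^{(0,2)}$, contradicting that exactly two pseudopalindromic prefixes were missed between $w_n$ and $w_{n+1}$). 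Lemma~\ref{L_1_p} then yields $p^{(0,2)} = w_{n-1} r_1^{-1}\vartheta(w_{n-1}) r_2^{-1}\vartheta'\vartheta(w_{n-1})\vartheta(r_1)^{-1}\vartheta'(w_{n-1})$ with $|r_1| = |r_2|$, exactly as in the proof of Lemma~\ref{Opiwn-1}; reading off the overlap of $p^{(0)}$ and $p^{(1)}$ gives $q^{(0)} = \vartheta(w_{n-1}\,\cdot)$ — more precisely the overlap is an image of $w_{n-1}$ minus the displacement, hence $q^{(0)}$ is an image of $w_{n-2}$, and $q^{(1)}$, being an image of $q^{(0)}$, is likewise an image of $w_{n-2}$.

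The one point needing care — and what I expect to be the main obstacle — is justifying that $q^{(i)}$ is an image of $w_{n-2}$ rather than merely of $w_{n-1}$; i.e. that the overlaps of consecutive $p^{(i)}$ are genuinely one normalization-level shorter. This is where the normalization hypothesis on the prefix of length $n$ does the real work: if the overlap $q^{(0)}$ had length strictly between $|w_{n-2}|$ and $|w_{n-1}|$, then $q^{(0)}$ would be a pseudopalindromic prefix of $w_{n+1}$ (by Observation~\ref{o_overpal}) of length strictly between $|w_{n-2}|$ and $|w_{n-1}|$, contradicting normalization up to stage $n-1$; and $q^{(0)}$ cannot be shorter than $w_{n-2}$ for the symmetric reason used in part~(2) of the proof of Lemma~\ref{L_1_p}. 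So $|q^{(0)}| = |w_{n-2}|$ exactly, and since $q^{(0)}$ is a prefix of the pseudopalindrome $p^{(0)}$ which is itself an image of $w_{n-1}$, the factor $q^{(0)}$ is an image of the prefix $w_{n-2}$ of $w_{n-1}$. The same argument applied to $p^{(1)}$ (also an image of $w_{n-1}$, with $q^{(1)}$ its appropriately-placed sub-occurrence) shows $q^{(1)}$ is an image of $w_{n-2}$ as well, completing the proof.
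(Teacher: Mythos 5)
Your proposal is correct and follows essentially the same route as the paper: you apply Lemma~\ref{L_1_p} with $w_n := p^{(0)}$ and $w_{n+1} := p^{(0,2)}$, read off from the resulting decomposition that the overlap $q^{(0)}$ of $p^{(0)}$ and $p^{(1)}$ is an image of $w_{n-2}$ (since $p^{(0)}$ is an image of $w_{n-1}$), and conclude for $q^{(1)}$ as an image of $q^{(0)}$. The extra detail you supply in verifying the hypotheses of Lemma~\ref{L_1_p} and pinning down $|q^{(0)}| = |w_{n-2}|$ is consistent with (and slightly more explicit than) the paper's argument; only minor notational slips (e.g.\ writing $w_{n-1}$ where the lower-level prefix $w_{n-1}'$ of $p^{(0)}$ is meant) would need tidying.
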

\begin{proof}
It is easily seen that the word $p^{(0,2)}$ is a prefix of some generalized pseudostandard word. Furthermore, if we make the pseudopalindromic closure $(p^{(0)}\delta)^{\vartheta}$ with $\delta$ and $\vartheta$ satisfying $w_{n+1}=(w_n\delta)^{\vartheta}$, we obtain the word $p^{(0,2)}$, and the word $p^{(0,1)}$ was missed. Thus, $p^{(0)}$ and $p^{(0,2)}$ satisfy the assumptions of Lemma \ref{L_1_p}.

Now, $p^{(0,2)} = w_{n-1}' p_1^{-1}\vartheta_1'(w_{n-1}')p_2^{-1}\vartheta_2'\vartheta_1'(w_{n-1}')\vartheta_2'(p_1)^{-1}\vartheta_2'(w_{n-1}'),$ where $|p_1| = |p_2|$. Using the same arguments as in the proof of Lemma \ref{Opiwn-1}, the overlap of $p^{(0)}$ and $p^{(1)}$ is equal to $\vartheta_1'(w_{n-1}')$. Since $p^{(0)}$ is an image of $w_{n-1}$, then $\vartheta_1'(w_{n-1}')$ is an image of $w_{n-2}$. Thus $q^{(0)}$ is an image of $w_{n-2}$. Since $q^{(1)}$ is an image of $q^{(0)}$, it is an image of $w_{n-2}$, too.
\end{proof}

\begin{proposition}
\label{L_2_spec}
Suppose that exactly two pseudopalindromic prefixes were missed between $w_n$ and $w_{n+1}$ such that $w_n^{(0)}$ and $w_n^{(2)}$ overlap. Assume that the prefix of length $n$ of the directive bi-sequence $\DT$ is normalized. Furthermore, suppose that $p^{(0)}$ does not overlap with $p^{(2)}$. Then either $q^{(0)}$ and $q^{(1)}$ are both images of $w_{n-2}$, or $w_{n+1}$ is of one of the following forms:
\begin{itemize}
\item $(ij)^l i (ki)^l k (jk)^l j (ij)^l i (ki)^l k (jk)^l$

\item $i^lj^{l+1}k^{l+1}i^{l+1}j^{l+1}k^l$

\item $(ij)^l ik(jk)^l ji(ki)^l kj(ij)^l ik (jk)^l ji (ki)^l$

\item $ij(kjij)^{l-1} kjik(ijik)^{l-1} ijikjk(ikjk)^{l-1} ikji(jkji)^{l-1}\ldots$ \\
$\ldots jkjiki(jiki)^{l-1} ji kj(kikj)^{l-1}ki$
\end{itemize}
\end{proposition}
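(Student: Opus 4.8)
The plan is to reduce the two-missed-prefix situation to the one-missed-prefix situation by descending one level, exactly as in the proof of Lemma~\ref{Lqi}. By Lemma~\ref{Opiwn-1}, $p^{(0)} = \vartheta_1(w_{n-1})$, $p^{(1)}$ and $p^{(2)}$ are images of $w_{n-1}$, and $w_n^{(0,2)}$ is obtained from $w_n^{(0)}=w_n$ by a single pseudopalindromic closure whose unique missed prefix is $w_n^{(0,1)}$. First I would set up the lower level: the factor $p^{(0,2)}$ is a prefix of a generalized pseudostandard word, obtained from $p^{(0)}$ by one pseudopalindromic closure in which $p^{(0,1)}$ is missed, and its directive bi-sequence truncated at $p^{(0)}$ is normalized (inherited from the normalized length-$n$ prefix of $\DT$, since $p^{(0)}$ is an image of $w_{n-1}$). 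Since the overlaps $q^{(0)}$ of $p^{(0)},p^{(1)}$ and $q^{(1)}$ of $p^{(1)},p^{(2)}$ are non-empty, the image $p^{(1)}$ overlaps both the prefix occurrence $p^{(0)}$ and the suffix occurrence $p^{(2)}$ inside $p^{(0,2)}$, so Assumption~\ref{assumption} holds at this level; by Lemma~\ref{Lstairs}, $p^{(1)}$ is then a central factor of $p^{(0,2)}$ and $|q^{(0)}|=|q^{(1)}|$.

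Next I would apply Propositions~\ref{L_1_1} and~\ref{L_1_2} with $p^{(0)}$ in the role of $w_n$ and $p^{(0,2)}$ in the role of $w_{n+1}$; they yield a dichotomy. In the first case $p^{(0)}$ has the regular form $w'\,x\,\vartheta'(w')$ with $x$ short (length $1$ or $2$), and then, exactly as in the proof of Lemma~\ref{Lqi}, the overlap $q^{(0)}$ of $p^{(0)}$ and $p^{(1)}$ is an image of $w_{n-2}$, whence $q^{(1)}$, being an image of $q^{(0)}$, is an image of $w_{n-2}$ too — this is the first alternative of the statement. In the second case $(p^{(0)},p^{(0,2)})$ is one of the special forms listed in the second item of Propositions~\ref{L_1_1} and~\ref{L_1_2}; then, using $w_n=w_{n-1}p_1^{-1}p^{(0)}$ together with the explicit overlap relations between $w_{n+1}$, $w_n$, $w_{n-1}$ and $p^{(0,2)}$ (cf. Lemma~\ref{Opiwn-1}) and the fact that by Corollary~\ref{CstairsType} the top-level pseudopalindromic types follow the pattern $E_i,E_j,E_k,E_i$, each special form of $p^{(0,2)}$ determines an explicit form of $w_{n+1}$.

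It then remains to run through all these special forms, substitute them into the expression for $w_{n+1}$, discard the ones incompatible with the standing hypotheses — exactly two missed prefixes between $w_n$ and $w_{n+1}$, $w_n^{(0)}$ overlapping $w_n^{(2)}$ while $p^{(0)}$ does not overlap $p^{(2)}$, and normalization of the length-$n$ prefix — and recognize the coincidences among the survivors; after relabelling the letters so that they appear in the order $i,j,k$, this leaves exactly the four displayed forms of $w_{n+1}$. I expect this enumeration to be the main obstacle: the conceptual content (the one-level descent, resting on Lemmas~\ref{Opiwn-1}, \ref{L_1_p} and the propositions of the previous subsection) is routine, but the bookkeeping over the eleven special forms of Propositions~\ref{L_1_1} and~\ref{L_1_2}, keeping the roles of $p_1$, the antimorphisms and the three letters consistent across the two levels, is where a calculational slip is most likely.
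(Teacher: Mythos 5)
Your overall strategy is the one the paper uses: descend one level, invoke the one-missed-prefix propositions, split into a ``regular'' branch (where $q^{(0)}$, $q^{(1)}$ come out as images of $w_{n-2}$) and a ``special forms'' branch, and use Corollary~\ref{CstairsType} to discard the special forms involving $R$-palindromes before computing $w_{n+1}$. One small structural difference: the paper applies Propositions~\ref{L_1_1} and~\ref{L_1_2} to the pair $(w_{n-1},\, w_n^{(0,1)})$, using the identity $w_n^{(0,1)}=w_{n-1}^{(0,2)}$, so everything stays a prefix of $\mbu\DT$ and $w_{n+1}$ is then obtained in one step as the pseudopalindromic closure of $w_n\delta_{n+1}$; you instead descend to $(p^{(0)},\,p^{(0,2)})$ as in Lemma~\ref{Lqi}. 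Since $p^{(0)}=\vartheta_1(w_{n-1})$ and $p^{(0,2)}$ is an image of $w_n^{(0,1)}$, the two reductions are conjugate under an antimorphism, but yours forces an extra translation layer (carrying the special forms of a shifted pseudostandard word back through that antimorphism to $w_{n+1}$) that the paper avoids.

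There are two genuine gaps. First, you assert without justification that $q^{(0)}$ and $q^{(1)}$ are non-empty in order to conclude that Assumption~\ref{assumption} holds for the pair $(p^{(0)},p^{(0,2)})$; this is exactly the hypothesis that separates the propositions you want to invoke from the non-overlapping special cases (Propositions~\ref{Ospecwijw} and~\ref{Ospecwiw}), so it cannot be taken for granted -- consecutive overlaps $p^{(0)}$ and $p^{(1)}$ intersect only when $2|p^{(0)}|>|w_n|$, which is a real constraint, not an automatic fact. Second, and more importantly, the enumeration you defer \emph{is} the content of the proposition: the claim to be proved is that the exceptional $w_{n+1}$ are exactly the four displayed words. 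The paper's proof spends essentially all of its length running through the surviving special forms of Propositions~\ref{L_1_1} and~\ref{L_1_2} (after excluding those where $w_n$ or $w_n^{(0,1)}$ would be an $R$-palindrome), identifying the pseudopalindromic type of each $w_n$, and computing the corresponding closure $(w_n\delta_{n+1})^{E_\bullet}$ explicitly to obtain each listed $w_{n+1}$ together with the two missed prefixes. Announcing that this ``remains to be run through'' leaves the statement unverified; in particular you have not shown that no fifth form survives and that each of the four listed words actually arises.
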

\begin{proof} If $p^{(0)}$ and $p^{(2)}$ do not overlap, then $w_{n-1}^{(0)}$ and $w_{n-1}^{(2)}$ do no overlap, neither. Therefore, if we put $w_n:=w_{n-1}$ and $w_{n+1}:=w_n^{(0,1)}$, then the assumptions of Proposition \ref{L_1_1} or Proposition \ref{L_1_2} are satisfied. (Notice that $w_{n-1}^{(0,2)}=w_n^{(0,1)}$.)

If $w_n^{(0,1)} = w_{n-1} i \T_2 (w_{n-1})$ and $w_{n-1} = w_{n-2} j \T_1 (w_{n-2})$, then
$$w_n^{(0,1)} = w_{n-2}j  \T_1 (w_{n-2})i \T_2 \T_1 (w_{n-2}) \T_2 (j) \T_2 (w_{n-2})$$
and $q^{(i)}$ is an image of $w_{n-2}$. Similarly in the case where $w_n^{(0,1)} = w_{n-1} ij \T_2 (w_{n-1})$ and $w_{n-1}=w_{n-2}kl\vartheta_1(w_{n-2})$.

Now suppose that $w_n^{(0,1)}$ has one of the rest of the forms of $w_{n+1}$ in Proposition \ref{L_1_1} or Proposition \ref{L_1_2}.
Since we consider the situation of two missed pseudopalindromic prefixes between $w_n$ and $w_{n+1}$, by Corollary \ref{CstairsType}, there exist $i$, $j$, $k$ pairwise different such that $w_n$ and $w_{n+1}$ are $E_i$-palindromes, $w_n^{(0,1)}=E_j(w_n^{(0,1)})$, and $w_n^{(0,2)}=E_k(w_n^{(0,2)})$. Therefore, we can exclude the cases where $w_n=R(w_n)$ or $w_n^{(0,1)}=R(w_n^{(0,1)})$. The remaining cases are:

\begin{itemize}

%
%
%
%
%
%
%
%
%
%
%
%

\item $w_{n-1} = (ij)^l i(ki)^l$:

$w_n^{(0,1)} = (ij)^l i(ki)^l k(jk)^l j(ij)^l$ is an $E_k$-palindrome.

$w_n = (ij)^l i(ki)^l k(jk)^l$ is an $E_j$-palindrome.

Now, the $E_j$-palindromic closure of $w_nj$ is $$(w_nj)^{E_j} = (ij)^l i (ki)^l k (jk)^l j (ij)^l i (ki)^l k (jk)^l = w_{n+1}$$ and we missed the $E_k$-palindrome $w_n^{(0,1)}$ and the $E_i$-palindrome \\$w_n^{(0,2)} =  (ij)^l i (ki)^l k (jk)^l j (ij)^l i (ki)^l$.

%
%

%
%

%
%
%

\item $w_{n-1} = i^lj^l$:

$w_n^{(0,1)} = i^lj^{l+1}k^{l+1}i^{l}$ is an $E_i$-palindrome.

$w_n = i^lj^{l+1}k^l$ is an $E_j$-palindrome.

Now, the $E_j$-palindromic closure of $w_nk$ is $$(w_nk)^{E_j} = i^lj^{l+1}k^{l+1}i^{l+1}j^{l+1}k^l = w_{n+1}$$ and we missed the $E_i$-palindrome $w_n^{(0,1)}$ and the $E_k$-palindrome \\$w_n^{(0,2)} =  i^lj^{l+1}k^{l+1}i^{l+1}j^l$.

\item $w_{n-1} = (ij)^l ik (jk)^l$:

$w_n^{(0,1)} = (ij)^l ik(jk)^l ji(ki)^l kj(ij)^l$ is an $E_k$-palindrome.

$w_n = (ij)^l ik(jk)^l ji(ki)^l$ is an $E_i$-palindrome.

Now, the $E_i$-palindromic closure of $w_nk$ is $$(w_nk)^{E_i} = (ij)^l ik(jk)^l ji(ki)^l kj(ij)^l ik (jk)^l ji (ki)^l = w_{n+1}$$ and we missed the $E_k$-palindrome $w_n^{(0,1)}$ and the $E_j$-palindrome \\$w_n^{(0,2)} =  (ij)^l ik(jk)^l ji(ki)^l kj(ij)^l ik (jk)^l$.

%
%

\item $w_{n-1} = ij(kjij)^{l-1} kjik(ijik)^{l-1} ij$:

$w_n^{(0,1)} = ij(kjij)^{l-1} kjik(ijik)^{l-1} ijikjk(ikjk)^{l-1} ikji(jkji)^{l-1} jk =E_j(w_n^{(0,1)})$.

$w_n = ij(kjij)^{l-1} kjik(ijik)^{l-1} ijikjk (ikjk)^{l-1} i$ is an $E_i$-palindrome.

Now, the $E_i$-palindromic closure of $w_nk$ is

\begin{align*}
(w_nk)^{E_i} &=& ij(kjij)^{l-1} kjik(ijik)^{l-1} ijikjk(ikjk)^{l-1} ikji(jkji)^{l-1} \ldots\\
   & &\ldots  jkjiki(jiki)^{l-1}jikj(kikj)^{l-1}ki
\end{align*}
and we missed the $E_j$-palindrome $w_n^{(0,1)}$ and the $E_k$-palindrome \\$w_n^{(0,2)} =  ij(kjij)^{l-1} kjik(ijik)^{l-1} ijikjk(ikjk)^{l-1} ikji(jkji)^{l-1} jkjiki(jiki)^{l-1} j$.

\end{itemize}


\end{proof}

\subsubsection{Normalization rules}
\noindent {\bf Prefix rules}

\noindent The following prefix substitution rules for the case of two missed pseudopalindromic prefixes between $w_n$ and $w_{n+1}$ are deduced from Propositions \ref{Ospecwijw}, \ref{Ospecwiw}, and \ref{L_2_spec} and its proof:

\begin{itemize}
\item $w_{n+1} = ijki$:
\begin{prefixrule}\
\label{p2}
 (ij, E_iE_i) \rightarrow (ijki, E_iE_kE_jE_i),
\end{prefixrule}

\item $w_{n+1} = ijjkkiijjk$:
\begin{prefixrule}\
\label{p5}
 (ijjk, E_iE_kE_jE_j) \rightarrow (ijjkij, E_iE_kE_jE_iE_kE_j).
\end{prefixrule}

\item $w_{n+1} = ijkij$:
\begin{prefixrule}\
\label{p14}
 (ijk,E_iE_kE_k) \rightarrow (ijkij, E_iE_kE_jE_iE_k).
\end{prefixrule}

\item $w_{n+1} = (ij)^l i (ki)^l k (jk)^l j (ij)^l i (ki)^l k (jk)^l$:
\begin{prefixrule}\
\label{p32}
 (i(ji)^{l}kkj, E_i(E_kR)^{l} E_iE_jE_j)\rightarrow (i(ji)^{l} kkjik, E_i(E_kR)^{l}E_iE_jE_kE_iE_j).
\end{prefixrule}

\item $w_{n+1} = i^lj^{l+1}k^{l+1}i^{l+1}j^{l+1}k^l$:
\begin{prefixrule}\
\label{p33}
 (i^{l} jjk, E_i^{l} E_kE_jE_j) \rightarrow (i^{l}jjkij, E_i^{l}E_kE_jE_iE_kE_j).
\end{prefixrule}

\item $w_{n+1} = (ij)^l ik(jk)^l ji(ki)^l kj(ij)^l ik (jk)^l ji (ki)^l$:
\begin{prefixrule}\
\label{p34}
 (i(ji)^{l} kjk, E_i(E_kR)^{l} E_jE_iE_i) \rightarrow (i(ji)^{l}kjkij, E_i(E_kR)^{l}E_jE_iE_kE_jE_i).
\end{prefixrule}

\item $w_{n+1} = ij(kjij)^{l-1} kjik(ijik)^{l-1} ijikjk(ikjk)^{l-1} ikji(jkji)^{l-1} \ldots$ \\
\hspace*{\fill}$\ldots jkjiki(jiki)^{l-1} jikj(kikj)^{l-1}ki$:\hspace{1em}

\begin{prefixrule}\
\label{p35}
\begin{split}
 (ijk(jj)^{l-1}jkik, E_iE_kE_j(RE_j)^{l-1}RE_kE_iE_i) \rightarrow \\(ijk(jj)^{l-1}jkikji, RE_iE_kE_j(RE_j)^{l-1} E_kE_iE_jE_kE_i).
\end{split}
\end{prefixrule}

\end{itemize}

\noindent {\bf Factor rules}

\noindent The next theorem concludes the section concerning two pseudopalindromic prefixes being missed between $w_n$ and $w_{n+1}$. The last factor substitution rule is obtained.
\begin{theorem}
\label{nonprefix2}
Let $(\Delta,\Theta) = (\delta_1\delta_2\ldots, \vartheta_1\vartheta_2\ldots)$ be a directive bi-sequence having a normalized prefix of length $n$. Moreover, let the prefix of $(\Delta,\Theta)$ of length $n+1$ be different from any prefix on the left side of the prefix rules from \eqref{p1} to \eqref{p35}. Then there are exactly two missed pseudopalindromic prefixes between $w_n$ and $w_{n+1}$ if, and only if, $(\delta_{n-2}\delta_{n-1}\delta_n\delta_{n+1}$, $\vartheta_{n-2}\vartheta_{n-1}\vartheta_n\vartheta_{n+1})$ is of the form $(ab_1b_2b_3, E_iE_jE_kE_k)$, where $E_i(b_1) = E_j(b_2) = E_k(b_3)$.
We obtain the last factor substitution rule:
\begin{itemize}
\item
$(ab_1b_2b_3, E_iE_jE_kE_k) \rightarrow (ab_1b_2b_3b_1b_2, E_iE_jE_kE_iE_jE_k)$, where $E_i(b_1) = E_j(b_2) = E_k(b_3)$.
\end{itemize}
\end{theorem}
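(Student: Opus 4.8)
The plan is to run the argument of Theorem~\ref{nonprefix1} one recurrence level deeper. Since two pseudopalindromic prefixes are now skipped instead of one, the relevant window of $(\Delta,\Theta)$ widens from three positions to four, and the ``base'' object sitting in the centre of all the intermediate palindromes becomes $w_{n-2}$ rather than $w_{n-1}$.

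For $(\Rightarrow)$, assume exactly two pseudopalindromic prefixes are missed between $w_n$ and $w_{n+1}$. First I would eliminate the degenerate configurations: the hypothesis that the length-$(n+1)$ prefix of $(\Delta,\Theta)$ is not a left-hand side of any rule among \eqref{p1}--\eqref{p35} excludes every special family of Propositions~\ref{Ospecwijw}, \ref{Ospecwiw} and~\ref{L_2_spec}; consequently Assumption~\ref{assumption} holds, $w_n^{(0)}$ overlaps $w_n^{(2)}$, the overlaps $p^{(0)},p^{(1)},p^{(2)}$ are images of $w_{n-1}$ by Lemma~\ref{Opiwn-1}, and — combining Lemma~\ref{Lqi} (case $p^{(0)}$ overlaps $p^{(2)}$) with the first alternative of Proposition~\ref{L_2_spec} (case $p^{(0)}$ does not overlap $p^{(2)}$) — the second-order overlaps $q^{(0)},q^{(1)}$ are images of $w_{n-2}$. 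Applying Lemma~\ref{L_1_p} to the pair $(w_n^{(0)},w_n^{(0,2)})$, which has $w_n^{(0,1)}$ as its single missed prefix and satisfies Assumption~\ref{assumption}, and then unrolling the recurrence~\eqref{Dgps} once more through the structure of $w_{n-1}$, one obtains that the doubly-nested staircase is uniform: $w_{n+1}$ is a single regular staircase of images of $w_{n-2}$, consecutive ones overlapping by the same amount, inside which $w_n^{(0,1)}$ and $w_n^{(0,2)}$ are two intermediate pseudopalindromic prefixes. This is the exact analogue, one level deeper, of the explicit form of $w_{n+1}$ written down in the proof of Theorem~\ref{nonprefix1}.

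From this normal form I would read off the directive data. By Corollary~\ref{CstairsType} the prefixes $w_n$, $w_n^{(0,1)}$, $w_n^{(0,2)}$, $w_{n+1}$ are successively an $E_a$-, an $E_b$-, an $E_c$- and an $E_a$-palindrome for pairwise distinct $a,b,c$, so $\vartheta_n=\vartheta_{n+1}$ and this common antimorphism is an exchange antimorphism (the only ambiguity, when $w_n$ is a power of a single letter, being settled by the preprocessing of Remark~\ref{preprocessing}). Since an image of $w_{n-1}$ is a central factor of $w_n^{(0,1)}$ and an image of $w_{n-2}$ is a central factor of $w_n^{(0,2)}$, applying Observations~\ref{o_overpal} and~\ref{o_natureofpalonpal} to $w_{n-1}=\vartheta_{n-1}(w_{n-1})$ and $w_{n-2}=\vartheta_{n-2}(w_{n-2})$ forces $\vartheta_{n-2}$, $\vartheta_{n-1}$, $\vartheta_n$ to be the three exchange antimorphisms $E_0,E_1,E_2$ in some order (any candidate occurrence of $R$ in the window $\vartheta_{n-2}\ldots\vartheta_{n+1}$ is absorbed by an exceptional family or contradicts the normalization of the length-$n$ prefix). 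Writing $\vartheta_{n-2}=E_i$, $\vartheta_{n-1}=E_j$, $\vartheta_n=\vartheta_{n+1}=E_k$ and $\delta_{n-2}\delta_{n-1}\delta_n\delta_{n+1}=ab_1b_2b_3$, computing the central letters of the nested palindromes and imposing that both $w_n^{(0,1)}$ and $w_n^{(0,2)}$ are genuinely skipped by the successive closures yields precisely $E_i(b_1)=E_j(b_2)=E_k(b_3)$, i.e.\ the asserted form $(ab_1b_2b_3,E_iE_jE_kE_k)$. The converse $(\Leftarrow)$ is then the computation in the opposite direction: from the assumed form one unrolls \eqref{Dgps} to obtain the staircase, writes down the two skipped prefixes as $w_n^{(0,1)}=(w_n\delta_{n+1})^{E_i}$ and $w_n^{(0,2)}=(w_n^{(0,1)}\delta_{n-1})^{E_j}$ with $w_{n+1}=(w_n^{(0,2)}\delta_n)^{E_k}$, and checks via Corollary~\ref{CstairsType} that exactly two — not one, not three — prefixes are missed; reassembling the inserted directive data gives the factor rule $(ab_1b_2b_3,E_iE_jE_kE_k)\to(ab_1b_2b_3b_1b_2,E_iE_jE_kE_iE_jE_k)$.

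The step I expect to be the main obstacle is establishing the uniform doubly-nested staircase: one has to argue that, once the families of Propositions~\ref{Ospecwijw}, \ref{Ospecwiw} and~\ref{L_2_spec} have been removed, the two superposed staircases are compatible enough for Lemma~\ref{L_1_p} to be applied recursively, and one has to track, level by level, which image of $w_{n-2}$ occupies the centre of each of $w_{n-1}$, $w_n$, $w_n^{(0,1)}$, $w_n^{(0,2)}$ and $w_{n+1}$ — including the somewhat fiddly verification that $R$ cannot occur anywhere in the window $\vartheta_{n-2}\ldots\vartheta_{n+1}$. Once that structural picture is in place, the antimorphism pattern, the letter relations $E_i(b_1)=E_j(b_2)=E_k(b_3)$ and the substitution rule all follow exactly as in Theorem~\ref{nonprefix1}.
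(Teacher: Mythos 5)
Your proposal is correct and follows essentially the same route as the paper: Corollary~\ref{CstairsType} fixes the $E_i$-, $E_j$-, $E_k$-, $E_k$-pattern of antimorphisms, Lemma~\ref{Opiwn-1} and Lemma~\ref{Lqi}/Proposition~\ref{L_2_spec} identify the overlaps $p^{(0)}$ and $q^{(0)}$ as images of $w_{n-1}$ and $w_{n-2}$, Observation~\ref{o_natureofpalonpal} transfers the palindromic types down to $w_{n-1}$ and $w_{n-2}$, and tracking the letters adjacent to these central factors yields $E_i(b_1)=E_j(b_2)=E_k(b_3)$, with the converse obtained by unrolling the closure explicitly. The only difference is presentational: the paper does not build your full ``doubly-nested staircase'' in the forward direction but extracts the letter relations directly from the positions of $p^{(0)}$ and $q^{(0)}$ inside $w_n$, $w_n^{(0,1)}$ and $w_n^{(0,2)}$.
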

\begin{proof}

$(\Rightarrow)$:
Two pseudopalindromic prefixes were missed between $w_n$ and $w_{n+1}$. Thus, by Corollary \ref{CstairsType}, $w_n = E_k(w_n)$, $w_n^{(0,1)} = E_i( w_n^{(0,1)})$, $w_n^{(0,2)} = E_j (w_n^{(0,2)})$, and $w_{n+1} = E_k(w_{n+1})$ for pairwise different $i,j,k$.

The word $p^{(0)}$ is a central factor of $w_n^{(0,1)}$, thus it is an $E_i$-palindrome, too, and, moreover, $p^{(0)}= E_k(w_{n-1})$ by Lemma \ref{Opiwn-1} and its proof. Consequently, by Observation \ref{o_natureofpalonpal}, $w_{n-1} = E_j(w_{n-1})$. Analogously, we deduce that $w_{n-2} = E_i(w_{n-2})$.

	\begin{figure}[ht!]
		\begin{center}
			\includegraphics[scale=1.1]{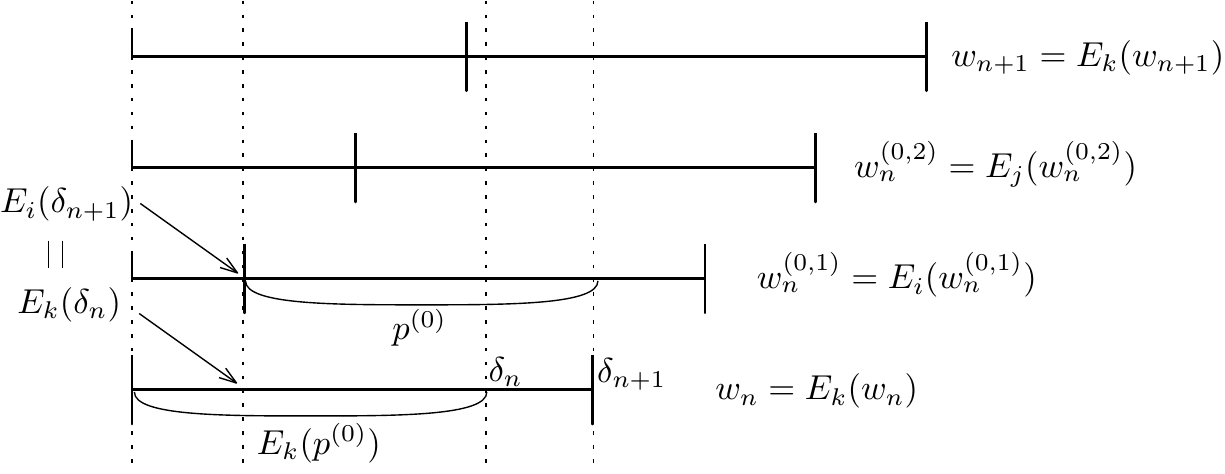}
		\end{center}
		\caption{Illustration of the relation between $\delta_n$ and $\delta_{n+1}$.}
		\label{OT21}
	\end{figure}

We will now find the relations of $\delta_{n-1}$, $\delta_n$, and $\delta_{n+1}$. For a better understanding see Figure \ref{OT21}. The prefix $w_n$ (and also $p^{(0)}$) is followed by the letter $\delta_{n+1}$. Since $p^{(0)}$ is a central factor of the $E_i$-palindrome $w_n^{(0,1)}$, $p^{(0)}$ is preceded by the letter $E_i(\delta_{n+1})$. In addition, the prefix $w_{n-1} = E_k(p^{(0)})$ of $w_n$ is followed by the letter $\delta_n$. Since $w_n$ is an $E_k$-palindrome, $p^{(0)}$ is also preceded by the letter $E_k(\delta_n)$. We obtain the equality $E_k(\delta_n) = E_i(\delta_{n+1})$.

Using the suffix $q^{(0)}$ of the word $w_n$ and the $E_j$-palindrome $w_n^{(0,2)}$, the equality $E_k(\delta_{n-1}) = E_j(\delta_{n+1})$ can be deduced analogously. Overall, we obtain:
$$E_i(\delta_{n-1}) = E_iE_kE_j(\delta_{n+1}) = E_k(\delta_{n+1}) = E_kE_iE_k(\delta_n) = E_j(\delta_n).$$

$(\Leftarrow)$:
Knowing the form of $(\delta_{n-2}\delta_{n-1}\delta_n\delta_{n+1}, \vartheta_{n-2}\vartheta_{n-1}\vartheta_n\vartheta_{n+1})$, we can easily deduce that $E_j(w_{n-2})$ is the longest $E_k$-palindromic suffix used when constructing $w_n$. Thus
\begin{equation} \label{eq:wn}
w_n = w_{n-1}(E_j(w_{n-2}))^{-1}E_k(w_{n-1}).
\end{equation}

	\begin{figure}[ht!]
		\begin{center}
			\includegraphics[scale=1.1]{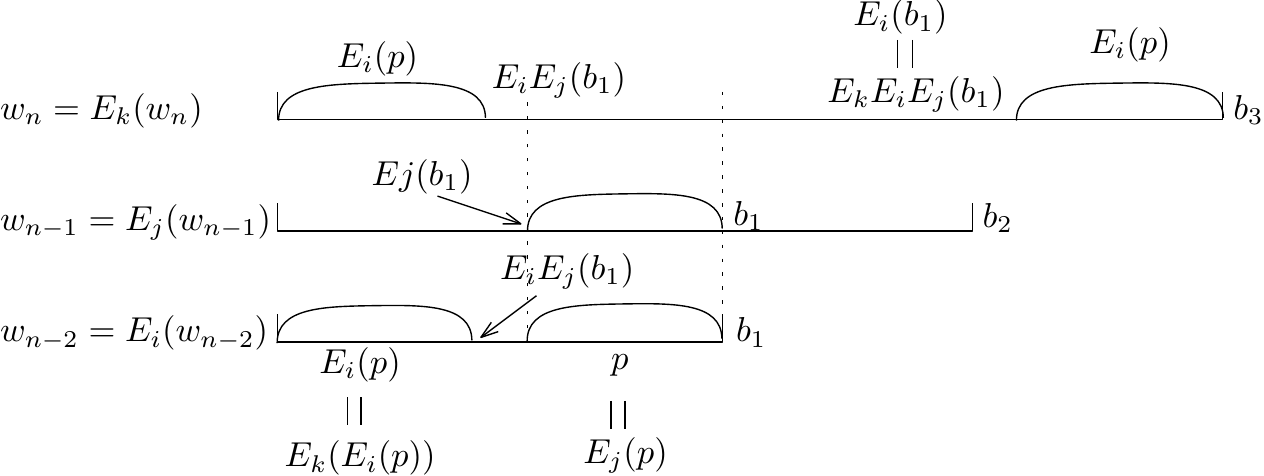}
		\end{center}
		\caption{Finding the longest $E_k$-palindromic suffix of $w_nb_3$.}
		\label{O1T22}
	\end{figure}
	
Now, let us look for the longest $E_k$-palindromic suffix of $w_nb_3$, see Figure \ref{O1T22}. When constructing $w_{n-1}$, we looked for the longest suitable $E_j$-palindromic suffix of $w_{n-2}$ -- let $p$ denote this suffix. Hence, $w_{n-1} = w_{n-2}p^{-1}E_j(w_{n-2})$, from which we have
$$p = (w_{n-2}^{-1}w_{n-1}E_j(w_{n-2})^{-1})^{-1}.$$
Since $w_{n-2}$ is an $E_i$-palindrome, $E_i(p)$ is an $E_k$-palindromic prefix of $w_{n-2}$. Since $p$ is followed by $b_1$, it is preceded by $E_j(b_1)$, consequently, $E_i(p)E_iE_j(b_1)$ is a prefix of $w_n$.

Applying the antimorphism $E_i$ on the previous equation, we obtain:
\begin{equation} \label{eq:sufix}
E_i(p) = ((E_k(w_{n-2}))^{-1}E_i(w_{n-1})(w_{n-2})^{-1})^{-1}.
\end{equation}
Since $w_n$ is an $E_k$-palindrome and $E_i(p)$ is an $E_k$-palindromic prefix of $w_n$, then $E_i(p)$ is also an $E_k$-palindromic suffix of $w_n$. Moreover, the suffix $E_i(p)$ of $w_n$ is preceded by the letter $E_kE_iE_j(b_1) = E_i(b_1)$ and followed by the letter $b_3$. Thus, $E_i(p)$ is the longest suitable $E_k$-palindromic suffix of $w_n$, where we used the fact that the prefix of length $n$ of the directive bi-sequence is normalized, and the assumption that $E_i(b_1) = E_k(b_3)$.
Combining the equations~\eqref{eq:wn} and \eqref{eq:sufix}, we obtain:
\begin{align} \label{eq:wn1}
\begin{split}
w_{n+1}  = \;\, & w_{n-1}(E_j(w_{n-2}))^{-1}E_k(w_{n-1})(E_k(w_{n-2}))^{-1}E_i(w_{n-1})(w_{n-2})^{-1} \\
 & w_{n-1}(E_j(w_{n-2}))^{-1}E_k(w_{n-1}).
\end{split}
\end{align}
The two missed pseudopalindromic prefixes can be easily found in \eqref{eq:wn1}:
\begin{align}
w_n^{(0,1)}=&\;\, w_{n-1}(E_j(w_{n-2}))^{-1}E_k(w_{n-1})(E_k(w_{n-2}))^{-1}E_i(w_{n-1})=E_i(w_n^{(0,1)}), \nonumber \\
w_n^{(0,2)}=&\;\, w_{n-1}(E_j(w_{n-2}))^{-1}E_k(w_{n-1})(E_k(w_{n-2}))^{-1}E_i(w_{n-1}) \nonumber \\
& \;\,(w_{n-2})^{-1}w_{n-1}=E_j(w_n^{(0,2)}). \nonumber
\end{align}
Finally, we obtain the factor substitution rule knowing the equalities for $\delta_{n-1}$, $\delta_n$, and $\delta_{n+1}$, and the missed pseudopalindromic prefixes:
\begin{itemize}
\item
$(ab_1b_2b_3, E_iE_jE_kE_k) \rightarrow (ab_1b_2b_3b_1b_2, E_iE_jE_kE_iE_jE_k)$, where $E_i(b_1) = E_j(b_2) = E_k(b_3)$.
\end{itemize}
\end{proof}

\subsection{Final algorithm}\label{FinalAlgorithm}

Before presenting the algorithm, we will compile the final list of prefix substitution rules. The rules \eqref{p6}, \eqref{p7}, and \eqref{p5} were removed because they were special cases of the rules \eqref{p25}, \eqref{p23}, and \eqref{p33}, respectively. The rule \eqref{p10} was merged with the rule \eqref{p16}, and the rule \eqref{p11} was merged with the rule \eqref{p13}. Moreover, for a better readability, the index $l$ was changed to $n+1$. Thus, $n$ can take any non-negative integer value. The condition in the rule \eqref{p3} was removed by incrementing the index by one.

\begin{definition} \label{def:rules}
\textit{A normalization prefix rule} is one of the following set of prefix substitution rules:
\begin{enumerate}
\item
$(i^{n+1}, E_i^nE_k) \rightarrow (i^{n+1}j, E_i^{n+1}E_k)$,

\item
$(i(ji)^{n+1}j, E_i(E_kR)^{n+1}E_i) \rightarrow (i(ji)^{n+1}jk, E_i(E_kR)^{n+1}E_kE_i)$,
\item
$((ij)^{n+1}i, E_iE_k(RE_k)^nE_j) \rightarrow ((ij)^{n+1}ik, E_iE_k(RE_k)^nRE_j)$,
\item
$(ijk(jj)^nj, E_iE_kE_j(RE_j)^nE_k) \rightarrow (ijk(jj)^njk, E_iE_kE_j(RE_j)^nRE_k)$,
\item
$(ijkj(jj)^nj, E_iE_kE_jR(E_jR)^nE_i) \rightarrow (ijkj(jj)^nji, E_iE_kE_jR(E_jR)^nE_jE_i)$,
\item
$(ij(ij)^ni, E_iE_k(RE_k)^nE_i) \rightarrow (ij(ij)^nik, E_iE_k(RE_k)^nRE_i)$,
\item
$(i(ji)^nj,E_i(E_kR)^nE_j) \rightarrow (i(ji)^njk, E_i(E_kR)^nE_kE_j)$,
\item
$((ijk)^nijk, (E_iE_kE_j)^nE_iE_kR) \rightarrow ((ijk)^{n+1}j, (E_iE_kE_j)^{n+1}R)$,
\item
$((ijk)^nij, (E_iE_kE_j)^nE_iR) \rightarrow ((ijk)^niji, (E_iE_kE_j)^nE_iE_kR)$,
\item
$((ijk)^{n+1}i, (E_iE_kE_j)^{n+1}R) \rightarrow ((ijk)^{n+1}ik, (E_iE_kE_j)^{n+1}E_iR)$,
\item
$((ijk)^{n+1}jk, (E_iE_kE_j)^{n+1}RR) \rightarrow ((ijk)^{n+1}jkk, (E_iE_kE_j)^{n+1}RE_kR)$,
\item
$((ijk)^{n+1}ikk, (E_iE_kE_j)^{n+1}E_iRR) \rightarrow ((ijk)^{n+1}ikki, (E_iE_kE_j)^{n+1}E_iRE_jR)$,
\item
$((ijk)^nijik, (E_iE_kE_j)^nE_iE_kRR) \rightarrow ((ijk)^nijikj, (E_iE_kE_j)^nE_iE_kRE_iR)$,
\item
$(i(ji)^njkj, E_i(E_kR)^nE_kE_jE_j) \rightarrow (i(ji)^njkjj, E_i(E_kR)^nE_kE_jRE_j)$,
\item
$(ij(ij)^nikk, E_iE_k(RE_k)^nRE_iE_k) \rightarrow (ij(ij)^nikkj, E_iE_k(RE_k)^nRE_iE_jE_k)$,
\item
$(i^{n+1}jj, E_i^{n+1}E_kE_k) \rightarrow (i^{n+1}jji, E_i^{n+1}E_kRE_k)$,
\item
$(ij(ij)^{n+1}kk, E_iE_k(RE_k)^{n+1}E_iE_i) \rightarrow (ij(ij)^{n+1}kkj, E_iE_k(RE_k)^{n+1}E_iRE_i)$,
\item
$(i^{n+1}jj, E_i^{n+1}E_kE_i) \rightarrow (i^{n+1}jjk, E_i^{n+1}E_kE_jE_i)$,
\item
$(ij(ij)^nikj, E_iE_k(RE_k)^nRE_jE_k) \rightarrow (ij(ij)^nikjk, E_iE_k(RE_k)^nRE_jE_iE_k)$,
\item
$(ijk(jj)^nik, E_iE_kE_j(RE_j)^nE_iE_i) \rightarrow (ijk(jj)^nikj, E_iE_kE_j(RE_j)^nE_iRE_i)$,
\item
$(ijk(jj)^njki, E_iE_kE_j(RE_j)^nRE_kE_j) \rightarrow \\ (ijk(jj)^njkik, E_iE_kE_j(RE_j)^nRE_kE_iE_j)$,
\item
$(i^{n+1}ji, E_i^{n+1} E_kE_k) \rightarrow (i^{n+1}jij, E_i^{n+1} E_kRE_k)$,
\item
$(i^{n+1} ijj, E_i^{n+1} E_iE_jE_j) \rightarrow (i^{n+1} ijjj, E_i^{n+1} E_iE_jRE_j)$,
\item
$(i^{n+1}ijk, E_i^{n+1}E_iE_iE_i) \rightarrow (i^{n+1}ijkj, E_i^{n+1}E_iE_iRE_i)$,

\item
$(ij, E_iE_i) \rightarrow (ijki, E_iE_kE_jE_i)$,
\item
$(ijk,E_iE_kE_k) \rightarrow (ijkij, E_iE_kE_jE_iE_k)$,
\item
$(i(ji)^{n+1}kkj, E_i(E_kR)^{n+1} E_iE_jE_j)\rightarrow \\ (i(ji)^{n+1} kkjik, E_i(E_kR)^{n+1}1E_iE_jE_kE_iE_j))$,
\item
$(i^{n+1} jjk, E_i^{n+1} E_kE_jE_j) \rightarrow (i^{n+1}jjkik, E_i^{n+1}E_kE_jE_iE_kE_j))$,
\item
$(i(ji)^{n+1} kjk, E_i(E_kR)^{n+1} E_jE_iE_i) \rightarrow (i(ji)^{n+1}kjkij, E_i(E_kR)^{n+1}E_jE_iE_kE_jE_i)$,
\item
$(ijk(jj)^{n}jkik, E_iE_kE_j(RE_j)^nRE_kE_iE_i) \rightarrow \\ (ijk(jj)^{n}jkikji, E_iE_kE_j(RE_j)^n E_kE_iE_jE_kE_i)$.

\end{enumerate}
\end{definition}
Furthermore, the factor substitution rules from Theorem \ref{nonprefix1} and \ref{nonprefix2} will be reminded:
\begin{definition}
\label{factorrules}
\textit{A normalization factor rule} is one of the following set of factor substitution rules:
\begin{enumerate}
\item
$(ab_1b_2, RE_iE_i) \rightarrow (ab_1b_2b_1, RE_iRE_i),  \text{ where } b_1 = E_i(b_2)$,
\item
$(ab_1b_2, E_iRR) \rightarrow (ab_1b_2b_1, E_iRE_iR), \text{ where } b_1 = E_i(b_2)$,
\item
$(ab_1b_2, E_iE_jE_i) \rightarrow (ab_1b_2E_iE_j(b_1), E_iE_jE_kE_i), \text{ where } E_i(b_1)= E_j(b_2)$,
\item
$(ab_1b_2b_3, E_iE_jE_kE_k) \rightarrow (ab_1b_2b_3b_1b_2, E_iE_jE_kE_iE_jE_k)$, where $E_i(b_1) = E_j(b_2) \\ = E_k(b_3)$.
\end{enumerate}
\end{definition}

Let $\DT$ be any ternary directive bi-sequence. The normalization algorithm of $\DT$ will be described in the sequel:

\begin{enumerate}
\item Find the length $l$ of the longest prefix of $\DT$ such that $\Delta$ contains only the letter $i$ and $\Theta$ contains only the antimorphisms $R$ and $E_i$. Modify the prefix of $\Theta$ to $E_i^l$.

\item Check whether some normalization prefix rules of Definition \ref{def:rules} or some normalization factor rules of Definition \ref{factorrules} are applicable. If there are none, $\DT$ is normalized. If there are any, apply the rule that can be used on the shortest prefix of $\DT$. Repeat step 2 until $\DT$ is normalized.
\end{enumerate}

The second step of the algorithm does not necessarily end after a final number of steps, but with every step, a strictly longer normalized prefix of $\DT$ is obtained.

Finally, an example illustrates the algorithm.
\begin{example}
Let $\DT$ be $(010221011^{\omega}, RRE_0E_2E_1E_2E_1E_0E_2^{\omega})$. The normalization algorithm proceeds in the following steps:
\begin{itemize}
	\item First, changing the prefix of $\Theta$: $(010221011^{\omega}, E_0RE_0E_2E_1E_2E_1E_0E_2^{\omega})$.
	\item Applying the normalization prefix rule 9:\\ $((012)^0 01, (E_0E_2E_1)^0 E_0 R) \to$ $((012)^0 010, (E_0E_2E_1)^0 E_0 E_2R)$:
	$$ (0100221011^{\omega}, E_0E_2RE_0E_2E_1E_2E_1E_0E_2^{\omega}).$$
	\item Applying the normalization factor rule 3: $(210, E_1 E_2 E_1) \rightarrow (2102, E_1E_2E_0E_1)$:
	$$ (01002210211^{\omega}, E_0E_2RE_0E_2E_1E_2E_0E_1E_0E_2^{\omega}). $$
	
	\item Applying the normalization factor rule 3: $(021, E_0 E_1 E_0) \rightarrow (0210, E_0E_1E_2E_0)$:
	$$ (0100221021011^{\omega}, E_0E_2RE_0E_2E_1E_2E_0E_1E_2E_0E_2^{\omega}). $$
\end{itemize}

None of the rules can be applied further on, therefore \\ $(0100221021011^{\omega}$,
$E_0E_2RE_0E_2E_1E_2E_0E_1E_2E_0E_2^{\omega})$ is the normalized bi-sequence of $\mbu \DT$.
\end{example}

\section{Implementation} \label{Implementation}
Alongside our theoretical work, we implemented and tested the new normalization algorithm presented in Section~\ref{FinalAlgorithm}. The documented code and examples are publicly available at

\begin{center}
https://github.com/velkater/tgpc
\end{center}

Comparing the new normalization algorithm to a naive normalization algorithm helped to obtain the final set of normalization rules.

\subsection{Implementation of the normalization algorithm}
In this section, the key aspects of our implementation are presented. We implemented the normalization algorithm as a Python 3 module called \texttt{tgpc}, standing for ``ternary generalized pseudopalindromic closures'', that can be found on the provided link.

The new normalization algorithm of a ternary directive bi-sequence $\DT$ is implemented in the method \texttt{normalize} of the object \texttt{Normalizer012}. The input is a string representing $\Delta$ composed of the letters $0$, $1$, $2$, and a string representing $\Theta$ composed of the letters $R$, $0$, $1$, $2$, standing for the involutory antimorphisms $R$, $E_0$, $E_1$, and $E_2$.

\subsection{Preprocessing of the directive bi-sequence}
In order to make the algorithm easier to read and write, we decided to work only with generalized pseudostandard words that have $0$ as the first occurring letter, $1$ as the second one, and $2$ as the third one. Naturally, we want our algorithm to work correctly for all directive bi-sequences. That is why, at the beginning of the function \texttt{normalize}, the function \texttt{\_change\_letters\_order} processes the given directive bi-sequence. The resulting bi-sequence $\Delta'$ and $\Theta'$ generates the same generalized pseudostandard word, except that the letters $0$, $1$, and $2$ appear first in this order.

It is easily seen that the processing of the bi-sequence described above can be done without having to compute the generated generalized pseudostandard word. First, we want to change the first letter to $0$: if the first letter appearing in $\Delta$ is not $0$ but $a \in \{1,2\}$, then we have to substitute $0 \rightarrow a$ in both $\Delta$ and $\Theta$. Now, while the prefix of $\DT$ is $(0^l, E_0^l)$, the order of letters cannot be decided. Let $\delta$ and $\vartheta$ be the first letters following the longest prefix of the form $(0^l, E_0^l)$. If $\delta$ is $0$ and $\vartheta$ is $E_2$, the resulting word has the desired letter order. If $\delta$ is $0$ and $\vartheta$ is $E_1$, then we have to apply the substitution $\{1 \rightarrow 2, 2 \rightarrow 1\}$ to both $\Delta$ and $\Theta$.  If $\delta$ is $1$, then the resulting word has also the desired letter order. And, finally, if $\delta$ is $2$, the substitution $\{1 \rightarrow 2, 2 \rightarrow 1\}$ has to be applied.

At the end of the algorithm, a reverse substitution is applied to the new normalized directive bi-sequence to obtain the original order of letters.

\subsection{Normalization algorithm}
The preprocessed directive bi-sequence $(\Delta', \Theta') = (\delta_1 \delta_2 \ldots, \vartheta_1 \vartheta_2 \ldots)$ is represented as the string $\delta_1 \vartheta_1 \delta_2 \vartheta_2 \ldots$. The normalization algorithm from Section~\ref{FinalAlgorithm} can be now applied to $(\Delta', \Theta')$:

\begin{enumerate}
\item First, the private function \texttt{\_initial\_normalization(biseq)} finds the longest prefix of $(\Delta', \Theta')$ such that $\Delta'$ contains only the letter $0$ and $\Theta'$ contains only the antimorphisms $R$ and $E_0$ using a regular expression, and replaces all occurrences of $R$ by $E_0$ inside $\Theta'$.

\item The private \texttt{\_Normalization012\_rules\_checker} object is used to check if some normalization rule is applicable. If it is, it returns the next normalization rule to apply. The rule is applied and the newly corrected directive bi-sequence is presented again to the \texttt{\_Normalization012\_rules\_checker}. This process continues until no normalization rule is applicable.

\end{enumerate}
Let $(\widetilde{\Delta}, \widetilde{\Theta})$ be the normalized directive bi-sequence of $\DT$.
The method \texttt{normalize} returns the string representing $\widetilde{\Delta}$, the string representing $\widetilde{\Theta}$, and a boolean \texttt{notchanged}, which is equal to \texttt{true} if the sequence $\DT$ was already normalized, and false otherwise.

We will now briefly describe the \texttt{\_Normalization012\_rules\_checker} object. Its role is to check if a normalization rule can be applied on a given directive bi-sequence $\delta_1 \vartheta_1 \delta_2 \vartheta_2 \ldots$, to decide what is the next rule to apply, and to return the corresponding correction and the position where to apply it. This work is done by its public method \texttt{find\_applicable\_rule}.

The next simple observation explains the logic of this function:
\begin{observation}
Only one normalization prefix rule can be applied on a directive bi-sequence $\DT$. Moreover, if a normalization prefix rule can be applied on $\DT$, then no normalization factor rule can be applied on $\DT$.
\end{observation}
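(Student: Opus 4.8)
The two claims are both reductions of the bookkeeping already carried out in Section~\ref{Normalization} to a finite check on the left-hand sides appearing in Definitions~\ref{def:rules} and~\ref{factorrules}. The plan is to first isolate the following fact about an arbitrary preprocessed $\DT$ (preprocessing as in Remark~\ref{preprocessing}) that is \emph{not} normalized: by Definition~\ref{Dnormalized} and Theorem~\ref{Ttwopseudo} there is a smallest index $N+1$ at which one or two pseudopalindromic prefixes are missed between $w_N$ and $w_{N+1}$; for all $m\le N$ the prefixes $w_0,\dots,w_m$ then contain every pseudopalindromic prefix of $\mbu\DT$ of length at most $|w_m|$, so the prefix of $\DT$ of length $N$ is normalized and the one of length $N+1$ is not. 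I would then verify, rule by rule, that every prefix rule of Definition~\ref{def:rules} is calibrated so that, whenever its left-hand side (for some value of its parameter and some choice of the letters $i,j,k$) is a prefix of $\DT$, that left-hand side is exactly the prefix of $\DT$ of length $N+1$: the parameter is the repetition count of the periodic block occurring there, and $i,j,k$ are the actual letters of $\Delta$. This step amounts to reconciling the slightly different local indexings used in Propositions~\ref{Ospecwijw}, \ref{Ospecwiw}, \ref{L_1_1}, \ref{L_1_2}, \ref{L_2_spec} and in the preprocessing, and it is the linchpin of both claims.

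For the first claim, once this calibration is in hand, two applicable prefix rules would force the single word $\delta_1\vartheta_1\cdots\delta_{N+1}\vartheta_{N+1}$ to be a common instance of two distinct left-hand-side families of Definition~\ref{def:rules}. I would exclude this by a pairwise comparison of the thirty families: most pairs already differ among their first two or three antimorphisms — one compares the first position at which the antimorphism is neither the leading $E_i$ nor $R$, and whether that position is followed by $R$, by a repetition of itself, or by two further distinct exchange antimorphisms — and the remaining pairs are separated either by the letters of $\Delta$ or by the length of the periodic block; two instances of one rule with different parameters fork at the position where the shorter periodic block ends. Hence no $\DT$ matches two distinct prefix left-hand sides.

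For the second claim I would invoke the exclusion clause built into Theorems~\ref{nonprefix1} and~\ref{nonprefix2}: under the standing hypothesis that the prefix of length $n$ is normalized and that the prefix of length $n+1$ is not a left-hand side of any prefix rule, a factor rule is triggered at the window ending at $n+1$ if and only if a pseudopalindromic prefix is missed between $w_n$ and $w_{n+1}$. If a prefix rule is applicable to $\DT$ then, by the calibration, the prefix of length $N$ is normalized, the one of length $N+1$ is that prefix rule's left-hand side, and no prefix-rule left-hand side occurs as a strictly shorter prefix of $\DT$. Consequently, for every window with $n\le N-1$ nothing is missed between $w_n$ and $w_{n+1}$, so by the ``only if'' direction of Theorems~\ref{nonprefix1} and~\ref{nonprefix2} no factor rule fires there; and for the window $n=N$ the factor-rule hypothesis fails precisely because the prefix of length $N+1$ is a prefix-rule left-hand side. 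Thus the rule-checker, which reports the rule acting on the shortest prefix, has no factor rule to report and returns the (unique) applicable prefix rule. I would finish by checking directly that none of the four factor-rule antimorphism patterns $RE_iE_i$, $E_iRR$, $E_iE_jE_i$, $E_iE_jE_kE_k$ (with the stated relations between $b_1,b_2,b_3$) occurs among the first $N+1$ symbols of any prefix-rule left-hand side.

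The conceptual content is therefore already finished in Section~\ref{Normalization}; the one genuine obstacle is the finite bookkeeping — establishing the pairwise disjointness of the thirty parametrized left-hand sides, and making the ``left-hand side equals the prefix of length $N+1$'' calibration uniform across every rule, which is delicate only because several propositions use incompatible local indexings. This is exactly the type of exhaustive verification that was cross-checked in the implementation against the naive normalization algorithm.
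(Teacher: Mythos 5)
Your proposal is correct and rests on exactly the fact the paper's one-sentence proof invokes: every prefix-rule left-hand side fails to be normalized only at its final position, so any applicable prefix rule must end at the unique first failure index, which both forces uniqueness and blocks the factor rules (whose triggering theorems explicitly exclude prefixes matching a prefix-rule left side). You unfold this into more explicit bookkeeping (pairwise disjointness of the thirty families, window-by-window exclusion of factor rules), but the underlying argument is the same as the paper's.
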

\begin{proof}
The statement is a direct corollary of the fact that the left sides of the normalization prefix rules are not normalized, but their prefixes without the last letter in each directive sequence are normalized.
\end{proof}
Note that the observation does not say that if we apply a normalization prefix rule, then no other normalization rule can be applied to the modified directive bi-sequence. This is not true in general.

First, the function \texttt{find\_applicable\_rule} checks if a normalization prefix rule is applicable. If it is, it returns the correction and the position to apply it. If not, it looks through normalization factor rules and finds the next factor normalization rule to be applied. It also computes the correction of the factor rule. Finally, it returns the correction and the position to be corrected. If no normalization rule is applicable, it returns \texttt{None}.

The normalization prefix rules and the normalization factor rules are represented by regular expressions to be matched on the directive bi-sequence $\delta_1 \vartheta_1 \delta_2 \vartheta_2 \ldots$. The normalization prefix rules given in Definition \ref{def:rules} are written so that the order of the letters in the resulting word is always $i$, $j$, and $k$. Since we have a fixed letter order, the prefix rules can be obtained by taking the $30$ prefix rules, replacing $i$ by $0$, $j$ by $1$, and $k$ by $2$ inside them, and finding their corresponding regular expressions. Here are the first regular expressions for the normalization prefix rules as an example:

\definecolor{keywords}{RGB}{0,46,184}
\definecolor{comments}{RGB}{0,0,113}
\definecolor{red}{RGB}{160,0,0}
\definecolor{green}{RGB}{0,100,0}
\definecolor{black}{RGB}{0,0,0}
\definecolor{navy}{RGB}{0,0,128}

\lstset{language=Python,
	basicstyle=\ttfamily\small,
	keywordstyle=\color{green},
	commentstyle=\color{red},
	stringstyle=\color{Violet},
	showstringspaces=false,
	identifierstyle=\color{navy},
	procnamekeys={def,class},
	procnamestyle=\bfseries\color{RoyalBlue}}
	
\begin{lstlisting}
 _bad_prefixes_and_correction = (
            ("(00)*02", "0012", 1),
            ("0010", "122100", 2),
            ("00(120R)+10", "1220", 3),
            ("0012(0R12)*01", "0R21", 4),
            ("001221(1R11)*12", "1R22", 5),
            ("0012211R(111R)*10", "1100", 6),
            ...
\end{lstlisting}

For example, the first normalization rule is represented as \texttt{"(00)*02"} corresponding to the first prefix rule $(0^{n+1}, E_0^nE_2) \rightarrow (0^{n+1}1, E_0^{n+1}E_2)$. The substring \texttt{(00)*} means that the factor $(0, E_0)$ can occur $0$, $1$ or more times and then it has to be followed by $(0, E_2)$. Each normalization prefix rule is also followed by the correction to apply on the last two letters of the matched string. Here, \texttt{02} is replaced by \texttt{0012}, which produces exactly  $(0^{n+1}1, E_0^{n+1}E_2)$.

The left sides of the normalization factor rules are generated inside the private function \texttt{\_generate\_factor\_rules} that finds all possibilities for each of the four rules. For example, the first possible forms of the left side of the first factor rule $(ab_1b_2, RE_iE_i) \rightarrow (ab_1b_2b_1, RE_iRE_i),  b_1 = E_i(b_2)$, are as follows:	
\begin{lstlisting}
['0R0000', '0R2101', '0R1202', '0R2010', '0R1111', ...
\end{lstlisting}
Or, in a more readable way:
\begin{lstlisting}
[['000', 'R00'], ['020', 'R11'], ['010', 'R22'], ['021', 'R00'],
['011', 'R11'], ['001', 'R22'], ...
\end{lstlisting}

The correction of the normalization factor rules is computed during the normalization process based on the right sides of the normalization factor rules given in Definition \ref{factorrules}.

\subsection{Naive normalization algorithm}

Besides implementing the new algorithm, we also implemented a naive normalization algorithm in order to test and compare their results.

The naive normalization process is implemented in the public method \texttt{normalize} of the \texttt{NaiveNormalizer012} object. The naive algorithm normalizes the directive bi-sequence as anybody would:

First, it finds all prefixes $w_n$ obtained by a (finite) directive bi-sequence $\DT$. Then it takes the generalized pseudostandard word generated by $\DT$ and looks for pseudopalindromes among its prefixes. Then it checks whether the prefixes $w_n$ are all the pseudopalindromic prefixes or not.

While implementing the naive algorithm, several necessary functions were implemented. They can be also easily used independently, their names are self-explanatory:
\texttt{ is\_pal(seq)},
\texttt{is\_eipal(seq, i)},
\texttt{make\_pal\_closure(seq)},
\texttt{make\_eipal\_closure (seq, i)},
\texttt{make\_word012(delta, theta)}. We used those functions to find or test some of our theoretical results, especially those ones concerning the normalization process.

\section{Conclusion}
In this paper, we presented several new results. Let us summarize them and mention some problems that remain open.
\begin{enumerate}
\item We described how to recognize whether a directive bi-sequence of a ternary generalized pseudostandard word is normalized and we provided an algorithm for normalization.
\item An important part of this work consisted in implementation of the new normalization algorithm. Our implementation is available in a Python module with several other functions permitting to work with ternary generalized pseudostandard words.
\item The authors of \cite{pepa2} found a necessary and sufficient condition for the periodicity of ternary generalized pseudostandard words. Using the normalization algorithm, we plan to improve the result showing that knowledge of the normalized directive bi-sequence is not necessary to decide whether the generalized pseudostandard word is periodic or not.
\item Knowledge of the normalized form of any directive bi-sequence and thus of all pseudopalindromic prefixes of the corresponding ternary generalized pseudostandard word can be used, in the future, to derive more combinatorial properties of ternary generalized pseudostandard words, for instance to obtain some results on their factor complexity.
\end{enumerate}

\section*{Acknowledgements}
We would like to thank Štěpán Starosta for his useful comments and advice concerning in particular implementation of the normalization algorithm.

\noindent Funding: This work was supported by the grant $\text{CZ}.02.1.01/0.0/0.0/16\_019/0000778$.




\bibliographystyle{elsarticle-num}
%

\end{document}